\renewcommand{\ti}{{\times}}
\numberwithin{equation}{section}
\renewcommand{\showkeyslabelformat}[1]%
   {{\!\!{\color{blue}\tiny\sffamily#1}\!\!}}
\newcommand{\INFCONV}{\overset{\text{inf}}{\circ}}
\newcommand{\eff}{\mathrm{eff}}
\renewcommand*\env@cases[1][1.2]{%
  \let\@ifnextchar\new@ifnextchar
  \left\lbrace
  \def\arraystretch{#1}%
  \array{@{}c@{\quad}l@{}}%
}
\newtheorem{hypothesis}[theorem]{Hypothesis}
\newcommand{\cR}{\mathcal{R}}
\newcommand{\cE}{\mathcal{E}}
\newcommand{\dom}{\mathrm{dom}}
\newcommand{\up}{\uparrow}
\newcommand{\weakto}{\rightharpoonup}
\newcommand{\mfE}{\mathfrak{E}}
\newcommand{\AC}{\mathrm{AC}}
\newcommand{\gen}[2]{\gE(#1,#2)}
\newcommand{\en}[2]{\calE(#1,#2)}
\newcommand{\domE}{\mathrm{D}}
\newcommand{\pet}[2]{\partial_t\calE(#1,#2)}
\newcommand{\frsub}[2]{\partial \calE(#1,#2)}
\newcommand{\frsubopt}[3]{\partial^{\Spx {#1}} \calE(#2,#3)}
\newcommand{\frnameopt}[1]{\partial^{\Spx {#1}} \calE}
\newcommand{\pairing}[4]{ \sideset{_{ #1 }}{_{ #2 }}  {\mathop{\langle #3 , #4
\rangle}}}
\newcommand{\foraa}{\text{for a.a.}}
\newcommand{\tdis}[1]{\widetilde{\calR}_{#1}}
\newcommand{\subl}[1]{S_{#1}}
\newcommand{\gX}{\mathscr{X}}
\newcommand{\gE}{\mathscr{E}}
\newcommand{\gR}{\mathscr{R}}
\newcommand{\gD}{\mathscr{D}}
\newcommand{\pws}[2]{{#1}_{\kern-1pt#2}}
\newcommand{\pwM}[2]{\widetilde{#1}_{\kern-1pt#2}}
\newcommand{\piecewiseLinear}[2]{{\widehat{#1}_{\kern-1pt#2}}}
\newcommand{\pwl}{\piecewiseLinear}
\newcommand{\dis}[3]{{#1}_{#2}^{#3}}
\newcommand{\ttau}{|\bftau|}
\newcommand{\upiecewiseConstant}[2]{\underline{#1}_{\kern-1pt#2}}
\newcommand{\upwc}{\upiecewiseConstant}
\newcommand{\piecewiseConstant}[2]{\overline{#1}_{\kern-1pt#2}}
\newcommand{\pwc}{\piecewiseConstant}
\def\Xint#1{\mathchoice
{\XXint\displaystyle\textstyle{#1}}%
{\XXint\textstyle\scriptstyle{#1}}%
{\XXint\scriptstyle\scriptscriptstyle{#1}}%
{\XXint\scriptscriptstyle\scriptscriptstyle{#1}}%
\!\int}
\def\XXint#1#2#3{{\setbox0=\hbox{$#1{#2#3}{\int}$ }
\vcenter{\hbox{$#2#3$ }}\kern-.6\wd0}}
\def\dashint{\Xint-}
\newcommand{\sft}{\mathsf{t}}
\newcommand{\xforce}[2]{\TT^{(#2)}_{#1} \pws \xi{#1}}
\newcommand{\xforcel}{\bar{\bar{\xi}}} 
\newcommand{\xcur}[2]{\TT^{(#2)}_{#1} \pws U{#1}}
\newcommand{\mt}[2]{\mathsf{k}_{#1}(#2)}
\newcommand{\lint}[2]{\mathrm{I}_{\mathrm{left}}^{#1,#2}}
\newcommand{\rint}[2]{\mathrm{I}_{\mathrm{right}}^{#1,#2}}
\newcommand{\Argmin}{\mathop{\mathrm{Argmin}}}
\newcommand{\blockspace}{\mathbf{U}}
\newcommand{\Spx}[1]{\mathbf{X}_{#1}}
\newcommand{\Spxw}[1]{\mathbf{X}_{#1, \mathrm{w}}}
\newcommand{\Spy}{\mathbf{Y}}
\newcommand{\Spz}{\mathbf{Z}}
\newcommand{\Spgx}{\gX}
\newcommand{\Spgxw}{\Spgx_{\mathrm{w}}}
\newcommand{\Spxname}{\mathbf{X}}
\newcommand{\gnorm}[1]{\|#1\|}
\newcommand{\gnorms}[1]{\|#1\|_*}
\newcommand{\norm}[2]{\| #1\|_{#2}}
\newcommand{\norms}[2]{\| #1\|_{#2,*}}
\newcommand{\Rtrial}{\mathfrak{R}_*}
\newcommand{\TT}{\mathbb{T}}
\newcommand{\unipsi}{\Psi}
\newcommand{\recnote}[1]{{\footnotesize (#1)}}
\newcommand{\QYE}{\mathrm{QYE}}
\newcommand{\Vopt}[1]{V_{#1}}
\newcommand{\disblo}[1]{\calR_{\mathrm{#1}}}
\newcommand{\yvar}{y}
\newcommand{\zvar}{z}
\newcommand{\yvel}{v}
\newcommand{\zvel}{w}
\newcommand{\yfor}{\eta}
\newcommand{\zfor}{\zeta}
\newcommand{\thalf}{\dis t \bftau{k{-}1/2}}
\renewcommand*\env@cases[1][1.2]{%
   \let\@ifnextchar\new@ifnextchar
   \left\lbrace
   \def\arraystretch{#1}%
   \array{@{\,}c@{\ }l@{}}%
}
\newcommand{\blosub}[1]{\partial_{\mathrm{#1}}\calE}
\newcommand{\Ls}[2]{\mathrm{Ls}_{{#1}}^{\mathrm{weak}}\big(#2\big)}
\newcommand{\STEP}[1]{\noindent\underline{\emph{Step #1}}}
\begin{document}
\title{On time-splitting methods for gradient flows\\
        with two dissipation mechanisms\thanks{The research of AM was partially
          supported by Deutsche Forschungsgemeinschaft via SPP\,2256,
          subproject Mi\,459/9-1 (project no.\,441470105). RR has been partially supported by GNAMPA and by the 
          MIUR - PRIN project 2017TEXA3H ``Gradient flows, Optimal Transport and Metric Measure Structures".}}
\author{%
 Alexander Mielke\thanks{WIAS Berlin and Humboldt-Universit\"at zu Berlin, Germany, alexander.mielke@wias-berlin.de}, 
 Riccarda Rossi\thanks{DIMI, Universit\`a degli studi di Brescia, Italy, riccarda.rossi@unibs.it},\ and
 Artur Stephan\thanks{WIAS Berlin, Germany, artur.stephan@wias-berlin.de}}

\date{July 21, 2023}

\maketitle

\begin{abstract}
  We consider generalized gradient systems in Banach spaces whose evolutions
   are generated by the interplay between  an  energy functional
  and a dissipation potential. We  focus on  the case in which the dual
  dissipation potential is given by a sum of two functionals  and show
  that solutions of the associated gradient-flow evolution equation with
  combined dissipation can be constructed by a split-step method, i.e. by
  solving alternately the gradient systems featuring  only one of the
  dissipation potentials and concatenating the  corresponding 
  trajectories. Thereby the construction of solutions is provided either by
  semiflows, on the time-continuous level, or by using Alternating Minimizing
  Movements in the time-discrete setting. In both cases the convergence
  analysis relies on the energy-dissipation principle for gradient systems.
\end{abstract}

{\small\tableofcontents}


\section{Introduction}
\label{se:Intro}

This paper revolves around the application of time-splitting methods to
dissipative evolutionary processes that are generated by a generalized gradient
system $(\gX, \gE, \gR)$, which is a triple such that
\begin{enumerate}
\item the ambient space $(\gX,\gnorm{\cdot})$ is a (separable) reflexive Banach
  space;
\item the energy is a lower semicontinuous, time-dependent functional
  $\gE: [0,T]\ti \gX \to (-\infty,\infty]$, bounded from below  and has the proper
  domain   $[0,T] \ti \gD$; 
\item and the dissipation mechanisms are encoded by a convex lower
  semicontinuous dissipation potential $\gR: \gX \to [0,\infty)$. 
\end{enumerate}
In what follows, we will confine the discussion to the case in which both $\gR$
and its convex conjugate $\gR^*:\gX^* \to [0,\infty)$,
$\xi \mapsto \sup_{v\in \gX} (\langle \xi, v \rangle {-} \gR(v))$ have
superlinear growth at infinity.
We will refer to the triple $(\gX, \gE,
\gR)$ as a \emph{generalized} gradient system, because for true gradient
systems the dissipation potential $\gR:\gX\to
[0,\infty)$ has to be quadratic, leading to a Hilbert-space structure, see
\cite{Miel23IAGS}. Whenever convenient, we will alternatively write $(\gX, \gE,
\gR^*)$.  Throughout the paper, the dissipation potential
$\gR$ will be assumed \emph{state-independent}.

  Typically, in this class there fall processes whose evolution results from a
  balance between the two competing mechanism of  decrease of the energy
  $\gE$ and dissipation of energy according to
  $\gR$.  Thus, they are governed by the subdifferential inclusion
\begin{equation}
\label{genDNE}
\partial \gR(u'(t)) + \partial \gE(t,u(t)) \ni 0 \quad 
\text{in } \gX^* \ \ \foraa\  t \in (0,T).
\end{equation}
Indeed, \eqref{genDNE} is a balance law between frictional forces in the
(convex analysis) subdifferential $\partial\gR: \gX \rightrightarrows
\gX^*$ given by
\[
\partial\gR(v)= \{ \omega \in \gX^*\, : \ \gR(\hat{v})-\gR(v) \geq
\pairing{}{}{\omega}{\hat{v}{-}v} \quad \text{for all } \hat v \in \gX\} 
\]
(where $\langle \cdot, \cdot
\rangle$ denotes the duality pairing between $\gX^*$ and
$\gX$), and potential restoring forces in the Fr\'echet subdifferential
$\partial \gE: [0,T]\times \gX \rightrightarrows \gX^*$ of
$\gE$ with respect to its second variable: at a given $(t,u)\in [0,T]\ti
\gD$ we define $ \partial\gE(t,u)$ as the set of all $\xi \in \gX^*$ satisfying
\begin{equation}
\label{gdef-subfiff}
 \gE(t,w)-\gE(t,u) \geq \pairing{}{}{\xi}{w{-}u} {} + \mathrm{o}(\|w{-}u\|) 
  \quad \text{as } \gnorm{w{-}u}\to 0\,.
\end{equation}
Therefore, \eqref{genDNE} can be recast as
\begin{subequations}
\label{primal-dual}
\begin{equation}
\label{primal}
 -\xi(t) \in \partial \gR(u'(t)) \ \text{ in } \gX^* \quad\text{and} \quad   \xi(t) \in  \partial \gE(t,u(t))  \qquad  \foraa\  t \in (0,T). 
\end{equation}
From the thermodynamical point of view the
\emph{primal} dissipation potential $\gR$ has a prominent role in defining the
\emph{kinetic relation} $\eta \in \pl\gR(v)$ between the rate $v\in \gX$ and
friction force $\eta\in \gX^*$. By the Fenchel equivalence in convex analysis
the kinetic relation can be inverted as $ v \in \pl\gR^*(\eta)$, such that  it is 
also meaningful to reformulate \eqref{gdef-subfiff}  in the rate form 
\begin{equation}
\label{dual}
u'(t)  \in \partial \gR^*({-}\xi(t)) \  \text{ in } \gX \quad  \text{and} \quad
\xi(t) \in  \partial \gE(t,u(t))  \qquad  \foraa\  t \in (0,T),
\end{equation} 
\end{subequations}
which casts the \emph{dual} dissipation potential under the spotlight.  The
first existence results for evolution equations \eqref{primal},\eqref{dual}, in
the Hilbert space setting and for a quadratic dissipation, date back to the
late '60s \cite{Komura67, Crandall-Pazy69}; in particular, we refer to the
monograph \cite{Brez73OMMS}. Existence in general \emph{doubly nonlinear} case
has been first systematically tackled in  the seminal papers
\cite{ColliVisintin90, Colli92}. In the last three decades, the existence
theory has been extended to encompass \emph{nonsmooth} and \emph{nonconvex}
driving energies \cite{RossiSavare06, MRS2013}, based on the \emph{variational
  theory} for the analysis of gradient flows in metric spaces
\cite{Ambrosio95,AGS08, RMS08}.

Both dissipation potentials $\gR$ and $\gR^*$ feature in the
\emph{energy-dissipation balance}
 \begin{equation}
 \label{EDB-intro}
 \gen t{u(t)} + \int_s^t \left\{  \gR (u'(r)){+} \gR^*({-}\xi(r)) \right\}   \dd r  = \gen s{u(s)} +\int_s^t \partial_t \gen r{u(r)} \dd r  
\end{equation}
for all $0\leq s \leq t\leq
T$, which is equivalent to the primal and dual formulations under the validity
of a suitable chain-rule property for the gradient system
$(\gX,\gE,\gR)$. As a matter of fact, \eqref{EDB-intro} lies at the core
of our variational approach to gradient systems. Indeed, it turns out that a
pair $(u,\xi)$ fulfills \eqref{EDB-intro} (and thus \eqref{primal-dual}) if and only
if it satisfies the upper inequality
$\leq$, and this has paved the way for the usage of the toolbox from Calculus
of Variations in order to prove existence results for \eqref{primal-dual},
see  \cite{AGS08, RMS08, MRS2013}  and the references in the survey
\cite{Miel23IAGS}.

\subsection*{The time-splitting approach}
In this paper we focus on  the case in which the dual  potential $\gR^*$ is given
by the sum
\begin{equation}
\label{additive-struct}
\gR^* = \calR_1^* + \calR_2^* \text{ for two dissipation potentials } 
\calR_j: \Spx j \to [0,\infty),
\end{equation}
with $\Spx j$ (separable) reflexive spaces, `ordered' in such a way that
$\Spx 2 \subset \Spx 1$ continuously.   The energy functional
$\calE: [0,T]\times \Spx 2 \to (-\infty,\infty]$ is defined on the smaller
space and is extended to the larger space by $+\infty$.  Clearly, the
quintuple $(\Spx 1, \Spx 2, \calE, \calR_1, \calR_2)$ gives rise both to the
individual gradient systems $(\Spx 1, \calE, \calR_1)$ and
$(\Spx 2, \calE, \calR_2)$, and to the system
$(\Spx 1, \calE, \calR_1^*{+} \calR_2^*)$, whose dissipation mechanisms are
encoded by a `combination' of $\calR_1$ and $\calR_2$ and which is thus
governed by the subdifferential inclusion
\begin{equation}
  \label{dual-formulation-SPLIT-intro}
  u'(t)  \in \partial (\calR_1^*{+}\calR_2^*)({-}\xi(t)) \  \text{ in } \Spx 1 
  \quad  \text{and}\quad  \xi(t) \in  \partial \calE(t,u(t))  
  \qquad  \foraa\  t \in (0,T). 
\end{equation}
In order to construct solutions to \eqref{dual-formulation-SPLIT-intro}, it may
then be convenient  to use a  time-splitting approach, capable of
handling the different properties of the potentials $\calR_1$ and $\calR_2$.
Indeed, split-step methods with time step $\tau = T/N$ and $N\gg 1$  amount to
\begin{itemize} 
\item[(i)] solving on the semi-intervals of length $\tfrac12 \tau$,
  alternately, the single-dissipation gradient systems
  $(\Spx j,\calE,2\calR_j^*)$, $j\in \{1,2\}$ (i.e., with rescaled potentials
  $ \widetilde{\calR}_j (\cdot)= 2 \calR_j(\tfrac{1}2 \cdot) $);
\item[ (ii)] 
concatenating the solutions to obtain a trajectory $U_\tau:[0,T]\to \Spx 1$.
\end{itemize}
Then, and this is the main objective of the paper, the task in the convergence
analysis is to show that in the limit $N\to\infty$ the sequence of trajectories
$(U_\tau)_\tau$ converge to the solution $U$ of an \emph{effective
  gradient-flow equation}
\begin{equation}
\label{effective-dne}
  \pl\calR_\eff(U'(t)) + \pl\calE(t,U(t)) \ni 0 \qquad \foraa\  t \in (0,T). 
\end{equation}

For instance, in the case of reaction-diffusion systems, with this approach the
approximate solutions can be constructed by concatenating a solution obtained
in the diffusion step, by a method tailored to the linear parabolic structure,
and a solution arising from  a pure reaction step, which exploits the
distinct features of the nonlinear ODE.  Split-step methods for evolution
equations, where the right-hand side is given by a sum of two parts as in
\eqref{dual-formulation-SPLIT-intro}, have a long history starting with the
works of Lie and Trotter for linear evolution equations. Later
\cite{KatMas78TPFNS} provided a generalization where the evolution is given by
subdifferentials of convex functions in the spirit of Br\'ezis
\cite{Brez73OMMS}. In \cite{CleMaa11TPFG} convergence of split-step methods for
gradient flows in metric spaces as in \cite{AGS08} has been shown, in the case
the driving energy consists of two contributions with different properties.

In contrast, the functional setup considered in this paper is significantly
different from that usually addressed for time-splitting methods applied to
gradient systems,  because  we tackle the situation in which the
\emph{dissipation} consists of two parts.  Hence, we have  to account
for the two different geometries in the underlying space.

\subsection*{Our analysis}
In the time-splitting approach to the generalized gradient system
$(\Spx 1, \calE, \calR_1^*{+} \calR_2^*)$, on the one hand the
`concatenation step' may take place on the time-continuous level by
concatenating alternating solutions to the individual subdifferential
inclusions for $(\Spx j,\calE,2\calR_j^*)$, on intervals of length
$\frac12\tau$ to finally fill the whole interval $[0,T]$.  On the other hand,
one can work on the time-discrete level by using an \emph{Alternating
Minimizing Movement Scheme}.  We briefly illustrate  the latter 
procedure in the following lines and postpone a detailed analysis to Section\
\ref{s:AltMinMov}, where we prove our convergence result in Theorem
\ref{th:conv-split-step-MM}.  The time-splitting scheme on the time-continuous
level will be  carefully studied in Section\ \ref{se:TimeSplit},  see Theorem
\ref{th:conv-split-step}. 

Indeed, a commonly used procedure for constructing solutions to the
subdifferential inclusions for the single-dissipation gradient systems
$(\Spx j,\calE,\calR_j)$ is via \emph{Minimizing Movements}.  Adopting this
method in the context of the time-splitting approach means that for each
$j=1,2$ we solve the time-incremental minimization problems involving the
potentials $ \widetilde{\calR}_j (\cdot)= 2 \calR_j(\tfrac{1}2\,\cdot\,)$,
whose rescaling corresponds to the halved length $\tfrac12 \tau$ of the
discrete intervals. Approximate solutions are then defined by piecing together
these discrete solutions.

For illustrative reasons we present in the Introduction the time-splitting
approach for a uniform partition by intervals of equal length. The rigorous
analysis in the main text  allows for general partitions.
Hence, let $\tau=T/N$ define a uniform partition of the interval $[0,T]$ in
sub-intervals $((k{-}1)\tau, k\tau)$, $k\in\{1,\dots,N\}$, with midpoint
$(k{-}1/2)\tau$.  Starting from an initial datum $u_0 \in \mathrm{dom}(\calE)$,
we define the \emph{piecewise constant} time-discrete solutions
$\pwc U{\tau} :[0,T] \to \mathrm{dom}(\calE) \subset \Spx1$ via
$\pwc U{\tau}(0): = u_0=:U^2_0$ and, for $k=1,\ldots,N$, we set
\begin{align}
& 
\label{min-schemes-i-intro}
\pwc U{\tau}(t) : = U_k^1 \ \text{ for } t \in  ((k{-}1)\tau, (k{-}1/2)\tau], \quad  \pwc U{\tau}(t) : = U_k^2 \ \text{ for } t \in
((k{-}1/2)\tau, k\tau]  ,
\\ \nonumber
&\text{where } U_k^1  \in \Argmin_{U \in  \Spx 1} \left\{ \tfrac{\tau}2\, 
  \wt{\calR}_1 \big(  \tfrac2{\tau}
  ( U{-}U^2_{k-1} ) \big)  {+} \en {(k{-}1/2)\tau}U  \right\}, 
\\ \nonumber
&  \ \text{ \ and }U_k^2  \in \Argmin_{U \in  \Spx 2} \left\{ \tfrac{\tau}2 
  \,\wt{\calR}_2 \big(\tfrac2{\tau} ( U{-}  U^1_k ) \big) 
  {+} \en {k\tau}U \right\}\,,
\end{align}
cf.\ also \eqref{min-schemes-i} ahead. Further, we introduce the
piecewise linear function $\pwl U{\tau} :[0,T] \to \Spx1$
so obtained by affinely interpolating the values $\pwc U{\tau}(t)$ and
$\pwc U{\tau}(t{-}\tau/2)$ for $t$ in the  semi-intervals  $ [(k{-}1)\tau, (k{-}1/2)\tau] $ and
$  [(k{-}1/2)\tau, k\tau] $. We also consider the piecewise constant
interpolant $\pwc \xi \tau$ of the discrete forces
$\xi_k^1\in \partial^{\Spx 1} \calE((k{-}1/2)\tau, U_k^1)$ and
$\xi_k^2\in \partial^{\Spx 2} \calE(k\tau, U_k^2)$ (with
$\partial^{\Spx j}  \calE: \Spx j\rightrightarrows \Spx j^* $ the Fr\'echet
subdifferentials of $\calE(t,\cdot)$ with respect to the
$\pairing{}{\Spx j}{\cdot}{\cdot}$ pairings), that feature in the
Euler-Lagrange equations for the minimum problems \eqref{min-schemes-i-intro}.

Assume now that
$\calE: [0,T]\times \Spx 2\to (-\infty,\infty]$ is $\lambda$-convex in its
second variable, uniformly in $t\in [0,T]$, with respect to the coarser norm
$\|\cdot \|_{\Spx 1}, $ namely
 \[
 \begin{aligned}
 &
 \exists\, \lambda \in \R \ \forall\, t \in [0,T]  \ \forall\, u_0,u_1 \in \mathrm{dom}(\calE)  \ \forall\, \theta \in [0,1]\, : 
 \\
  & \qquad \calE(t,(1{-}\theta)u_0{+}\theta u_1) \leq (1{-}\theta) \calE(t,u_0)+ \theta \calE(t,u_1)
   - \frac\lambda 2 \theta(1{-}\theta) \|u_0{-}u_1\|_{\Spx 1}^2\,.
 \end{aligned}
 \]
Then, the functions $(\pwc U\tau, \pwl U\tau, \pwc \xi \tau)$  satisfy a discrete version of the energy-dissipation  upper estimate,  i.e.\
for every $0 \leq s \leq t \leq T$ we have
\begin{equation}
\label{discr-UEDE-intro}
\begin{aligned}
&
\en 
{\pwc {\mathsf{t}}\tau(t)}{\pwc U\tau(t)} + \mathcal{D}_{\tau}^{\mathrm{rate}} ([s,t]) +\mathcal{D}_{\tau}^{\mathrm{slope}} ([s,t]) 
\\
 & 
 \leq 
\en {\pwc {\mathsf{t}}\tau(s)}{\pwc U\tau(s)} +\int_s^t \partial_t 
\en {\pwc {\mathsf{t}}\tau(r)}{\pwc U\tau\left(r{-}\tfrac\tau2\right)} \dd r  + \mathrm{Rem}_{\tau}([s,t])\,,
\end{aligned}
\end{equation}
where $\pwc {\mathsf{t}}\tau: [0,T]\to [0,T]$ denotes the piecewise constant
interpolant of the notes $(k\tau)_{k=1}^N$ of the partition.  Here, the rate
contribution $ \mathcal{D}_{\tau}^{\mathrm{rate}}$ incorporates the primal
dissipation potential depending on the rate $\pwl U{\tau}'$ featuring in the
discrete energy-dissipation balances for the individual systems
$(\Spx j,\calE,2\calR_j^*)$, namely
\begin{subequations}
\begin{equation}
\label{rate-term-INTRO}
\mathcal{D}_{\tau}^{\mathrm{rate}} ([s,t]) := 
 2  \int_{s}^t
\left\{
\chi_{\tau} (r) \calR_1 \left(\tfrac12 \pwl U{\tau}'(r)\right) {+} (1{-}\chi_{\tau}(r) ) \calR_2 \left(\tfrac12 \pwl U{\tau}'(r)\right) \right\} \dd r,
\end{equation}
where $\chi_\tau:[0,T]\to\{0,1\}$ is the characteristic function of the union
of the  left semi-intervals  $((k{-}1)\tau, (k{-}1/2)\tau]$.
Accordingly, the slope contribution $ \mathcal{D}_{\tau}^{\mathrm{slope}}$
features the dual dissipation potentials evaluated at the force
$\pwc \xi{\tau} (t) \in \partial \calE(\pwc {\mathsf{t}}\tau(t), \pwc
U\tau(t))$, i.e.\
\begin{equation}
\label{slope-term-INTRO}
\mathcal{D}_{\tau}^{\mathrm{slope}} ([s,t]) : =   2  \int_{s}^t
\left\{ \chi_{\tau}(r) \calR_1^* ({-} \pwc \xi{\tau} (r))  {+} (1{-}\chi_{\tau}(r) ) \calR_2^* ({-}\pwc \xi{\tau} (r))  \right\}  \dd r \,.
\end{equation}
\end{subequations}
  The previously required  $\lambda$-convexity of the energy $\calE(t,\cdot)$  plays a key role in the estimate of  the remainder term  via
\[
\begin{aligned}
 \mathrm{Rem}_{\tau}([s,t])  & : = \frac1\tau \int_s^t \left(  
  \en {\pwc {\mathsf{t}}\tau(r)}{\pwc U\tau(r)}{-}
 \en {\pwc {\mathsf{t}}\tau(r)}{\pwc U\tau(r{-}\tfrac\tau2)} 
 {-} \pairing{}{\Spx 1}{\pwc\xi \tau(r)}{\pwc U\tau(r){-}
     \pwc U\tau(r{-}\tfrac\tau2)}\right)  \dd r
 \\
 &
 \ \leq \ \frac\lambda2 \int_s^t \|  \pwl U{\tau}'(r) \|_{\Spx 1} \, \| \pwc
 U\tau(r){-} \pwc U\tau(r{-}\tfrac\tau2) \|_{\Spx 1} \dd r\,. 
 \end{aligned}
\]
This estimate ensures that $ \lim_{\tau\to 0}\mathrm{Rem}_{\tau}([s,t])=0$.
However, we emphasize that $\lambda$-convexity of $\calE(t,\cdot)$ is not
necessary for our analysis and will not be used elsewhere in the paper.  It is
assumed here, only, in order to illustrate our results for the gradient system
$(\Spx 1, \calE, \calR_1^*{+}\calR_2^*)$ in simple and self-contained a way.
As we will see in Section\ \ref{s:AltMinMov}, this additional convexity
condition can be avoided by a more careful handling of the discrete estimates
via the \emph{variational interpolant} of the discrete solutions.

Taking the limit in \eqref{discr-UEDE-intro} leads to an upper
energy-dissipation inequality that, assuming the validity of a suitable chain
rule property for the energy $\calE$, is in fact equivalent to the
corresponding energy-dissipation balance and yields a solution to the
generalized gradient system $(\Spx 1, \calE, \calR_1^*{+}\calR_2^*)$. This is
summarized in the following result, anticipating Theorem
\ref{th:conv-split-step-MM} ahead. In the statement below  we do not
detail all  technical  assumptions on the quintuple
$(\Spx 1, \Spx 2, \calE, \calR_1, \calR_2)$, but  we highlight the
crucial, additional requirement that the Fr\'echet subdifferential is a
singleton, called \emph{singleton condition} subsequently.

The counterexample constructed in
Section\ \ref{ss:3.2} shows that convergence of the time-splitting scheme to a
solution of \eqref{dual-formulation-SPLIT-intro} may in fact be \emph{false},
if the singleton condition \eqref{singleton-intro} is not assumed, even in
the simple case $\Spx1=\Spx2=\R^2$. \medskip

\begin{nonotheorem}\label{th:Intro}\itshape
Under suitable conditions on $(\Spx 1, \Spx 2, \calE, \calR_1, \calR_2)$,
suppose also that
\begin{equation}
 \label{singleton-intro}
 \partial^{\Spx 1} \calE(t,u) = \partial^{\Spx 2} \calE(t,u)  \text{ is a
   singleton for all } (t,u) \in \mathrm{dom}(\calE)\,. 
\end{equation}
Then, for any null sequence $\tau\to0$ the curves $(\pwc U{\tau})$,
$(\pwl U{\tau})$, and $(\pwc \xi{\tau})$ suitably converge to a pair $(U,\xi)$
 solving the subdifferential inclusion \eqref{dual-formulation-SPLIT-intro}
and fulfilling the energy-dissipation balance
\begin{equation}
  \label{EDB-effect-INTRO}
  \en t{U(t)} + \int_s^t \left(  \calR_\eff (U'(r)){+} (\calR_1^*{+}
    \calR_2^*)({-}\xi(r)) \right)   \dd r  = \en s{U(s)} 
  + \int_s^t \pet r{U(r)}\dd r   
\end{equation}
for every $ 0\leq s \leq t \leq T$, where $\calR_\eff : \Spx 1\to [0,\infty)$
is the primal dissipation potential corresponding to $(\calR_1^*{+}\calR_2^*)$,
namely the $\inf$-convolution of $\calR_1$ and $\calR_2$
\[
 \calR_\eff(v) : = \inf_{v_1,\, v_2 \in \Spx 1, \ v = v_1+v_2 } \left(
   \calR_1(v_1)+\calR_2(v_2) \right)\,. 
\]
\end{nonotheorem}

Without entering into the details of the proof, we now motivate how
$\calR_\eff$ naturally arises in the passage to the limit in the rate term
from \eqref{rate-term-INTRO}. For simplicity, and with no loss
in generality, we illustrate this when $[s,t]=[0,T]$. Then,  for $\tau>0$
we have 
\[
\begin{aligned}
\mathcal{D}_{\tau}^{\mathrm{rate}} ([0,T])  &
= \sum_{k=1}^{N}  \left\{  \int_{(k{-}1)\tau}^{(k{-}1/2)\tau}  \! 
  2\calR_1(\tfrac12 \pwl U{\tau}'(r))   \dd r + 
 \int_{(k{-}1/2)\tau}^{k\tau}  \! 2\calR_2(\tfrac12 
   \pwl U{\tau}'(r))  \dd r   \right\} 
 \\
 &
 \overset{(1)}{=} \sum_{k=1}^{N} \tau  \left\{ \dashint_{(k{-}1)\tau}^{(k{-}1/2)\tau} 
\! \calR_1(\tfrac12 \pwl U{{\tau}}'(r))  \dd r +
 \dashint_{(k{-}1/2)\tau}^{k\tau}  \! \calR_2 
   (\tfrac12 \pwl U{\tau}'(r)) \dd r    \right\}
 \\
 & 
  \stackrel{(2)}{\geq} \sum_{k=1}^{N} \tau \left\{ \calR_1
    \!\left(\dashint_{(k{-}1)\tau}^{(k{-}1/2)\tau}  \!   
    \tfrac12 \pwl U{{\tau}}'(r))   \dd r \right)  +
 \calR_2 \!\left(\dashint_{(k{-}1/2)\tau}^{k\tau} \!  
  \tfrac12  \pwl U{{\tau}}'(r) \dd r   \right)   \right\}
\\
& 
 \stackrel{(3)}{\geq}   \sum_{k=1}^{N}  {\tau}  \: \calR_{\eff} \left( 
 \frac1{\tau}  \int_{(k{-}1)\tau}^{k\tau}  \bbU'_{{\tau}}(r)   \dd r\right)
   = \int_0^T {\calR}_{\eff} \big(\, \bbU'_\tau (r)) \dd r, 
 \end{aligned}
\]
 where $\bbU:[0,T]\to \Spx1$ denotes the piecewise affine interpolant with
$\bbU_\tau(k\tau)= \pws U \tau(k\tau)$ for $k\in \{0,\ldots, N\}$. In the above
calculation,  on the right-hand side of {\footnotesize (1)} the symbol
$\dashint$ denotes the integral average, for {\footnotesize (2)} we have used
 convexity of $\calR_j$ and Jensen's inequality, while {\footnotesize
  (3)} follows from the definition of $ {\calR}_{\eff}$.  Taking the limit
$\tau \to 0$ we can use $\bbU'_\tau \weakto U'$ in $\rmL^1(0,T;\Spx 1)$, and  
obtain the first liminf estimate: 
\[
\liminf_{\tau \to 0} \mathcal{D}_{\tau}^{\mathrm{rate}} ([0,T])  \geq \int_0^T
\calR_\eff\big(U'(t)\big) \dd t \,.
\] 

 Likewise, the role of the singleton condition can be understood by a
perusal of the argument for taking the limit in the slope term.  For this
we introduce a key tool  for our analysis: the
\emph{repetition operators} $\TT^{(1)}_{\tau} $ and $\TT^{(2)}_{\tau} $
that, applied to a given function $\zeta: [0,T]\to \Spx j^*$, are defined
via 
\[
  \begin{aligned}
   & (\TT^{(1)}_{\tau} \zeta)(t): = \left\{
    \begin{array}{cll}
      \zeta(t) & \text{if } t \in ((k{-}1)\tau, (k{-}1/2)\tau] 
      & \text{for  } k =1,\ldots,N,
      \\
      \zeta (t{-}\tfrac{\tau} 2)  & \text{if } t \in  ((k{-}1/2)\tau,
      k\tau] &  \text{for  } k =1,\ldots,N,
    \end{array}
  \right.
  \\
  & (\TT^{(2)}_{\tau} \zeta)(t): = \left\{
   \begin{array}{cll}
    \zeta(t{+}\tfrac{\tau} 2) & \text{if } t \in ((k{-}1)\tau,
    (k{-}1/2)\tau]  &  \text{for  } k =1,\ldots,N,
    \\
    \zeta (t)  & \text{if } t \in  ((k{-}1/2)\tau, k\tau]  
          & \text{for } k =1,\ldots,N.
   \end{array}
  \right.
 \end{aligned}
\]
Thus, $ \TT^{(1)}_{\tau}$ replicates, on the  right semi-intervals 
$ ((k{-}1/2)\tau, k\tau]$, the restriction of $\zeta$ to the preceding 
left semi-intervals  $ ((k{-}1/2)\tau, k\tau]$, while $ \TT^{(2)}_{\tau} $
does the converse. 
 A crucial property of the repetition operators $\TT^{(j)}$ is seen when
calculating the slope part of the dissipation integral, namely (see
\eqref{rephrasing-via-repet} for more details)  
\begin{align*}
 \mathcal{D}_{\tau}^{\mathrm{slope}} ([0,T])   &  =
\int_0^T \Big\{ \chi_\tau(r) \, 2\cR_1\big({-}\pwc\xi\tau(r)\big)
 + \big(1{-} \chi_\tau(r)\big) \,2\cR_2\big({-}\pwc\xi\tau(r) \big) 
\Big\} \dd r 
\\
&=
   \int_0^T \Big\{  \calR_1^* \big({-} \TT^{(1)}_{\tau} \pwc \xi{\tau}(r) \big)
 {+}  \calR_2^* \big({-}\TT^{(2)}_{\tau}  \pwc \xi{\tau}  (r) \big) 
 \Big\} \dd r \,.
\end{align*}
 In the first line the
integrand on the right-hand side is a sum of two products, where each factor
only weakly converges for $\tau\to 0$; thus convergence is not clear. 
However, in the second line, where the
repetition operators appear, we only have one weakly converging sequence in
each term. Moreover, the  above expression for
$ \mathcal{D}_{\tau}^{\mathrm{slope}}$ well motivates the crucial role of the
singleton condition \eqref{singleton-intro}. In fact,  the two sequences
$( \TT^{(j)}_{\tau} \pwc \xi{\tau})_\tau$  can be shown to converge to
limits $\xi_j $ that fulfill $\xi_j(t) \in \partial^{\Spx j} \calE(t,U(t))$ for
almost all $t\in (0,T)$, provided a suitable closedness property for the
subdifferentials $ \partial^{\Spx j} \calE$ is assumed. Then, condition
\eqref{singleton-intro} guarantees that $ \partial^{\Spx j} \calE(t,U(t))$ is a
singleton of $\Spx j^*$, so that
$ \partial^{\Spx 1} \calE(t,U(t)) = \{ \xi_1(t) \} = \{ \xi_2(t) \}
=\partial^{\Spx 2} \calE(t,U(t)) $ for a.a.\ $t\in (0,T)$. Then,
$ \xi := \xi_1=\xi_2 : (0,T) \to \Spx 1^*$ gives rise to the force term in
\eqref{EDB-effect-INTRO} since
\[
\liminf_{\tau \to 0}  \mathcal{D}_{\tau}^{\mathrm{slope}} ([0,T]) \geq  
 \int_0^T \!\! \big\{  \calR_1^*({-} \xi(r)) {+}  \calR_2^*({-}\xi(r)) 
\big\} \dd r  =  { \int_0^T \! \cR_\eff^*( {-} \xi(r) )  \dd r \,.  }
\]

\subsection*{Splitting schemes for block structures}
In the second part of the paper we tackle the application of the splitting
method to generalized gradient systems with a \emph{block structure}. In such
systems,
\[
\text{the state variable $u$ is a vector } u= (y,z)^\top 
\in \blockspace: = \Spy \ti \Spz,
\]
with $\Spy$ and $\Spz$ (separable) reflexive Banach spaces.  The evolution of
the system is governed by a driving energy functional
$ \calE: [0,T]\ti \blockspace \to (-\infty, \infty]$ and by two dissipation
potentials $\disblo y : \Spy \to [0,\infty)$ and
$ \disblo z: \Spz \to [0,\infty)$, each acting on the components of the
rate vector  $u'= (y',z')^\top$, namely
\[
\calR(u')=\calR(y',z')=\big(\disblo y{\otimes} \disblo z\big)(y',z') :=
\disblo y(y')+ \disblo z(z')\,. 
\]

 Models in solid mechanics described by the evolution of the displacement,
or the deformation, of the body, coupled with that of an internal variable
describing  inelastic processes such as, e.g., plasticity, heat transfer,
delamination, fracture, damage, typically fit into this framework.  Applying
the splitting method to this context boils down to letting first the variable
$y$ evolve  on a semi-interval  while keeping $z$ fixed, and then
letting $z$ evolve with $y$ fixed  on the next semi-interval.  On the
discrete level, this can be compared with staggered minimization schemes, 
which have been recently used for gradient flows and rate-independent systems,
cf.\ e.g.\ \cite{Roub10TRIP, RoubThoPana15, KneNeg17CAMS, ACFS17} among
others. 
\par
The analysis of this kind of systems can be framed 
 in the context of our splitting approach by  introducing the dissipation potentials 
$\calR_j: \blockspace \to [0,\infty]$ 
\begin{equation}
\label{eq:I.dissip-potent-Rj}
\begin{aligned}
\calR_1(u') =  \calR_1(\yvel,\zvel)=\disblo y(\yvel) + \mathcal{I}_{\{0\}}(\zvel),  \qquad  \calR_2(u') =  \calR_2(\yvel,\zvel)=\calI_{\{0\}}(\yvel) + \disblo z(\zvel),
\end{aligned}
\end{equation}
%
where $\calI_{\{0\}}$ is the indicator function of the singleton $\{0\}$, with
$\calI_{\{0\}}(0)=0$ and $\infty$ otherwise.  We then concatenate the
(time-discrete or time-continuous) solutions to the subdifferential inclusions
\[
\partial \calR_j(u'(t)) + \partial^{\blockspace} \calE(t,u(t))  \ni 0 \quad 
\text{in } \blockspace^* \qquad \foraa\  t \in (0,T)
\]
governing the single-dissipation systems $(\blockspace,\calE, \calR_j)$.
Because of \eqref{eq:I.dissip-potent-Rj}, in each step we either freeze the
variable $z$ (for $j{=}1$), or the variable $y$ (for $j{=}2$).  In particular, on
the time-discrete level the corresponding Alternating Minimizing Movement
scheme consists of two consecutive minimum problems in which either the
variable $z$ stays fixed and minimization only involves the variable $y$, or
$y$ is fixed and we minimize only with respect to $z$.  Nonetheless, let us
stress that the energy functional $\calE(t,\cdot)$ is defined on the
product space $\blockspace$, and likewise the Fr\'echet subdifferential
$\partial^{\blockspace} \calE$ involves the
$\pairing{}{\blockspace}{\cdot}{\cdot}$ duality. 

 The analysis in Sections \ref{s:true2} to \ref{s:AltMinMov} relies on the
ordering assumption that $\Spx2\subset \Spx1$ continuously and that $\calR_j$
and $\calR_j^*$ are superlinear on $\Spx j$. This assumption is no longer
applicable in the case with block structure, hence Section \ref{s:block} shows
how the analysis can be adapted; in particular the singleton condition
\eqref{singleton-intro} can be replaced by the weaker \emph{cross-product
  condition}  
\[
 \partial^{\blockspace} \calE(t,y,z) =\blosub y (t,y,z) \ti 
 \blosub z (t,y,z)  \qquad \text{for all } 
  (t,y,z) \in [0,T]\times \blockspace\,.
\]
Under this and other conditions on the  generalized gradient system
$(\blockspace, \calE, \disblo y{\otimes}\disblo z)$  we prove our two convergence
results, Theorem \ref{thm:sec8} for the concatenation of time-continuous
solutions, and Theorem\ \ref{thm:sec8-MM} for  the Alternating Minimizing
Movement scheme.  
\medskip

\noindent
\paragraph{\bf Plan of the paper.} Section \ref{s:true2} lays down the
foundations for our analysis: after recalling some known facts about the
energy-dissipation balance and its role in characterizing solutions for an
abstract gradient system $(\gX,\gE, \gR)$ in Section\ \ref{s:prelims}, in
Section\ \ref{s:2} we  specify our working assumptions on the quintuple
$(\Spx 1,\Spx 2, \calE, \calR_1, \calR_2)$ and expound some of their
consequences.

We devise the time-splitting scheme on the time-continuous level in Section\
\ref{se:TimeSplit}, which also contains the statement of our first main convergence
result in Theorem \ref{th:conv-split-step},  proved throughout Section
\ref{s:4}. Section \ref{s:examples} is instead devoted to examples illustrating
the theory: a counterexample to convergence of the time-splitting scheme
without the singleton condition, and a doubly-nonlinear PDE example.  In
Section \ref{s:AltMinMov} we show that the analysis carried out in Sections
\ref{se:TimeSplit} and \ref{s:4} can be easily adapted to the Alternating Minimizing
Movement scheme introduced in \eqref{min-schemes-i-intro} and state our second
convergence result in Theorem \ref{th:conv-split-step-MM}, which is the
discrete counterpart to Theorem \ref{th:conv-split-step}.  In Section
\ref{s:block} we turn to systems with block structure and provide our last two
convergence results in Theorems \ref{thm:sec8} and \ref{thm:sec8-MM}. Finally,
Appendices \ref{s:QYE} and \ref{s:appB} contain some auxiliary results.

\section{Setup and assumptions}
\label{s:true2}

In the ensuing Section\ \ref{s:prelims} we are going to collect some key facts
about general gradient systems $(\gX, \gE, \gR)$ that will be used throughout
the paper.  Then, in Section\ \ref{s:2} we will move to the context of two
dissipation mechanisms and settle our working assumptions on the gradient
systems $(\Spx j, \calE, \calR_j)$, $j \in \{1,2\}$.

\subsection{Preliminaries on gradient systems}
\label{s:prelims}

By \emph{generalized gradient system} we mean a triple $(\gX, \gE, \gR)$
satisfying the conditions \textbf{(1), (2), (3)} explained in the
Introduction. More precisely, throughout this section, we will assume that
$\gE$ and $\gR$ satisfy the following basic conditions:
\begin{itemize}
\item[$\mathbf{<E>}$ {\bf (time differentiability and power control):}] {\sl
    The energy $\gE: [0,T]\ti \gX \to (-\infty,\infty]$ is a lower
    semicontinuous functional, bounded from below by a positive constant, and
    has a proper domain  $ [0,T]\ti \gD$ with $\gD\subset \gX$. 
    Moreover, for every $u\in \gD$ the function $t\mapsto \gE(t,u)$ is
    differentiable and}
\begin{equation}
\label{gpower-control}
\exists\, C_{\#}>0 \ \ \forall\, (t,u) \in [0,T]\ti \gD: \quad 
  |\partial_t \gE(t,u)| \leq C_{\#} \gE(t,u)\; .
\end{equation}
\item[$\mathbf{<R>}$ {\bf (superlinear dissipation potential):}] {\sl The
     potential $\gR: \gX \to [0,\infty)$ is lower semicontinuous and
    convex, $\gR(0)=0$ and, together with its convex conjugate
    $\gR^*:(\gX^*,\|\cdot\|_*) \to [0,\infty)$, the functional $\gR$  is superlinear:}
\begin{equation}
\label{gsuperlinear-growth}
\lim_{\gnorm{v}\up \infty} \frac{\gR(v)}{\gnorm{v}} = \lim_{\gnorms{\xi}\up
  \infty} \frac{\gR^*(\xi)}{\gnorms{\xi}} =\infty\,. 
\end{equation}
\end{itemize}
In particular, we emphasize that we will confine our study to the case of
\emph{state-independent} dissipation potentials, although we believe that all
of our results could be extended to potentials $\gR=\gR(u,v)$ with a suitably
tamed dependence on the state variable $u$. Instead, our analysis cannot
encompass potentials taking the value $\infty$ (for instance, including
indicator terms that force unidirectional evolution), since the treatment of
the corresponding gradient systems necessitates additional estimates that are
outside the scope of the present paper.  
\par
The central result of this section is Proposition \ref{p:EDP}, which is
indeed at the core of our approach to gradient systems. It provides a
characterization of the gradient-system evolution given by the subdifferential
inclusion \eqref{genDNE}, in terms of the so-called 
Energy-Dissipation Principle, combined with the chain rule for the energy
functional $\gE$. The following characterization of \eqref{genDNE} via an upper
energy estimate  has been circulating for some time: it is in fact
underlying analysis of gradient flows and \emph{generalized} gradient flows in
Banach and metric spaces, cf.\ e.g.\ \cite{Ambrosio95,AGS08, RMS08, MRS2013, Miel23IAGS}.
Nonetheless, in order to make the paper self-contained
we detail the proof of the following result here as well.

\begin{proposition}[Energy-dissipation principle]
\label{p:EDP}
Let $u \in \AC([0,T];\gX )$ and $\xi \in \rmL^1 ([0,T];\gX^* )$ with
$\xi(t) \in \partial \gE(t,u(t))$ for almost all $t\in (0,T)$.  Suppose that
the map $t\mapsto \gE(t,u(t))$ is absolutely continuous on $[0,T]$ and that for
$(u,\xi)$ the chain rule
\begin{equation}
  \label{eq:48strong}
  \frac \dd{\dd t}  \gE(t,u(t)) - \partial_t  \gE(t,u(t))
  =   \pairing{}{}{\xi(t)}{u'(t)}  \quad 
  \text{for a.a.\ }t\in (0,T)
\end{equation}
holds. Then, the following conditions are equivalent:
\begin{enumerate}
\item[\upshape (C1)] The pair $(u,\xi)$ complies with the upper energy-dissipation
  estimate, i.e.
  \begin{align}
    \label{EDB-UE}
    \hspace*{-1em} \gen T{u(T)} + \int_0^T \!\! \big\{  \gR(u'(r)){+} \gR^*({-}\xi(r)) \big\} \dd r  
    \leq \gen 0{u(0)} +\int_0^T \!\! \partial_t \gen r{u(r)} \dd r \,.
  \end{align}
\item[\upshape (C2)] The pair $(u,\xi)$ solves 
  \begin{equation}
    \label{solving-GFE}
    - \xi(t) \in  \partial \gR (u'(t)) 
    \qquad \text{for a.a.\ $t\in (0,T)$,}
  \end{equation} 
  and fulfills  the energy-dissipation balance
  \begin{equation}
    \label{gEDB-effect}
    \gen t{u(t)} + \int_s^t \!\! \big\{  \gR (u'(r)){+} \gR^*({-}\xi(r)) \big\} 
    \dd r  = \gen s{u(s)} +\int_s^t \!\! \partial_t \gen r{u(r)} \dd r  
  \end{equation}
  for all $s,t \in [0,T]$ with  $s\leq t$. \vspace{-1em}
\end{enumerate}
\end{proposition}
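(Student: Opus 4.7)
The plan is to prove (C2)$\Rightarrow$(C1) by trivial inclusion, and (C1)$\Rightarrow$(C2) by exploiting the Fenchel--Young inequality combined with the chain rule \eqref{eq:48strong} to turn the integral inequality into a pointwise equality.

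For the easy direction (C2)$\Rightarrow$(C1), the energy-dissipation balance \eqref{gEDB-effect} evaluated at $s=0$, $t=T$ gives exactly \eqref{EDB-UE} with equality, in particular with $\leq$; so nothing more is needed.

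For (C1)$\Rightarrow$(C2), I would first use the chain rule \eqref{eq:48strong}, integrated on $[0,T]$, to rewrite
\[
  \int_0^T \pet r{u(r)}\dd r
  = \gen T{u(T)}-\gen 0{u(0)} - \int_0^T \pairing{}{}{\xi(r)}{u'(r)}\dd r.
\]
Plugging this into \eqref{EDB-UE} and cancelling the boundary terms yields
\[
  \int_0^T \bigl\{\gR(u'(r)) + \gR^*({-}\xi(r)) + \pairing{}{}{\xi(r)}{u'(r)}\bigr\}\dd r \leq 0.
\]
The Fenchel--Young inequality, applied pointwise to $v=u'(r)$ and $-\xi(r)\in\gX^*$, says that the integrand is nonnegative almost everywhere. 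Hence it must vanish for a.a.\ $t\in(0,T)$, which is precisely the equality case in Fenchel--Young and therefore equivalent to
\[
  -\xi(t) \in \partial\gR(u'(t)) \qquad \text{for a.a.\ }t\in(0,T),
\]
yielding \eqref{solving-GFE}. In particular, $\gR(u'(r))+\gR^*({-}\xi(r)) = -\pairing{}{}{\xi(r)}{u'(r)}$ pointwise a.e.

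To obtain the full balance \eqref{gEDB-effect} on an arbitrary subinterval $[s,t]\subset[0,T]$, I would integrate this pointwise identity over $[s,t]$ and re-apply the chain rule \eqref{eq:48strong} (which, being pointwise, holds on every subinterval under the standing assumption that $r\mapsto\gE(r,u(r))$ is absolutely continuous), obtaining
\[
  \int_s^t \bigl\{\gR(u'(r)) + \gR^*({-}\xi(r))\bigr\}\dd r
  = \gen s{u(s)} - \gen t{u(t)} + \int_s^t \pet r{u(r)}\dd r,
\]
which is exactly \eqref{gEDB-effect}.

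The only delicate point I foresee is ensuring integrability of $r\mapsto\pairing{}{}{\xi(r)}{u'(r)}$, so that the rewriting in the first display is well-defined; but this is guaranteed by $u\in\mathrm{AC}([0,T];\gX)$, $\xi\in\rmL^1([0,T];\gX^*)$, and the chain rule \eqref{eq:48strong} itself, which together with the absolute continuity of $t\mapsto\gen t{u(t)}$ forces $\pairing{}{}{\xi}{u'}\in\rmL^1(0,T)$. No other obstacle is expected; the argument is fully driven by Fenchel--Young plus the chain-rule identity.
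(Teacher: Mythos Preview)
The proposal is correct and follows essentially the same approach as the paper: both use the chain rule to convert the right-hand side of \eqref{EDB-UE} into $\int_0^T\langle -\xi,u'\rangle\,\dd r$, invoke the Fenchel--Young inequality to force pointwise equality (hence \eqref{solving-GFE}), and then integrate the resulting pointwise identity over $[s,t]$ together with the chain rule to recover \eqref{gEDB-effect}. Your added remark on the $\rmL^1$-integrability of $\langle\xi,u'\rangle$ is a welcome clarification but introduces nothing new beyond what the paper's proof implicitly uses.
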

\begin{proof}  Obviously, (C2) implies (C1). 

To show that condition (C1) implies (C2),  we apply the chain rule
\eqref{eq:48strong} and deduce from the energy-dissipation upper estimate
\eqref{EDB-UE} that
\[
\begin{aligned}
  \int_0^T \!\!\left\{ \gR (u'(r)){+} \gR^*({-}\xi(r)) \right\} \dd r & \leq
  \gen 0{u(0)} - \gen T{u(T)} +\int_0^T \!\! \partial_t \gen r{u(r)} \dd r
  \\
  & = \int_0^T \!\! \pairing{}{}{ {-}\xi(r)}{ u'(r) } \dd r\,.
\end{aligned}
\]
Now, since $\gR (v) + \gR^*(\zeta) \geq \pairing{}{\Spx {}}{\zeta} v $ for all
$(v,\zeta) \in \gX \ti \gX^*$, from the above inequality we immediately infer
that
\begin{equation}
\label{gFenchel-Moreau-equality}
 \gR (u'(t)) +  \gR^*({-}\xi(t))  = \pairing{}{}{ {-}\xi(t)}{ u'(t)}   
   \qquad \foraa\  t \in (0,T)
\end{equation}
which, by a well-known convex analysis result, is equivalent to the inclusion
\eqref{solving-GFE}.  The energy-dissipation balance \eqref{gEDB-effect}
follows by integrating \eqref{gFenchel-Moreau-equality} on an arbitrary time
interval $[s,t]\subset [0,T]$ and again applying the chain rule
\eqref{eq:48strong}.  This concludes the proof.
\end{proof}

\subsubsection*{The chain rule and the \emph{Quantitative Young Estimate}}

The cornerstone in the proof of Proposition \ref{p:EDP} is indeed the chain
rule formula \eqref{eq:48strong}.  Let us now gain further insight on its
validity: In view of estimate \eqref{gpower-control}, for any curve
$u\in \AC([0,T];\gX)$ along which $t\mapsto \gen t{u(t)}$ is absolutely
continuous we even have that $t\mapsto \partial_t \gen t{u(t)}$ is in
$\rmL^\infty([0,T])$. Therefore, underlying \eqref{eq:48strong} is the fact
that the function $t\mapsto \pairing{}{}{\xi(t)}{u'(t)} \in
\rmL^1([0,T])$. That is why, the chain rule condition is normally required
along curves $(u,\xi)$ for which the second estimate in \eqref{first-part-CR}
below is valid. Namely, the chain rule hypothesis for $\gE$ is typically
formulated in this way:
\begin{itemize}
\item[$\mathbf{<CR>}$ \textbf{(chain rule):}] {\sl the pair
    $(\gE,\partial \gE)$ satisfies the following property: for any
    $(u,\xi) \in \AC([0,T];\gX)\ti \rmL^1([0,T];\gX^*)$ such that
    $\xi(t) \in \partial \gE(t,u(t))$ for almost all $t\in (0,T)$ and}
\begin{equation}
\label{first-part-CR}
\sup_{t\in [0,T]} |\gen t{u(t)}|<\infty \text{ and } \int_0^T 
\gnorms{\xi(t)} \, \gnorm{u'(t)} \dd t <\infty \, ,
\end{equation}
{\sl  the function $t \mapsto \gen t{u(t)}$ is absolutely continuous and
   the chain rule \eqref{eq:48strong} holds.}
\end{itemize}

The \emph{Quantitative Young Estimate} $\QYE$ which we introduce below is the
condition on the dissipation potential $\gR$ that enables us to apply the chain
rule $\mathbf{<CR>}$ to pairs $(u,\xi)$ satisfying the energy-dissipation upper
estimate \eqref{EDB-UE}. It strengthens the \emph{Young estimate}
  \[
 \gR(v) +\gR^*(\xi) \geq\pairing{}{X}{\xi}v \qquad \text{for all } (v,\xi) \in \Spxname\ti \Spxname^*\,,
  \]
  in that one requires the
  \begin{itemize}
\item[$\mathbf{<QYE>}$ \textbf{(Quantitative Young Estimate):}] 
 \begin{equation}
  \label{eq:QuYouEst}
  \exists\,c, C>0\ \forall \, (v,\xi)\in \gX \ti \gX^*: \quad
\gR(v)+\gR^*(\xi) \geq c\|v\|\,\| \xi \|_* -C.
\end{equation}
\end{itemize}

Indeed, if $(\gX,\gE,\gR)$ satisfies conditions $\mathbf{<E>}$, $\mathbf{<R>}$,
$\mathbf{<CR>}$, and $\mathbf{<QYE>}$,  then, thanks  to
\eqref{eq:QuYouEst}, for any pair
$(u,\xi) \in \AC([0,T];\gX)\ti \rmL^1([0,T];\gX^*)$ satisfying \eqref{EDB-UE}
we have that  the estimates in \eqref{first-part-CR} hold. Hence, the
chain rule formula \eqref{eq:48strong} is valid.

Let us now gain further insight into condition $\mathbf{<QYE>}$. It is known
that estimate \eqref{eq:QuYouEst} is not true in general, see
\cite[Ex.\,3.4]{MR21} for a counterexample.  In turn, the following result
provides a useful sufficient condition for \eqref{eq:QuYouEst}.

\begin{lemma}
\label{l:2.2} 
Assume that there exists a continuous, convex and superlinear function
$\psi:[0,\infty[\to[0,\infty[$ and constants $c_1\geq1$ and $c_2,c_3>0$ such
that
\begin{equation}
\label{e:ALEX-suff-cond}
\forall\, v \in \gX\,: \quad  \psi(\frac{1}{c_3}\|v\|)- c_2 \leq \gR(v) \leq
c_1\, \psi(c_3\|v\|) + c_2. 
\end{equation}
Then, the quantitative Young estimate \eqref{eq:QuYouEst} holds with $c=1/(c_1c_3^2)$ and $C=2c_2$.
\end{lemma}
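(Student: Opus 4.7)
The strategy is to pass the sandwich bound on $\gR$ through Legendre--Fenchel conjugation to obtain a matching lower bound on $\gR^*$ in terms of the scalar conjugate $\psi^*(r):=\sup_{s\geq 0}(rs-\psi(s))$, and then to close the estimate using the classical Young inequality $\psi(s)+\psi^*(r)\geq rs$ for $r,s\geq 0$.

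First, I would compute the conjugate of the radial upper envelope $v\mapsto c_1\psi(c_3\|v\|)+c_2$. Since conjugation reverses inequalities, the assumption $\gR(v)\leq c_1\psi(c_3\|v\|)+c_2$ gives
\[
\gR^*(\xi)\geq \sup_{v\in \gX}\bigl\{\langle\xi,v\rangle - c_1\psi(c_3\|v\|)\bigr\}-c_2 .
\]
Writing $v=t\hat v$ with $\|\hat v\|=1$ and $t\geq 0$, the supremum over $\hat v$ produces $t\|\xi\|_*$; rescaling $s=c_3t$ and pulling out $c_1$ then identifies the residual scalar supremum as a dilated conjugate of $\psi$, yielding
\[
\gR^*(\xi)\geq c_1\,\psi^*\!\left(\tfrac{\|\xi\|_*}{c_1c_3}\right)-c_2 .
\]

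Adding this to the lower bound $\gR(v)\geq \psi(\|v\|/c_3)-c_2$ gives
\[
\gR(v)+\gR^*(\xi)\geq \psi\!\left(\tfrac{\|v\|}{c_3}\right)+c_1\,\psi^*\!\left(\tfrac{\|\xi\|_*}{c_1c_3}\right)-2c_2 .
\]
Now I would apply the scalar Young inequality to the pair $(\psi,\psi^*)$ at $s=\|v\|/c_3$ and $r=\|\xi\|_*/(c_1c_3)$, producing the key product $rs=\|v\|\|\xi\|_*/(c_1c_3^2)$; using the assumption $c_1\geq 1$ together with $\psi^*\geq 0$ (which holds since $\gR(0)=0$ forces $\psi(0)\leq c_2$ and a harmless shift makes $\psi(0)=0$, so that $\psi^*(r)\geq -\psi(0)=0$) allows to bound $c_1\psi^*(r)\geq \psi^*(r)$. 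Combining,
\[
\gR(v)+\gR^*(\xi)\geq \tfrac{1}{c_1c_3^2}\,\|v\|\,\|\xi\|_*-2c_2,
\]
which is exactly the QYE with $c=1/(c_1c_3^2)$ and $C=2c_2$.

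The main — albeit modest — obstacle is Step~1: computing the Fenchel transform of the upper envelope while cleanly tracking the multiplicative factors $c_1$ and $c_3$ through the rescalings in both the primal and dual variables. The hypothesis $c_1\geq 1$ is used only in the very last step to discard the excess factor in front of $\psi^*$; everything else is a direct appeal to the classical scalar Young inequality.
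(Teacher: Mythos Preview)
Your proof is correct and essentially identical to the paper's own argument: both derive the lower bound $\gR^*(\xi)\geq c_1\,\psi^*\!\big(\|\xi\|_*/(c_1c_3)\big)-c_2$ by conjugating the upper bound on $\gR$, add it to the lower bound $\gR(v)\geq \psi(\|v\|/c_3)-c_2$, use $c_1\geq 1$ to replace $c_1\psi^*$ by $\psi^*$, and finish with the scalar Young inequality $\psi(s)+\psi^*(r)\geq rs$. Your write-up even makes the conjugate computation and the role of $\psi^*\geq 0$ more explicit than the paper does.
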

In particular, the quantitative Young estimate \eqref{eq:QuYouEst} holds if $\gR$ is given by a functional of the norm, i.e. $\gR(v) = \psi(\|\mathbb{B}v\|)$ with a bounded, invertible operator $\mathbb{B}:\gX\to \gX$.
\\[0.3em]
\begin{proof}
  Thanks to \eqref{e:ALEX-suff-cond},  we have 
  $\gR^*(\xi) \geq c_1\psi^*\big(\|\xi\|_*/(c_1c_3) \big) - c_2$. Combining this
  with the Young's inequality $\psi(r)+\psi^*(s)\geq rs$ and using that
  $c_1\geq 1$ we infer
\begin{align*}
\gR(v) + \gR^*(\xi) &\geq \psi(\|v\|/c_3)-c_2 + c_1 \psi^*(\|\xi\|_*/ c_1c_3) - c_2
\\
&\geq \psi(\|v\|/c_3)+ \psi^*(\|\xi\|_*/ c_1c_3) -2c_2  \geq \frac1{c_1c_3^2}\, \|v\| \,
\|\xi\|_* - 2c_2.
\end{align*}
Thus, the result is established. 
\end{proof}

\subsection{Assumptions  on the generalized gradient systems}
\label{s:2}
As we will see in settling our requirements on the gradient systems
$(\Spx j, \calE, \calR_j)$, $j \in \{1,2\}$, the conditions expounded in
Section\ \ref{s:prelims} will have to be adjusted to the interplay between the
topologies of the spaces $(\Spx j, \norm{\cdot}j)$.

\subsubsection*{Ordering of Banach spaces}
We consider two (separable) and reflexive Banach spaces $(\Spx 1, \norm{\cdot} 1) $ and $(\Spx 2,  \norm{\cdot} 2)$, such that 
\begin{subequations}
\label{eq:ordering}
\begin{equation}
  \label{eq:oderX2X1}
  \Spx 2 \subset \Spx 1 \ \text{ and } \ \Spx 1^* \subset \Spx 2^*
  \text{ densely and continuously.}
\end{equation}
More precisely,  we  assume that 
\begin{equation}
\label{norm-control}
\exists\, C_\rmN \geq 1 \ \forall\, v \in \Spx 2 : \quad \norm{v}1 \leq  C_\rmN
\norm v2\,. 
\end{equation}
\end{subequations}

\subsubsection*{Driving energy functional}
Clearly, the time-dependent energy functional $\calE$ needs to have a domain
contained in the smaller space $[0,T]\ti\Spx 2$. It is on this domain that we
require the first set of basic conditions, namely conditions $\mathbf{<E>}$
previously introduced: boundedness from below (by a constant that, up to a
shift, can be assumed positive), time differentiability, and control of the
power functional by means of the energy functional itself.

\begin{hypothesis}[Time differentiability]
\label{hyp:en-1}
The energy $\calE: [0,T]\ti \Spx 1 \to (-\infty,\infty]$ has the proper domain 
\begin{equation}
\label{bded-below}
\mathrm{dom}(\calE) = [0,T]\ti \domE_0  \text{ with } \domE_0 \subset \Spx 2 \
\text{ and }  \ \exists\, C_0>0 \ \forall\, (t,u) \in  
[0,T]\ti \domE_0 : \ \en tu \geq C_0.
\end{equation}
Moreover, on $[0,T]\ti \domE_0$ the functional $\calE$ complies with $\mathbf{<E>}$.
\end{hypothesis}

It is convenient to  introduce the functional 
\begin{equation}
\label{frakE}
\mfE: \domE_0 \to [0,\infty), \qquad \mfE(u): = \sup_{t\in [0,T]}\en tu,
\end{equation}
and observe that, by
\eqref{gpower-control}
 and Gr\"onwall's lemma, 
\begin{equation}
\label{Gronwall-est}
\mfE(u) \leq \rme^{ C_\# T} \en tu \qquad\text{for all } (t,u) \in [0,T]\ti \domE_0\,.
\end{equation}
We will work with the sublevel sets 
\begin{equation}
\label{sublevel-sets}
S_E: = \{ u \in \Spx 2\, :  
\mfE(u) \leq E\}\,, \qquad E>0.
\end{equation}
We can now formulate our second condition.

\begin{hypothesis}[Lower semicontinuity \&  continuity of the power]
\label{h:2}
We require that for every $j \in \{1,2\}$  there holds
\begin{equation}
\label{h:2.1}
\begin{aligned}
\forall\, E>0:\  \
 & \Big( (t_n,u_n) \weakto (t,u) \text{ in } [0,T]\ti \Spx 1 \text{ and } 
   \ u_n \in S_E \text{ for all }n \in \N  \Big) 
 \\
 &  \Longrightarrow  \ \begin{cases}
 \liminf_{n\to \infty} \calE(t_n,u_n) \geq \calE(t,u)\,,
 \\
 \lim_{n\to\infty} \pet {t_n}{u_n} = \pet tu\,.
 \end{cases}
 \end{aligned}
\end{equation}
\end{hypothesis}
Thanks to \eqref{h:2.1}, the functional $\mfE$ is weakly lower semicontinuous
in $\Spx 1$ and thus the sublevel sets $S_E$ are (sequentially) weakly closed
in $\Spx 1$. We highlight that, in \eqref{h:2.1} lower semicontinuity of
$\en t{\cdot}$ is, a priori, required with respect to the coarser topology
given by $\Spx 1$. Nonetheless, in concrete examples $\en t{\cdot}$ may turn
out to be coercive with respect to the norm $\norm{\cdot}2$,  recall
$ \domE_0 \subset \Spx2$.  Hence, the information that there exists $E>0$
with $ u_n \in S_E $ for all $n\in\N $ may yield additional compactness
information on the sequence $(u_n)_n$, and in fact weaken the above lower
semicontinuity/continuity requirements.

Hypothesis \ref{hyp:en-2} below involves the Fr\'echet subdifferentials of
$\en t{\cdot}$ with respect to the duality pairings
$\pairing{}{\Spx j}{\cdot}{\cdot}$, namely the operators
$\frnameopt j : [0,T]\ti \domE_j \rightrightarrows \Spx j^*$ defined by
\[
\xi \in \frsubopt j tu  \ \Longleftrightarrow \ \en tw - \en tu \geq
\pairing{}{\Spx j}\xi{w{-}u} {} + \rmo(\norm{w{-}u}j) \quad \text{as }
\norm{w{-}u}j \to 0\,, 
\]
where for all $t\in[0,T]$ we define
$\domE_j=\{u\in\domE_0: \frsubopt j tu\neq \emptyset\}$. We have
$\domE_1\subset \domE_2$ and
\begin{equation}
\label{relation-subdiff}
\frsubopt 1tu \subset \frsubopt 2tu \cap \Spx 1^* \qquad \text{for all } (t,u) \in [0,T]\ti \domE_1\,.
\end{equation}

\begin{hypothesis}[Closedness of $\frnameopt j$ on energy sublevels]
\label{hyp:en-2}
We require that for every $j\in \{1,2\}$  we have  
\begin{equation}
\label{h:2.2}
\forall\, E>0: \left.
\begin{cases}
 (t_n,u_n,\xi_n) \weakto (t,u,\xi) \text{ in } [0,T]\ti \Spx 1 \ti \Spx j^*, 
 \\
  u_n \in S_E, \ \xi_n \in \frsubopt j{t_n}{u_n} 
   \text{ for all } n \in \N  
   \end{cases}\!\right\}
    \ \   \Longrightarrow  \ \ \xi \in \frsubopt jtu\,.
\end{equation}
\end{hypothesis}

We emphasize that in \eqref{h:2.2} closedness is imposed along sequences weakly
converging in $\Spx 1$; thus, for $j=2$ condition \eqref{h:2.2} may also be
understood as a compatibility requirement between the duality pairing of
$\Spx 1$ and that of $\Spx 2$.  Again, we remark that the requirement
$(u_n)_n \subset S_E$ and the additional compactness information granted
by it may turn \eqref{h:2.2} into a (more standard) closedness condition of the
graph of $\partial^{\Spx j}\cE$ in $\Spx j\times \Spx j^*$.

Finally, along the footsteps of \cite{MRS2013,MR21} we will assume the validity
of the following chain-rule condition for the subdifferentials $ \frnameopt j$.

\begin{hypothesis}[Chain rules]
\label{h:chain-rule}
For $j=1,2$ the pair $(\calE, \frnameopt j)$ satisfies the chain rule property
$\mathbf{<CR>}$.
\end{hypothesis}

\subsubsection*{Dissipation potentials}
In what follows, we will work with two dissipation potentials satisfying the
following conditions.
\begin{hypothesis}
\label{hyp:dis}
For $j\in \{1,2\}$ the functionals $\calR_j: \Spx j \to [0,\infty)$  and their conjugates $\calR_j^*: \Spx j^* \to [0,\infty)$ comply with condition $\mathbf{<R>}$. 
\end{hypothesis}
For later use, we reformulate the superlinear growth conditions \eqref{gsuperlinear-growth} that we require for  $\calR_j$
and
$\calR_j^*$
 in terms of a unique convex, superlinear and monotone function $\unipsi$ providing a lower bound 
for  the primal and dual dissipation potentials.
\begin{lemma}
\label{l:unique-psi}
The dissipation potentials $\calR_j: \Spx j \to [0,\infty)$ fulfill  \eqref{gsuperlinear-growth} if and only if 
there exists a convex and increasing function $\unipsi: [0,\infty) \to [0,\infty)$, with superlinear growth,  such that, for 
$ j \in \{1,2\}$, 
\begin{equation}
\label{e:psi-minorant}
\calR_j(v) \geq \unipsi(\norm vj) \  \text{ and }\  \calR_i^* (\xi) \geq
\unipsi(\norms{\xi} j) \quad \text{for all } v \in \Spx j \text{ and } 
\xi \in \Spx j^*\,.
\end{equation}
\end{lemma}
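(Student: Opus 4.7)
The ``if'' direction is immediate: if such a minorant $\unipsi$ exists, then for $v\in\Spx j$ with $\norm vj\to\infty$ one has $\calR_j(v)/\norm vj\geq \unipsi(\norm vj)/\norm vj\to\infty$, and analogously for $\calR_j^*$. Thus the substance of the lemma lies in the ``only if'' direction, and the plan is to construct $\unipsi$ explicitly by taking the worst of the four growth rates and then passing to a convex, superlinear lower envelope.

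Concretely, I would first introduce the four ``radial minorants''
\[
 \alpha_j(r):=\inf\{\calR_j(v):\, v\in \Spx j,\ \norm vj\geq r\},\quad
 \beta_j(r):=\inf\{\calR_j^*(\xi):\, \xi\in \Spx j^*,\ \norms\xi j\geq r\},
\]
for $j\in\{1,2\}$ and $r\geq 0$. By construction each of these is nondecreasing, nonnegative, and (using $\calR_j(0)=0$, and that $\calR_j^*(0)\leq 0$ hence $=0$) equal to $0$ at $r=0$. The superlinear growth assumption \eqref{gsuperlinear-growth} translates directly into $\alpha_j(r)/r\to\infty$ and $\beta_j(r)/r\to\infty$: given $M>0$, the condition $\calR_j(v)\geq M\norm vj$ for $\norm vj\geq R_M$ forces $\alpha_j(r)\geq Mr$ for $r\geq R_M$, and similarly for $\beta_j$. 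Setting $\widehat\unipsi(r):=\min_{j\in\{1,2\}}\min\{\alpha_j(r),\beta_j(r)\}$ yields a single nondecreasing function $\widehat\unipsi:[0,\infty)\to[0,\infty)$ with $\widehat\unipsi(0)=0$, superlinear at infinity, and such that, by the very definition of $\alpha_j$ and $\beta_j$, $\calR_j(v)\geq\widehat\unipsi(\norm vj)$ and $\calR_j^*(\xi)\geq\widehat\unipsi(\norms\xi j)$ for all $v,\xi$.

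The remaining task is to replace $\widehat\unipsi$ by a \emph{convex} minorant without losing superlinearity. I would define
\[
 \unipsi(r):=\sup\bigl\{L r-C:\ L,C\geq 0 \text{ and } Ls-C\leq \widehat\unipsi(s)\text{ for every }s\geq 0\bigr\},
\]
which is convex and nondecreasing as a supremum of affine, nondecreasing functions, and satisfies $\unipsi\leq\widehat\unipsi$ (so the minorant property is preserved) and $\unipsi\geq 0$ (by admissibility of $L=C=0$). Superlinearity follows by an easy verification: for any $M>0$, choose $R_M$ with $\widehat\unipsi(r)\geq 2Mr$ for $r\geq R_M$ and set $C_M:=M R_M$; then the affine function $r\mapsto Mr-C_M$ is admissible because on $[0,R_M]$ it is $\leq 0\leq\widehat\unipsi$, while for $r\geq R_M$ one has $Mr-C_M\leq Mr\leq\widehat\unipsi(r)$. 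Hence $\unipsi(r)\geq Mr-C_M$ for every $r$, giving $\liminf_{r\to\infty}\unipsi(r)/r\geq M$ for arbitrary $M$. Since $\widehat\unipsi(0)=0$ forces $\unipsi(0)=0$, the function $\unipsi$ has all the advertised properties, and $\calR_j(v)\geq\unipsi(\norm vj)$, $\calR_j^*(\xi)\geq\unipsi(\norms\xi j)$ are inherited from $\widehat\unipsi$.

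The only mildly delicate point is confirming that convexification does not destroy superlinearity; that is handled by the explicit affine-minorant construction above, which by design produces affine minorants of arbitrary slope. No deeper difficulty is expected.
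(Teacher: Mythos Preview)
Your proof is correct and follows essentially the same approach as the paper: both reduce to constructing a single convex superlinear minorant as a supremum of admissible affine functions with arbitrarily large slopes. The paper's version is marginally more direct, writing the affine lower bounds $Kr-\mathsf S_K$ immediately from superlinearity and taking the worst constant over the four functionals, without introducing your intermediate radial minorant $\widehat\unipsi$; but the content is the same.
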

\begin{proof}
Clearly, \eqref{e:psi-minorant} implies the superlinear growth
\eqref{gsuperlinear-growth} for the potentials $\calR_j$ and $\calR_j^*$. To
check the converse implication, observe that, since $\calR_j$ and  
$\calR_j^*$ are superlinear, for each $j\in \{1,2\}$  we have  
\begin{equation}
\label{e:K_sK}
\forall\, K\geq 0 \ \exists\, S_K^j,\, S_K^{j,*} \geq0 \ \forall\, v \in \Spx
j, \, \xi \in \Spx j^*:  \quad \begin{cases} 
\calR_j(v) \geq K \norm{v}j -S_K^j,
\\ 
\calR_j^*(\xi) \geq K \norms{\xi}j -S_K^{j,*},
\end{cases}
\end{equation}
with   $S_0^j=S_0^{j,*}=0$.
For fixed $K\geq 0$ set 
$\mathsf{S}_K: = \max_{j =1,2} \{ S_K^j,  S_K^{j,*}\}$
and  define 
\[
\unipsi(r): = \sup\{ Kr-\mathsf{S}_K\,: \ K \geq 0\}  \quad \text{for } r \in [0,\infty).
\]
 By construction,  $\calR_j$ and $\calR_j^*$ satisfy
\eqref{e:psi-minorant}.  It is immediate to check that $\unipsi$ is monotone
increasing, has superlinear growth at infinity, and since
$\unipsi(0)=0$, satisfies $\unipsi(r)\geq 0$ for all $r\geq 0$. Moreover,
$\unipsi$ is convex as it is given by the supremum of linear functions.
\end{proof}

We now introduce some specific conditions to deal with the split-step system
driven by the $\inf$-convolution of $\calR_1$ and $\calR_2$.  Since $\calR_1$
and $\calR_2$ are defined on two different Banach spaces, the effective
dissipation potential $\calR_\eff$ needs to be carefully specified.  It is more
straightforward to first define the dual potential $\calR_\eff^*$ on the
smaller dual space $\Spx 1^*$.  Therefore, let us introduce the functional
\[
  \Rtrial: \Spx 1^* \to [0,\infty), \qquad \Rtrial(\xi):= \calR_1^*(\xi)+
  \calR_2^*(\xi) \quad \text{for } \xi \in \Spx 1^*\,.
\]
We now identify $\Rtrial$ as the conjugate of the potential given by the
infimal convolution of $\calR_1$ and of the functional $\overline{\calR}_2$
that extends $\calR_2 $ to the whole of $\Spx 1$ by $\infty$ on
$\Spx 1{\setminus} \Spx 2$. We mention in advance that in \eqref{largest} below
we will directly define $ \calR_\eff$ via a minimum: in the proof of Lemma
\ref{l:Reff} we will show by the \emph{direct method} that the infimum is
attained.

\begin{lemma}[Properties of the inf-convolution]
\label{l:Reff}
Let $\overline{\calR}_2: \Spx 1\to [0,\infty)$ be defined by $\overline{\calR}_2(v): = \calR_2(v)$ if $v\in  \Spx 2$, and  by $\overline{\calR}_2(v): = \infty$ else.
Define 
  \begin{equation}
\label{largest}
 \calR_\eff : \Spx 1 \to [0,\infty) \qquad  \calR_\eff(v) : = \min_{v_1,\, v_2 \in \Spx 1, \ v = v_1+v_2 } \left( \calR_1(v_1)+\overline{\calR}_2(v_2) \right)\,.
 \end{equation}
 Then,   the following statements hold:\vspace{-0.5em}  
 \begin{enumerate}
 \item[\upshape(1)] 
 $\calR_\eff $ is  lower semicontinuous and convex, with $\calR_\eff^*= \Rtrial$.
 \item[\upshape(2)] 
   With $\Psi$ from Lemma \ref{l:unique-psi} there  holds
 \begin{equation}
 \label{crucial-bounds}
 2\unipsi\big( \frac1{2C_\rmN} \| v\|_1\big) \leq \calR_\eff(v) \leq \calR_1(v)
\leq \unipsi^*\big( \| v\|_1\big) \quad \text{for all } v \in \Spx 1. 
\end{equation}\vspace{-2.5em}
\end{enumerate}
\end{lemma}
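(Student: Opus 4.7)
My plan is to proceed in four steps: (i) verify that the infimum in \eqref{largest} is actually attained (so ``$\min$'' is justified); (ii) deduce convexity and lower semicontinuity of $\calR_\eff$; (iii) compute the conjugate $\calR_\eff^* = \Rtrial$ via the standard inf-convolution identity; (iv) establish the two-sided bounds in \eqref{crucial-bounds}. The common technical thread is the ordering \eqref{eq:ordering}: since $\Spx 1^* \subset \Spx 2^*$ continuously, weak convergence in $\Spx 2$ automatically entails weak convergence in $\Spx 1$, which is what lets a two-space direct-method argument close.

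For attainment, I would fix $v \in \Spx 1$ with $\calR_\eff(v) < \infty$ and take a minimizing sequence $(v_1^n, v_2^n)$ with $v_1^n + v_2^n = v$ and $v_2^n \in \Spx 2$. Boundedness of $\calR_2(v_2^n)$ together with Lemma \ref{l:unique-psi} and the superlinearity of $\unipsi$ gives that $(v_2^n)$ is bounded in $\Spx 2$; reflexivity then yields a weak limit $\bar v_2 \in \Spx 2$ along a subsequence, and hence $v_1^n = v - v_2^n \weakto \bar v_1 := v - \bar v_2$ in $\Spx 1$ by the topological compatibility just mentioned. Weak lower semicontinuity of $\calR_1$ in $\Spx 1$ and of $\calR_2$ in $\Spx 2$ then yields $\calR_1(\bar v_1) + \calR_2(\bar v_2) \leq \calR_\eff(v)$, so the minimum is achieved at $(\bar v_1, \bar v_2)$. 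Convexity of $\calR_\eff$ is immediate by convex averaging of (almost) optimal decompositions, and the very same extraction argument, applied along a sequence $v_n \weakto v$ in $\Spx 1$ to optimal decompositions of each $v_n$, delivers the sequential weak lower semicontinuity of $\calR_\eff$ itself.

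For the conjugacy, I would compute, for $\xi \in \Spx 1^*$,
\[
\calR_\eff^*(\xi) = \sup_{v_1, v_2 \in \Spx 1}\!\Bigl(\pairing{}{\Spx 1}{\xi}{v_1 + v_2} - \calR_1(v_1) - \overline{\calR}_2(v_2)\Bigr) = \calR_1^*(\xi) + \calR_2^*(\xi),
\]
where the supremum over $v_2$ effectively ranges over $\Spx 2$ (as $\overline{\calR}_2 = \infty$ elsewhere) and the pairing agrees with $\pairing{}{\Spx 2}{\xi}{v_2}$ because $\xi \in \Spx 1^* \subset \Spx 2^*$ via \eqref{eq:oderX2X1}.

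The bounds in \eqref{crucial-bounds} are then a matter of bookkeeping. The upper estimate $\calR_\eff(v) \leq \calR_1(v)$ follows from the admissible decomposition $(v,0)$ together with $\calR_2(0) = 0$. The further bound $\calR_1(v) \leq \unipsi^*(\norm v1)$ is obtained by taking conjugates of the minorization $\calR_1^*(\xi) \geq \unipsi(\norms\xi 1)$ from Lemma \ref{l:unique-psi}, which gives $\calR_1 = \calR_1^{**} \leq (\unipsi(\norms\cdot 1))^*$, the right-hand side being computed as $\unipsi^*(\norm\cdot 1)$ via the duality of norms and monotonicity of $\unipsi$. For the lower bound, any admissible decomposition $v = v_1 + v_2$ with $v_2 \in \Spx 2$ satisfies, by \eqref{e:psi-minorant}, \eqref{norm-control}, monotonicity of $\unipsi$, and $C_\rmN \geq 1$,
\[
\calR_1(v_1) + \calR_2(v_2) \geq \unipsi(\norm{v_1}1) + \unipsi(\norm{v_2}2) \geq \unipsi\!\bigl(\tfrac{\norm{v_1}1}{C_\rmN}\bigr) + \unipsi\!\bigl(\tfrac{\norm{v_2}1}{C_\rmN}\bigr);
\]
convexity of $\unipsi$ at $\theta = 1/2$, the triangle inequality $\norm{v_1}1 + \norm{v_2}1 \geq \norm{v}1$, and monotonicity of $\unipsi$ then deliver $2\unipsi(\norm v1/(2C_\rmN))$. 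The delicate step of the whole proof is the direct-method attainment: one must carefully exploit the dual inclusion $\Spx 1^* \subset \Spx 2^*$ to reconcile the asymmetric roles of $\Spx 1$ and $\Spx 2$ in the decomposition $v_1 + v_2 = v$.
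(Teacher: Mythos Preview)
Your proof is correct and essentially self-contained, but it departs from the paper's argument in two places. For Claim (1), the paper does not prove convexity and lower semicontinuity of $\calR_\eff$ directly; instead it first computes $\calR_\eff^* = \Rtrial$ via \cite[Thm.\,1, p.\,178]{IofTih79TEPe}, then applies the same theorem once more to obtain $\Rtrial^* = \calR_1 \INFCONV \overline{\calR}_2 = \calR_\eff$, so that $\calR_\eff^{**} = \calR_\eff$ and convexity/lsc follow for free. Your direct-method argument for weak lower semicontinuity is valid and more elementary, avoiding the external reference, but it is a bit longer. For Claim (2), the paper works entirely on the dual side: it chains the inequalities $\unipsi(\norms{\xi}1) \leq \calR_1^*(\xi) \leq \calR_\eff^*(\xi) \leq \unipsi^*(\norms{\xi}1) + \unipsi^*(\norms{\xi}2) \leq 2\unipsi^*(C_\rmN \norms{\xi}1)$ and then conjugates once to obtain \eqref{crucial-bounds}. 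You instead establish the lower bound $2\unipsi(\norm v1/(2C_\rmN)) \leq \calR_\eff(v)$ directly on the primal side via the convexity/monotonicity of $\unipsi$ applied to an arbitrary decomposition; this is equally clean and arguably more transparent, since it does not require computing the conjugate of $2\unipsi^*(C_\rmN\,\cdot)$. The attainment argument and the computation of $\calR_\eff^*$ are the same in both proofs.
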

 Estimate \eqref{crucial-bounds}
highlights that, ultimately,  the relevant Banach space for $\calR_\eff$ is
 $\Spx 1$. That is why, from now on we will use the notation 
 $\Spx {\eff}  := \Spx 1$.  
\\[0.3em]
\begin{proof}
Preliminarily, observe that also the extended potential $ \overline{\calR}_2$
is lower semicontinuous on $\Spx 1$. To see this, take a sequence $v_n\to v$ in
$\Spx 1$ such that $\liminf_n\overline{\calR}_2(v_n)<\infty$. Then $v_n\in \Spx
2$ and $\overline{\calR}_2(v_n)=\calR_2(v_n)$. By coercivity of $\calR_2$ and
reflexivity of $\Spx 2$ we have $v_n\rightharpoonup v$ in $\Spx 2$ for a
non-relabeled subsequence. Hence, $v\in \Spx 2$. Since $\calR_2$ is convex and
strongly lower semicontinuous, it is also weakly lower semicontinuous,  which yields that 
$\liminf_n\overline{\calR}_2(v_n) = \liminf_n \calR_2(v_n) \geq \calR_2(v) =
\overline{\calR}_2(v) $.\medskip  

\noindent
\emph{Proof of Claim} (1):
Applying  \cite[Thm.\,1, p.\,178]{IofTih79TEPe} we immediately find that 
\[
  \calR_\eff^*= \calR_1^*+ \overline{\calR_2}^* = \calR_1+\calR_2^*|_{\Spx 1^*}
  = \Rtrial
\]
(where the duality pairing with respect to all the conjugates above is that
between $\Spx 1^*$ and $\Spx 1$).  In turn, thanks to Hypothesis \ref{hyp:dis},
$\mathrm{dom}(\calR_j^*)=\Spx j^*$ for $j\in \{1,2\}$, hence both conjugate
potentials $\calR_j^*$ are continuous on their common domain $\Spx 1^*$.
Therefore, \cite[Thm.\,1]{IofTih79TEPe} again applies, yielding that
$ (\calR_1^*{+}\calR_2^*)^* = (\calR_1^{**} \INFCONV \calR_2^{**} ) $ (where
the have simply written $\calR_2^*$ in place of $\calR_2^*|_{\Spx 1^*}$).
Thus,
\[
\calR_\eff^{**} = \Rtrial^* =  (\calR_1^*{+}\calR_2^*)^* = (\calR_1^{**}
\INFCONV \calR_2^{**} ) = (\calR_1 \INFCONV\, \overline{\calR_2}) = \calR_\eff\,,
\]
which ensures that $\calR_\eff $ is convex and lower semicontinuous on $\Spx 1$. \\

Moreover, we see that in \eqref{largest} the minimum is attained. Indeed, 
taking, for a given $v\in \Spx 1$ with $\calR_\eff(v)<\infty$,   minimizing sequences $(v^1_n)_n,\, (v^2_n)_n \subset  \Spx 1$ 
 such that  $v^1_n+v^2_n=v$, we again observe that, by
coercivity, up to a (non-relabeled) subsequence we have
$v_n^1\rightharpoonup v^1$ in $\Spx 1$ and $v_n^2\rightharpoonup v^2$ in
$\Spx 2$ for some $v^i \in \Spx i$. Hence, we obtain that
\[
\cR_\eff (v) \geq \liminf_n \calR_1(v_n^1) + \liminf_n \calR_2(v_n^2) 
 \geq \calR_1(v^1) +\calR_2(v^2),
\] 
and the claim follows.\medskip

\noindent
\emph{Proof of Claim} (2): For all $\xi \in \Spx 1^*$
we observe the following chain of inequalities: 
\begin{align}
\unipsi\big(\norms{\xi}{1}\big) &\stackrel{(1)}{\leq} \calR_1^*(\xi) \leq \calR_\eff^*(\xi)  
\stackrel{(2)}{\leq}  \unipsi^*\big( \norms{\xi}1\big) + \unipsi^*\big( \norms \xi 2\big) 
 \\
& \stackrel{(3)}{\leq}  \unipsi^*\big( \norms{\xi}1 \big) + \unipsi^*\big( C_\rmN  \norms{\xi}1\big)
\stackrel{(4)}{\leq}  2\unipsi^*(C_\rmN \norms{\xi}1\big)\,,
\end{align}
where \recnote1 and \recnote2 follow from \eqref{e:psi-minorant}, while
\recnote3 and \recnote 4 are due to \eqref{norm-control}, also taking into
account the monotonicity of $\unipsi$ and $C_\rmN \geq1$.  Then,
\eqref{crucial-bounds} follows by conjugation.
\end{proof}

Our last, crucial condition on the dissipation potentials $\calR_j$, and on
$\calR_\eff$ as well, is that they all satisfy the QYE (with estimate
\eqref{eq:QuYouEst} for $\calR_\eff$ involving the norms $\norm{\cdot}1$ and
$\norms{\cdot}1$, and the space $\Spx \eff$ to be understood as $\Spx 1$).

\begin{hypothesis}
\label{hyp:QYE}
We require that the potentials $\calR_j: \Spx j \to [0,\infty)$ for
$j \in\{1,2,\eff\}$ satisfy the Quantitative Young estimate $\mathbf{<QYE>}$.
\end{hypothesis}
Of course, the Quantitative Young estimate for $\cR_\eff$ is not independent of
the analogous requirements for $\cR_1$ and $\cR_2$. We refer to Appendix
\ref{s:QYE} for some results addressing this connection.

\subsubsection*{Existence results}
It follows from the Hypotheses listed above that the existence result
\cite[Thm.\,2.2]{MRS2013} applies to the (generalized) gradient systems
$(\Spx j, \calE, \calR_j)$, $j=1,2$. In particular, observe that, since weak
convergence in $\Spx 2$ implies weak convergence in $\Spx 1$, Hypothesis
\ref{h:2} guarantees (sequential) lower semicontinuity of $\calE$ and
continuity of $\partial_t \calE$ with respect to the weak topology of $\Spx 2$;
likewise, Hypothesis \ref{hyp:en-2} ensures (sequential) closedness of
$\frnameopt 2$ with respect to the weak-weak topology of $\Spx 2 \ti \Spx
2^*$. Therefore,  we conclude the existence of solutions to the Cauchy
problems for
\begin{equation}
\label{Cauchy-pb}
\partial \calR_j(u'(t)) + \frsubopt j t{u(t)} \ni 0  \text{ in } \Spx j^*
\  \text{ a.e.\ in } (0,T), \quad u(0)=u_0\in \domE_0,  
 \quad j \in \{1,2\}
\end{equation}
also satisfying the associated energy-dissipation balance. In fact, by Lemma \ref{l:Reff} the effective dissipation potential $\calR_\eff$
enjoys the same properties of $\calR_1$ and $\calR_2$, and we can likewise directly conclude an existence result for the Cauchy problem associated with 
\begin{equation}
\label{Cauchy-pb-eff}
\partial \calR_\eff(u'(t)) + \frsubopt 1 t{u(t)} \ni 0  \text{ in } \Spx 1^*  
\  \text{ a.e.\ in } (0,T), \quad u(0)=u_0\in \domE_0\, .  
\end{equation}
Theorem \ref{exist:gflows} below states the existence of solutions to the
Cauchy problems for \eqref{Cauchy-pb} and \eqref{Cauchy-pb-eff}.  In fact, the
main objective of the analysis carried out in the ensuing Sections \ref{se:TimeSplit}
and \ref{s:4} will be to demonstrate that a solution to \eqref{Cauchy-pb-eff}
can be constructed via the time-splitting method.  

\begin{theorem}
\label{exist:gflows}
Assume Hypotheses \ref{hyp:en-1}, \ref{h:2}, \ref{hyp:en-2},
\ref{h:chain-rule}, \ref{hyp:dis}, and \ref{hyp:QYE}.  Then, for every
$u_0\in \domE_0$ and each $j\in \{1,2,\eff\}$ the subdifferential inclusions
\eqref{Cauchy-pb} and \eqref{Cauchy-pb-eff} admit a solution
$u_j\in \AC([0,T];\Spx j)$  (where  $\Spx \eff = \Spx 1$)
satisfying $u_j(0)=u_0$, and indeed there exist measurable selections
$(0,T) \ni t \mapsto \xi_j(t) \in \frsubopt j t{u_j(t)} \in \Spx j^*$ with
$- \xi_j(t) \in \partial \calR_j(u_j'(t))$ for a.a.\ $t\in (0,T)$, such
that $(u_j,\xi_j)$ fulfills the energy-dissipation balance
\begin{equation}
\label{EDB-i}
\en t{u_j(t)} + \int_s^t \left(  \calR_j (u_j'(r)){+} \calR_j^*({-}\xi_j(r))
\right) \dd r  = \en s{u_j(s)} +\int_s^t \pet r{u_j(r)} \dd r   
\end{equation}
for every  $0 \leq s \leq t \leq T$.
\end{theorem}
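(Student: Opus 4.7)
The plan is to apply the abstract existence result \cite[Thm.\,2.2]{MRS2013}
separately to each of the three generalized gradient systems
$(\Spx j,\calE,\calR_j)$ for $j\in \{1,2,\eff\}$, where in the case
$j=\eff$ we set $\Spx{\eff}:=\Spx 1$ and use the effective potential
$\calR_\eff$ from Lemma \ref{l:Reff}. For each value of $j$ the strategy is
to construct discrete approximants via Minimizing Movements, extract a limit
curve $u_j$ and a measurable selection $\xi_j(t)\in\frsubopt j t{u_j(t)}$
via compactness, and finally identify the limit through the
energy-dissipation principle of Proposition \ref{p:EDP}.

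First I would set up the incremental scheme: for a partition with step
$\tau>0$ and $u_0\in\domE_0$, define $U^\tau_k$ recursively as a minimizer of
$U\mapsto \tau\calR_j\big((U{-}U^\tau_{k-1})/\tau\big)+\en{k\tau}U$ on
$\Spx j$. Minimizers exist thanks to the coercivity coming from the
superlinearity of $\calR_j$ (Hypothesis \ref{hyp:dis}) and the lower
semicontinuity and lower bound of $\calE$ (Hypothesis \ref{hyp:en-1} and
\ref{h:2}). Writing out the Euler--Lagrange relation one obtains a discrete
force $\xi^\tau_k\in\frsubopt j {k\tau}{U^\tau_k}$ with
$-\xi^\tau_k\in\partial\calR_j\big((U^\tau_k{-}U^\tau_{k-1})/\tau\big)$, and
hence a discrete energy-dissipation inequality. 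The Quantitative Young
Estimate (Hypothesis \ref{hyp:QYE}) together with the power control
\eqref{gpower-control} and Gr\"onwall's lemma then delivers uniform bounds
on $\mfE(U^\tau_k)$ as well as on
$\int_0^T\calR_j(\,\cdot\,)+\calR_j^*({-}\,\cdot\,)\dd t$ for the
piecewise interpolants.

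Next I would pass to the limit $\tau\downarrow 0$. Superlinearity of
$\calR_j$ yields equi-absolute continuity of the interpolants in $\Spx j$
and a subsequence along which the piecewise affine interpolants converge
uniformly and their derivatives converge weakly in $\rmL^1(0,T;\Spx j)$ to
$u_j'$; superlinearity of $\calR_j^*$ yields a weak limit $\xi_j$ of
$\xi^\tau$ in $\rmL^1(0,T;\Spx j^*)$. Hypothesis \ref{h:2} allows one to
identify the time-continuity of the energy along the limit and to pass to
the limit in the power term, while Hypothesis \ref{hyp:en-2} provides the
inclusion $\xi_j(t)\in\frsubopt j t{u_j(t)}$ for a.a.\ $t$, by a standard
Ioffe-type liminf/closedness argument on graphs. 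Convexity and lower
semicontinuity of $\calR_j$ and $\calR_j^*$ produce the two liminf
inequalities for the dissipation integrals. Passing to the limit in the
discrete energy-dissipation inequality then yields the upper estimate
\eqref{EDB-UE} for $(u_j,\xi_j)$.

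Finally, I would invoke the chain rule $\mathbf{<CR>}$ of Hypothesis
\ref{h:chain-rule}, which is applicable because the QYE together with the
bound on $\gen\cdot{u_j(\cdot)}$ gives
$\int_0^T\norms{\xi_j(t)}{j}\norm{u_j'(t)}{j}\dd t<\infty$. The
energy-dissipation principle, Proposition \ref{p:EDP}, then upgrades the
upper estimate to the balance \eqref{EDB-i} and to the primal inclusion
$-\xi_j(t)\in\partial\calR_j(u_j'(t))$. For $j\in\{1,2\}$ the weak
topology of $\Spx 2$ is finer than that of $\Spx 1$, so the Hypotheses
stated using $\Spx 1$-weak convergence apply verbatim; for $j=\eff$ Lemma
\ref{l:Reff} ensures that $\calR_\eff$ is convex, lower semicontinuous, and
superlinear on $\Spx{\eff}=\Spx 1$, and Hypothesis \ref{hyp:QYE} guarantees
the $\QYE$ for $\calR_\eff$, so the same argument applies. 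The main
obstacle is the liminf/closedness step for the forces $\xi^\tau$, as this is
where the interplay between the subdifferentials $\frnameopt j$ and the
topology of $\Spx 1$ (in which most a priori estimates live) is crucial;
this is precisely the content of Hypothesis \ref{hyp:en-2}.
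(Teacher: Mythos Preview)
Your proposal is correct and follows essentially the same approach as the paper: the paper does not give a self-contained proof but simply invokes \cite[Thm.\,2.2]{MRS2013}, noting that the Minimizing-Movement argument there carries over once the strong-topology conditions are replaced by the weak-topology Hypotheses \ref{h:2} and \ref{hyp:en-2}, and that Lemma \ref{l:Reff} puts $\calR_\eff$ on the same footing as $\calR_1$ and $\calR_2$. One small inaccuracy: the a priori energy and dissipation bounds at the discrete level come from the power-control estimate \eqref{gpower-control} and Gr\"onwall alone, not from the QYE; the QYE enters only later, to verify \eqref{first-part-CR} so that the chain rule $\mathbf{\langle CR\rangle}$ becomes applicable in the final step.
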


It is important to mention  that our conditions on the generalized gradient
systems $(\Spx j,\calE, \calR_j)$ are slightly weaker than  those required
in \cite{MRS2013}. There, compactness of sublevels was required of the energy
functional and, accordingly, the lower semicontinuity and closedness conditions
were imposed along sequences converging with respect to the \emph{strong}
topology. Here, we work in the more general setup of Hypotheses \ref{h:2} and
\ref{hyp:en-2}: in fact, a close perusal of the proof of
\cite[Thm.\,2.2]{MRS2013} reveals that its arguments can be adapted to the
present setup,  see also \cite{MR21}.

\section{The time-splitting  approach}
\label{se:TimeSplit}

For the time-splitting method let us consider a (possibly non-uniform)
partition of the interval $[0,T]$
\begin{equation}
  \label{non-uniform-partition}
  \begin{aligned}
    & \mathscr{P}_{\bftau} = \{ \dis t \bftau 0 = 0 < \dis t \bftau 1< \ldots <
    \dis t \bftau k < \dis t \bftau{k+1} < \ldots < \dis t\bftau{N_\bftau} =
    T\}
    \\
    &  \text{with }  \tau_k = \dis t\bftau{k} {-} \dis t \bftau{k-1}
      \text{ and } \ttau : = \max\bigset{\tau_k}{ k=1,\ldots, N_\bftau}\,.
  \end{aligned}
\end{equation}
We also introduce the `left' and `right'  semi-intervals generated  by
$\mathscr{P}_{\bftau}$, namely
\begin{equation}
  \label{left-right-intervals}
  \lint{k}{\bftau} : =  \left(\dis t{\bftau}{k-1}, \dis t{\bftau}{k-1}+\tfrac
    {\tau_k}2 \right], \qquad  
  \rint{k}{\bftau}: = \left( \dis t{\bftau}{k-1}+\tfrac {\tau_k}2, \dis t
    \bftau k \right]\,. 
\end{equation}
 In what follows, we will use the short-hand 
 \begin{equation}
 \label{thalf}
 \thalf : = \dis t{\bftau}{k-1}+\tfrac {\tau_k}2= \tfrac12\big(\dis t{\bftau}{k-1} +
 \dis t{\bftau}{k}\big)  \qquad \text{for } k=1,\ldots, N_\bftau\,. 
\end{equation}
To simplify notation, we introduce the piecewise constant interpolants
associated with the nodes of the partition
\begin{equation}
\label{time-interpolants}
\begin{aligned}
& \pwc \sft \bftau: [0,T]\to [0,T],  \quad &&   \pwc \sft\bftau(0): = 0,  \quad
&&  \pwc \sft\bftau(t): = \dis t\bftau k  && \text{ for } t \in (\dis
t\bftau{k-1}, \dis t \bftau k]; 
\\
 & \upwc \sft \bftau: [0,T]\to [0,T], \quad   && \upwc \sft\bftau(T): = T,
 \quad  && \upwc \sft\bftau(t): = \dis t\bftau {k-1}  && \text{ for } t \in
 [\dis t\bftau{k-1}, \dis t \bftau k). 
\end{aligned}
\end{equation}
We will also use the notation
\begin{equation}
\label{tau-dependent-indexes}
\mt{\bftau}t  : = k   \  \text{ for }  t \in \lint{k}{\bftau} \cup
\rint{k}{\bftau} \quad \text{and} \quad \wt\tau(t)  :=\tau_{\mt{\bftau}t} 
\quad \text{for } t \in (0,T]\,,
\end{equation} 
 with $\wt\tau(0):=\tau_1$. 

\subsubsection*{Repetition operators}
A  key tool  for our analysis are the following operators,
  defined on the space $ \rmL^1([0,T];\Spgx) $ with  a (general) separable and
  reflexive Banach space   $\Spgx$:
\begin{subequations}
\label{repetition-operators}
\begin{align}
\TT^{(1)}_\bftau{:} &\  \rmL^1([0,T];\Spgx)  \to  \rmL^1([0,T];\Spgx); \ \ 
\big(\TT^{(1)}_\bftau g\big) (t) := \left\{ \ba{c@{\ }l} g(t)& \text{for } t
  \in \lint{\bftau}{\mt {\bftau}t}\,, 
 \\
   g\big(t{-}\tfrac{\wt\tau(t)}2\big)& \text{for }  t \in  \rint{\bftau}{\mt {\bftau}t}\,,
 \ea \right .
 \\
\TT^{(2)}_\bftau{:} &\   \rmL^1([0,T];\Spgx)  \to  \rmL^1([0,T];\Spgx); \ \ 
\big(\TT^{(2)}_\bftau g\big) (t) := \left\{ \ba{c@{\ }l} 
 g\big(t{+}\tfrac{\wt\tau(t)}2\big)& \text{for } t  \in \lint{\bftau}{\mt {\bftau}t}\,,
 \\
   g(t)& \text{for }  t \in  \rint{\bftau}{\mt {\bftau}t}\,.
 \ea \right .
\end{align}
\end{subequations}
We shall refer to  $\TT^{(1)}_\bftau$ and $\TT^{(2)}_\bftau$  as
\emph{repetition operators}, since $\TT^{(1)}_\bftau g$  repeats,  on
the `right'  semi-intervals, the values of the function $g$ from the `left'
semi-intervals,  while $\TT^{(2)}_\bftau g$ does the converse, see Figure
\ref{fig:Repetition} for an illustration.

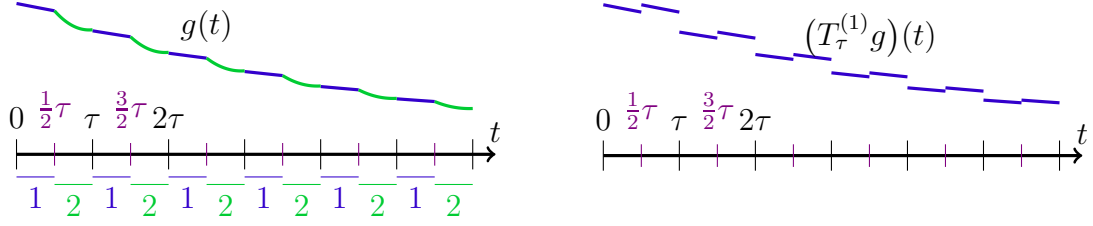
\begin{figure}
\centering
\begin{tikzpicture}
\draw[very thick, -> =triangle45] (0,0)--(6.3,0) node[above]{$t$};
\foreach \x in {0,...,6} \draw (\x,0.2)--(\x,-0.2);
\foreach \x in {0,...,5} \draw[blue!50!red] (0.5+\x,0.15)--(0.5+\x,-0.15);
\node at (0,0.2) [above]{$0$};
\node at (0.5,0.2) [above,blue!50!red]{$\frac12\tau$};
\node at (1,0.2) [above]{$\tau$};
\node at (1.5,0.2) [above,blue!50!red]{$\frac32\tau$}; 
\node at (2,0.2) [above]{$2\tau$};

\foreach \nnn in {0,...,5}
     \draw[domain=0:0.5,color=blue!80!red, very thick ] 
         plot ({\nnn+\x},{2*exp(-0.2*\nnn-0.1*\x))});

\foreach \nnn in {0,...,5}
    \draw[domain=0:0.5,color=green!80!blue, very thick ] plot 
        ({\nnn+\x+0.5},{(2-\x+1.3*\x*\x)*exp(-0.2*\nnn-0.1*(\x+0.5))});

\foreach \nnn in {0,...,5} \draw[color=blue!80!red] (\nnn,-0.3) 
                          -- node[pos=0.5,below]{1}  (\nnn+0.5,-0.3) ;

\foreach \nnn in {0,...,5} \draw[color=green!80!blue] (\nnn+.5,-0.4) 
                          -- node[pos=0.5,below]{2}  (\nnn+1.0,-0.4) ;

\node at (2.5,1.7) {$g(t)$};
\end{tikzpicture} 
\qquad
\begin{tikzpicture}
\draw[very thick, -> =triangle45] (0,0)--(6.3,0) node[above]{$t$};
\foreach \x in {0,...,6} \draw (\x,0.2)--(\x,-0.2);
\foreach \x in {0,...,5} \draw[blue!50!red] (0.5+\x,0.15)--(0.5+\x,-0.15);
\node at (0,0.2) [above]{$0$};
\node at (0.5,0.2) [above,blue!50!red]{$\frac12\tau$};
\node at (1,0.2) [above]{$\tau$};
\node at (1.5,0.2) [above,blue!50!red]{$\frac32\tau$}; 
\node at (2,0.2) [above]{$2\tau$};

\foreach \nnn in {0,...,5}
    \draw[domain=0:0.5,color=blue!80!red, very thick ] 
          plot ({\nnn+\x},{2*exp(-0.2*\nnn-0.1*\x))});

\foreach \nnn in {0,...,5}
         \draw[domain=0:0.5,color=blue!80!red, very thick ] 
              plot ({\nnn+\x+0.5},{2*exp(-0.2*\nnn-0.1*\x))});

\node at (3.5,1.6) {$\big(T^{(1)}_\tau g\big)(t)$};

\node[color=white] at (0,-0.8){$\ $};
\end{tikzpicture} 
\caption{Left: Schematic sketch of a function $g$ generated by a split-step approach
  with equidistant partition. Right: The repetition  operator
  $\TT^{(1)}_\tau$  
  selects the left semi-intervals and repeats them in the right
  semi-intervals.}\label{fig:Repetition} 
\end{figure}

The following result collects some properties of the operators
$\TT^{(j)}_\bftau $: though straightforwardly checked, they will play a key
role in our analysis. Ahead of the statement, we recall that the
convergence in the space $\rmC^0([0,T]; \Spgxw)$ is, by definition, given by
the convergence of $\rmC^0([0,T]; (\Spgx,d_\mathrm{weak}))$, where the metric
$d_\mathrm{weak}$ induces the weak topology on a closed bounded set of the
reflexive space $\Spgx$.
 
\begin{lemma}
\label{lem:repetition-operator}
For $j\in \{1,2\}$ we have  the following properties: 
\begin{enumerate}
\item For all $\bftau\in\Lambda$  we have  
\[
  \| \TT^{(j)}_\bftau \|_{\mathrm{Lin}} \leq 2\, , 
  \]
   where
  $\| \cdot \|_{\mathrm{Lin}}$ is the norm of
  $\mathrm{Lin} ( \rmL^1([0,T];\Spgx); \rmL^1([0,T];\Spgx))$.

\item $\TT^{(j)}_\bftau g \to g$ for every $g\in \rmL^1([0,T];\Spgx)$ as
  $|\bftau|\to 0$.
\item For any family $(g_{\bftau})_{\bftau} \subset \rmC^0([0,T]; \Spgxw)$,
   we have   
  \begin{align}
    \label{ContinuityT.}
 \hspace{-0,5cm}     g_{\bftau} \to g \text{ in } \rmC^0([0,T]; \Spgxw) \text{
   as } |\bftau|\to 
    0 \ \Longrightarrow \ \TT^{(j)}_\bftau g_{\bftau} \to g  \text{ in }
    \rmL^\infty ([0,T]; \Spgxw) \,. 
  \end{align}
\item For any family $(g_{\bftau})_{\bftau} \subset \rmL^1([0,T]; \Spgx)$, 
  we have, in the limit $|\bftau|\to 0 $,   
  \begin{align}
   \label{ContinuityT.b}
 \hspace{-0,5cm}  g_{\bftau} \weakto g \text{ in }  \rmL^1([0,T]; \Spgx) 
  \ \ \Longrightarrow \ \ \frac12 \left( \TT^{(1)}_\bftau {+}
     \TT^{(2)}_\bftau \right) (g_{\bftau}) \weakto g \text{ in }  \rmL^1([0,T];
   \Spgx)\,.  
 \end{align}
\end{enumerate}
\end{lemma}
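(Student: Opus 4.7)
The plan for parts (1)--(3) is to exploit the piecewise structure of $\TT^{(j)}_\bftau$ directly. For (1), I observe that on each cell $I_k=(\dis t\bftau{k-1},\dis t\bftau k]$ the operator $\TT^{(1)}_\bftau$ duplicates $g|_{\lint{\bftau}{k}}$ onto $\rint{\bftau}{k}$, so $\int_{I_k}\|\TT^{(1)}_\bftau g\|\dd t\leq 2\int_{\lint{\bftau}{k}}\|g\|\dd t$; summing over $k$ yields $\|\TT^{(1)}_\bftau\|_{\mathrm{Lin}}\leq 2$, and the case $j=2$ is symmetric. For (2), I first verify $\TT^{(j)}_\bftau g\to g$ uniformly for continuous $g$, using that the shift built into $\TT^{(j)}_\bftau$ has amplitude at most $\ttau/2$ and $g$ is uniformly continuous on $[0,T]$; density of continuous functions in $\rmL^1$ combined with the uniform bound from (1) then gives the general case. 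For (3), equip $\Spgxw$ with a metric $d_{\mathrm{w}}$ metrizing weak convergence on bounded sets, so the limit $g$ is uniformly weakly continuous on the compact interval $[0,T]$; the triangular split
\[
d_{\mathrm{w}}\bigl((\TT^{(j)}_\bftau g_\bftau)(t),g(t)\bigr) \leq d_{\mathrm{w}}\bigl((\TT^{(j)}_\bftau g_\bftau)(t),(\TT^{(j)}_\bftau g)(t)\bigr) + d_{\mathrm{w}}\bigl((\TT^{(j)}_\bftau g)(t),g(t)\bigr)
\]
bounds the first summand by $\sup_s d_{\mathrm{w}}(g_\bftau(s),g(s))\to 0$ and the second by the modulus of weak continuity of $g$ at scale $\ttau/2$; both vanish uniformly in $t$.

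Part (4) is the real substance. The pivotal exact identity, obtained by the change of variable $s=t\pm\tau_k/2$ in the definition of $\TT^{(j)}_\bftau$, is the \emph{cell-wise mean preservation}
\[
\int_{I_k} \tfrac12\bigl(\TT^{(1)}_\bftau{+}\TT^{(2)}_\bftau\bigr)(g_\bftau)\dd t \;=\; \int_{I_k} g_\bftau\dd t \qquad\text{for every cell }I_k.
\]
Set $h_\bftau:=\tfrac12(\TT^{(1)}_\bftau{+}\TT^{(2)}_\bftau)g_\bftau$. From $g_\bftau\weakto g$ in $\rmL^1$ and the Dunford--Pettis theorem, the family $\{g_\bftau\}$ is equi-integrable; the same change of variable shows that for every measurable $A\subset[0,T]$,
\[
\int_A \|\TT^{(j)}_\bftau g_\bftau\|\dd t \leq 2\sup_{|B|\leq |A|}\int_B\|g_\bftau\|\dd t,
\]
so equi-integrability transfers to $\{h_\bftau\}$. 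For any continuous test $\varphi\in\rmC^0([0,T];\Spgx^*)$ let $\bar\varphi_\bftau$ be its cell-wise integral average. By uniform continuity, $\|\varphi-\bar\varphi_\bftau\|_{\rmL^\infty}\to 0$; since $\bar\varphi_\bftau$ is constant on each $I_k$, the mean-preservation identity gives
\[
\int_0^T \langle h_\bftau-g_\bftau,\varphi\rangle\dd t \;=\; \int_0^T \langle h_\bftau-g_\bftau,\varphi-\bar\varphi_\bftau\rangle\dd t,
\]
whose modulus is controlled by $\|h_\bftau-g_\bftau\|_{\rmL^1}\,\|\varphi-\bar\varphi_\bftau\|_{\rmL^\infty}\to 0$. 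Combined with $g_\bftau\weakto g$, this delivers $\int\langle h_\bftau,\varphi\rangle\to\int\langle g,\varphi\rangle$ for every continuous $\varphi$.

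To upgrade this to weak convergence in $\rmL^1([0,T];\Spgx)$, I invoke the weak relative compactness of $\{h_\bftau\}$ granted by its equi-integrability (Dunford--Pettis). Any weak limit $h$ along a subsequence satisfies $\int\langle h,\varphi\rangle=\int\langle g,\varphi\rangle$ for all continuous $\varphi$; specializing to $\varphi(t)=\rho(t)\xi$ with $\rho\in\rmC^0([0,T])$ arbitrary and $\xi$ ranging over a countable dense subset of $\Spgx^*$ (separable because $\Spgx$ is separable and reflexive) forces $\langle\xi,h(t)\rangle=\langle\xi,g(t)\rangle$ a.e., hence $h=g$ a.e. Uniqueness of the limit then yields convergence of the whole family. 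The chief technical obstacle is precisely this final identification step: since continuous functions are not norm-dense in $\rmL^\infty$, one cannot directly read off weak $\rmL^1$-convergence from convergence against continuous test functions alone, and Dunford--Pettis compactness is essential to bridge the gap.
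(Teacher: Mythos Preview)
Your proof is correct and follows the same overall architecture as the paper's for parts (1)--(3) and for the Dunford--Pettis compactness and final identification in part (4). The one genuine difference is in the core identity for (4): the paper computes the full adjoint and shows that $\TT^{(0)}_\bftau:=\tfrac12(\TT^{(1)}_\bftau{+}\TT^{(2)}_\bftau)$ is \emph{self-adjoint} on $\rmL^1\times\rmL^\infty$, then uses $\TT^{(0)}_\bftau\phi\to\phi$ uniformly for continuous $\phi$ (together with $\TT^{(0)}_\bftau g\to g$ in $\rmL^1$) to conclude $\int\langle\phi,\TT^{(0)}_\bftau g_\bftau-g\rangle\to 0$. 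You instead isolate the special case of self-adjointness tested against cell-wise constants---your mean-preservation identity $\int_{I_k}h_\bftau=\int_{I_k}g_\bftau$---and pair it with the piecewise-constant approximation $\bar\varphi_\bftau$ of the test function. Your route is slightly more elementary (no adjoint calculation, only a change of variable on each half-cell), while the paper's self-adjointness statement is a bit more structural and would transfer to other averaged repetition operators. Both arrive at convergence against continuous tests and then close via Dunford--Pettis compactness plus the same separability argument you give; neither can avoid that last step, for exactly the reason you flag.
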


The  proof of Lemma \ref{lem:repetition-operator}  will be carried out in
Appendix \ref{s:appB}.

\subsubsection*{The time-splitting algorithm}
 We recall the rescaled  dissipation potentials
$\tdis j: \Spx j \to [0,\infty)$  given by  
\begin{equation}
\label{tilde-dissipations}
\tdis j(v) : = 2 \calR_j\left( \tfrac 1{2} v \right) \ 
\text{ with conjugates } \ \tdis{j}^* (\xi) = 2 \calR_j^* (\xi) \quad \text{for
  } j\in \{1,2\}\,.
\end{equation}   
For $k \in \{ 1, \ldots, N_\bftau\}$, we  consider the Cauchy problems
associated with $(\Spx j,\calE,\tdis j)$: 
\begin{enumerate}
\item the Cauchy problem  for $(\Spx 1,\calE,\tdis 1)$  with  some
  initial datum $\underline{u} \in \domE_0$:
\begin{equation}
\label{Cauchy-pb-1}
\left\{ 
\begin{array}{r@{\:}c@{\:}l}
\partial \tdis 1(u'(t)) + \frsub t{u(t)}  &\ni& 0 \  \text{ in } \Spx 1^*
\quad \foraa\ t \in   \lint{k}{\bftau} \,,
\smallskip
\\
u(\dis t{\bftau}{k-1})&=& \underline{u} \,. 
\end{array}
\right.
\end{equation}
\item 
 the Cauchy problem  for $(\Spx 2,\calE,\tdis 2)$  with some
  initial datum $\ol{u} \in \domE_0$:

\begin{equation}
\label{Cauchy-pb-2}
\left\{ 
\begin{array}{r@{\:}c@{\:}l}
\partial \tdis 2(u'(t)) +  \frsub t{u(t)} &\ni & 0 \ \text{ in } \Spx 2^*
\quad \foraa\ t \in   \rint{k}{\bftau} \,,
\smallskip
\\
u ( \thalf )&=& \overline{u}\, ,
\end{array}
\right.
\end{equation}
(cf.\ \eqref{thalf} for the definition of $\thalf$).
\end{enumerate}
Thanks to Theorem \ref{exist:gflows}, both Cauchy problems admit a solution.
We are now in a position to detail the time-splitting algorithm.  Starting from
an initial datum $u_0 \in \domE_0$,  we recursively define the 
approximate  solution $\pws U\bftau: [0,T]\to \domE_0$ in the following
way:
\begin{equation}
\label{def-pws}
\begin{aligned}
  & \pws U\bftau(0): = u_0 \qquad \text{ and, for } t \in (\dis t \bftau {k-1},
  \dis t \bftau k ] \text{ with } k =1, \ldots, N_\bftau, \text{ we define}
  \hspace*{-2em} &&
  \\
  & \pws U\bftau \text{ as a solution of Cauchy problem \eqref{Cauchy-pb-1}
    with $\underline{u} = \pws U\bftau(\dis
    t\bftau{k-1})$} 
  && \text{on } \lint k{\bftau},
  \\
  & \pws U\bftau \text{ as a solution of Cauchy problem \eqref{Cauchy-pb-2}
    with
    $\overline{u} = \pws U\bftau\left(\dis t\bftau{k-1}{+}\tfrac{\tau_k}
      2\right)$} && \text{on } \rint k{\bftau}\,.
\end{aligned}
\end{equation}
By construction, we have $\pws U{\bftau} \in \AC([0,T];\Spx 1)$.

\subsubsection*{Our main convergence result}

We will prove the convergence of (a subsequence of) the family of curves
$(\pws U{\bftau} )_{\bftau}$ to a solution $U$ of the Cauchy problem for the
generalized gradient system \eqref{effective-dne}, under the additional
\emph{singleton condition} on $\frnameopt 2$. We mention in advance that the
fact that $\frsubopt 2tu$ is a singleton does not imply Fr\'echet
differentiability of $\calE$ at $(t,u)$, cf.\ e.g.\ the counterexample in
\cite[\S\,1.3, p.\,90]{Mordu-I}.

\begin{hypothesis}[Singleton condition]
\label{hyp:singleton}
We assume that 
\begin{equation}
\label{singleton}
\frsubopt  2tu \text{ is a singleton for all } (t,u)\in [0,T]\ti \domE_2.
\end{equation}
\end{hypothesis}
Obviously, due to \eqref{relation-subdiff}, Hypothesis \ref{hyp:singleton}
implies that, whenever it is non-empty, $\frsubopt 1tu$ is also a singleton and
coincides with $\frsubopt 2tu$.  Indeed, in Section\ \ref{ss:3.2} we will
present a counterexample to convergence of the split-step scheme, in the case
of an energy with multi-valued Fr\'echet subdifferentials,  i.e.\ the
singleton condition fails.  

Our main convergence result states the convergence of (a subsequence of) the
curves $(\pws U{\bftau} )_{\bftau}$, as $|\bftau| \downarrow 0$, to a solution
of the Cauchy problem for \eqref{Cauchy-pb-eff}.  We highlight that we even
have convergence of the `repeated velocities'
$(\TT^{(j)}_\bftau \pws U{\bftau}')_{\bftau}$, to the \emph{optimal} velocities
contributing to $\calR_\eff(U')$.

\begin{theorem}[Convergence of time-splitting method]
\label{th:conv-split-step}
In addition to the assumptions of Theorem \ref{exist:gflows}, assume  the
ordering condition \eqref{eq:ordering} and  the singleton condition of
Hypothesis \ref{hyp:singleton}.  Starting from an initial datum
$u_0 \in \domE_0$, define the curves $(\pws U{\bftau})_{\bftau \in \Lambda}$ as
in \eqref{def-pws}.  Then, for any sequence $(\bftau_n)_n$ with
$\lim_{n\to\infty} |\bftau_n | =0$ there exist a (non-relabeled) subsequence, a
curve $U \in \AC([0,T];\Spx 1)$, some $E>0$, and $V_j \in \rmL^1 ([0,T];\Spx
j)$ for $j =1,2$ such that $U(0)=u_0$, \  $U(t)\in\domE_1 \cap \subl E $ for all
$t\in [0,T]$, 
\begin{subequations}
\label{cvgs-Thm43}
\begin{align}
\label{pointwise-cvg}
&
 \pws U{\bftau_n}(t) \weakto U(t)  &&  \text{  in } \Spx 1  \quad \text{for
   all } t \in [0,T],  
 \\
 &  \label{cvg-repeated-velocities}
  \frac12  \TT^{(j)}_{\bftau_n}(\pws U{\bftau_n}') \weakto V_j  && \text{ in } \rmL^1([0,T];\Spx j) \quad \text{for } j=1,2,
 \\
 &  \label{cvg-U'}
  \pws U{\bftau_n}' \weakto U'  =V_1{+} V_2  && \text{ in } \rmL^1([0,T];\Spx 1),
 \end{align}
\end{subequations}
 and there exists a function
 $\xi \in\rmL^1([0,T];\Spx 1^*)$ such that the pair $(U,\xi)$ solves the subdifferential inclusion 
 \begin{equation}
 \label{solving-the-equn}
 \begin{cases}
 \displaystyle  \partial \calR_\eff (U'(t)) + \xi(t) \ni0 
 \\
  \displaystyle \frsubopt 1t{U(t)} = \{\xi(t)\} 
 \end{cases}
 \qquad \text{in } \Spx 1^* \quad  \text{for a.a.\ $t\in (0,T)$}
 \end{equation} 
and  fulfills the energy-dissipation balance, for every  $0 \leq s \leq t \leq T$,
\begin{equation}
\label{EDB-effect}
\en t{U(t)} + \int_s^t \!\!\big(  \calR_\eff (U'(r)){+} \calR_\eff^*({-}\xi(r)) 
 \big)   \dd r  = \en s{U(s)} +\int_s^t \!\! \pet r{U(r)} \dd r  \,.
\end{equation}

Moreover, $(V_1,V_2) $ provides an \emph{optimal decomposition} for $U'$, namely
\begin{equation}
 \label{optimal-decomposition}
 \left.
 \begin{array}{@{}c@{}}
  \Vopt 1(t) + \Vopt 2(t) = U'(t)
 \smallskip
 \\
 \calR_1(\Vopt 1(t)) {+}  \calR_2(\Vopt 2(t)) = \calR_\eff (U'(t)) =
 \min\limits_{v_1+v_2=v} \calR_1(v_1){+} \calR_2(v_2) 
 \end{array}
 \right\}
  \ \foraa\  t \in (0,T)\,.
\end{equation}	
\end{theorem}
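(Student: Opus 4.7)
The approach is to derive a concatenated energy-dissipation identity for each approximate curve $\pws U\bftau$, pass to the liminf as $|\bftau|\downarrow 0$ to obtain the upper energy-dissipation estimate \eqref{EDB-UE} for a candidate pair $(U,\xi)$ relative to the generalized gradient system $(\Spx 1,\calE,\calR_\eff)$, and then invoke Proposition \ref{p:EDP} to upgrade that estimate into the subdifferential inclusion \eqref{solving-the-equn} and the balance \eqref{EDB-effect}. On each semi-interval $\lint k\bftau$ (resp.\ $\rint k\bftau$) the restriction of $\pws U\bftau$ solves \eqref{Cauchy-pb-1} (resp.\ \eqref{Cauchy-pb-2}), so Theorem \ref{exist:gflows} furnishes a measurable selection $\xi_\bftau(t)$ of $\frnameopt 1$ (resp.\ $\frnameopt 2$) for which the local energy-dissipation balance holds. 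Concatenating these yields a global identity
\[
\en t{\pws U\bftau(t)} + \mathcal D_\bftau^{\mathrm{rate}}([s,t]) + \mathcal D_\bftau^{\mathrm{slope}}([s,t]) = \en s{\pws U\bftau(s)} + \int_s^t \pet r{\pws U\bftau(r)}\,\dd r,
\]
with $\mathcal D_\bftau^{\mathrm{rate}}$ and $\mathcal D_\bftau^{\mathrm{slope}}$ as in \eqref{rate-term-INTRO}--\eqref{slope-term-INTRO} (no remainder appears, since one works with exact local solutions rather than a minimizing-movement scheme). Combining $\mathbf{<E>}$ with Gr\"onwall gives $\pws U\bftau(t)\in\subl E$ uniformly in $\bftau,t$, and superlinearity of $\calR_j,\calR_j^*$ combined with $\mathbf{<QYE>}$ yields equi-integrability of $\pws U\bftau'$ in $\rmL^1([0,T];\Spx 1)$, of $\tfrac12\TT^{(j)}_\bftau\pws U\bftau'$ in $\rmL^1([0,T];\Spx j)$, and of $\xi_\bftau$ in $\rmL^1([0,T];\Spx 1^*)$.

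A Helly/Arzel\`a--Ascoli argument (equicontinuity from the $\rmL^1$-bound on $\pws U\bftau'$, compactness from weak compactness of $\subl E$ in $\Spx 1$) yields, along a non-relabeled subsequence, the pointwise convergence \eqref{pointwise-cvg} to some $U\in\AC([0,T];\Spx 1)$ with $U(t)\in\subl E$. Dunford--Pettis delivers weak $\rmL^1$-limits $U'$, $V_1$, $V_2$, $\xi_\infty$; Lemma \ref{lem:repetition-operator}(4) identifies $V_1{+}V_2=U'$, i.e.\ \eqref{cvg-U'}. The \emph{rate term} is handled following the computation sketched in the Introduction: Jensen's inequality on each half-subinterval, applied separately to $\calR_1$ and $\calR_2$, together with the definition of $\calR_\eff$ as inf-convolution (Lemma \ref{l:Reff}), yields $\mathcal D_\bftau^{\mathrm{rate}}([0,T]) \geq \int_0^T \calR_\eff(\pwl U\bftau'(r))\,\dd r$ for the piecewise affine nodal interpolant $\pwl U\bftau$. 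Since $\pwl U\bftau' \weakto U'$ in $\rmL^1([0,T];\Spx 1)$ and $\calR_\eff$ is convex and lsc, weak lower semicontinuity of the integral functional gives $\liminf_\bftau \mathcal D_\bftau^{\mathrm{rate}} \geq \int_0^T\calR_\eff(U')\,\dd r$.

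The \emph{slope term} is handled through the key identity
\[
\mathcal D_\bftau^{\mathrm{slope}}([0,T]) = \int_0^T \calR_1^*\bigl({-}\TT^{(1)}_\bftau\pwc\xi_\bftau\bigr)\,\dd r + \int_0^T \calR_2^*\bigl({-}\TT^{(2)}_\bftau\pwc\xi_\bftau\bigr)\,\dd r,
\]
which replaces a sum of products of two weakly converging factors by two genuinely convex lsc terms each with a single weakly converging argument. By construction $\TT^{(j)}_\bftau\pwc\xi_\bftau$ evaluates to a selection of $\frnameopt j$ at endpoints of semi-intervals; combined with $\pws U\bftau\weakto U$ in $\Spx 1$, Hypothesis \ref{hyp:en-2} produces weak $\rmL^1$-limits $\xi_j(t)\in\frsubopt jt{U(t)}$ for a.e.\ $t$. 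Here the singleton condition (Hypothesis \ref{hyp:singleton}) is invoked decisively: via \eqref{relation-subdiff} it forces $\xi_1(t)=\xi_2(t)\in\Spx 1^*$, and this common value $\xi(t)$ must coincide with $\xi_\infty(t)$. Convex lower semicontinuity plus Lemma \ref{l:Reff}(1) then give $\liminf_\bftau \mathcal D_\bftau^{\mathrm{slope}} \geq \int_0^T \calR_\eff^*({-}\xi(r))\,\dd r$.

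Finally, Hypothesis \ref{h:2} enables passage to the limit in the energy and in the power term, producing the upper energy-dissipation estimate \eqref{EDB-UE} for $(U,\xi)$ relative to $(\Spx 1,\calE,\calR_\eff)$. Hypothesis \ref{h:chain-rule} together with Hypothesis \ref{hyp:QYE} (the latter for $\calR_\eff$, cf.\ Appendix \ref{s:QYE}) then allows Proposition \ref{p:EDP} to convert \eqref{EDB-UE} into the subdifferential inclusion \eqref{solving-the-equn} and the energy-dissipation balance \eqref{EDB-effect}. The equality in \eqref{EDB-effect} forces the two liminf inequalities above to be attained as equalities; tracing back the equality case in Jensen's inequality recovers the optimal decomposition \eqref{optimal-decomposition} with $V_j=\Vopt j$. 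The main obstacle is precisely the slope-term step: the sequences $\TT^{(j)}_\bftau\pwc\xi_\bftau$ live a priori in the different duals $\Spx j^*$ and may converge weakly to distinct limits; only the singleton condition merges them into a single $\xi\in\Spx 1^*$ and allows reconstruction of $\calR_\eff^*({-}\xi)$. As shown by the counterexample in Section \ref{ss:3.2}, this condition cannot be dispensed with.
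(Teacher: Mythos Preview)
Your overall strategy matches the paper's: concatenate the local energy-dissipation balances, extract compactness, establish separate liminf estimates for the rate and slope terms, and close via Proposition \ref{p:EDP}. Two steps, however, need repair.

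For the slope term you write that Hypothesis \ref{hyp:en-2} directly yields $\xi_j(t)\in\frsubopt j t{U(t)}$ for the weak $\rmL^1$-limits $\xi_j$ of $\TT^{(j)}_{\bftau_n}\xi_{\bftau_n}$. But that hypothesis is a closedness condition along sequences converging \emph{pointwise} weakly in $\Spx1\times\Spx j^*$, and weak $\rmL^1$-convergence does not furnish pointwise limits. The paper closes this gap with a Young-measure argument (Section \ref{ss:slope}): the limiting Young measure $\mu_t^j$ is supported in the set of weak-$\Spx j^*$ cluster points of $(\TT^{(j)}_{\bftau_n}\xi_{\bftau_n}(t))_n$, which by Hypothesis \ref{hyp:en-2} (combined with $\TT^{(j)}_{\bftau_n}\pws U{\bftau_n}\to U$ in $\rmL^\infty([0,T];\Spxw1)$) lies in $\frsubopt j t{U(t)}$; the barycenter $\xi_j(t)$ then inherits this inclusion because the subdifferential is convex---indeed a singleton. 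Without this device the step is a genuine gap. A related slip: your ``$\xi_\infty$'' as a weak $\rmL^1([0,T];\Spx1^*)$-limit of $\xi_\bftau$ is ill-posed, since on the right semi-intervals $\xi_\bftau$ takes values only in $\Spx2^*$.

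For the optimal decomposition \eqref{optimal-decomposition} you invoke ``the equality case in Jensen's inequality,'' but Jensen was applied to $\pws U\bftau'$ on half-subintervals, and its equality case concerns the approximants, not $V_1,V_2$. The paper's actual rate-term argument (Section \ref{ss:vel}) bypasses Jensen entirely: it rewrites $\mathcal D_\bftau^{\mathrm{rate}}([0,T])=\int_0^T\{\calR_1(\tfrac12\TT^{(1)}_\bftau\pws U\bftau')+\calR_2(\tfrac12\TT^{(2)}_\bftau\pws U\bftau')\}\,\dd r$ via \eqref{rephrasing-via-repet-a} and lower-semicontinues each summand separately to $\int_0^T\calR_j(V_j)\,\dd r$. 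Once the energy-dissipation principle forces the overall liminf to equal $\int_0^T\calR_\eff(U')\,\dd r$, the pointwise inequality $\calR_\eff(U')\le\calR_1(V_1)+\calR_2(V_2)$ (from $V_1{+}V_2=U'$) upgrades to equality a.e. Your Jensen route (the heuristic from the Introduction) gives a valid liminf bound, but since it passes through a coarser interpolant rather than through $V_1,V_2$, it cannot by itself deliver \eqref{optimal-decomposition}.
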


The proof of Theorem \ref{th:conv-split-step} will be carried out in Section
\ref{s:4}. We only mention that, as suggested by Proposition \ref{p:EDP}, our
argument for proving that the curves $(\pws U{\bftau_n})_n$ converge to a
solution of \eqref{solving-the-equn} will be tightly related to the proof of
the energy-dissipation balance \eqref{EDB-effect}. As a by-product of this, in
Section\ \ref{s:4} we will obtain enhanced convergence information for
$(\pws U{\bftau_n})_n$, in addition to the convergences \eqref{cvgs-Thm43}. In
order to state these properties, we need to bring into play the two
integral terms that will encode the dissipative contributions in the
approximate version of \eqref{EDB-effect} (cf.\ \eqref{EDB-approx}
ahead). Their definition features the characteristic function of (the union of)
the  semi-intervals  $\lint{k}{\bftau}$, i.e.\
\begin{equation}
\label{charactetistic-functions}
\begin{aligned}
& 
\chi_ \bftau: (0,T)  \to \{0,1\}  && \quad  \chi_ \bftau(t): =  \left
\{ 
\begin{array}{ll}
1 & \text{if } t \in \lint{\mt{\bftau}t}{\bftau},
\\
0 & \text{otherwise.}
\end{array}
\right.
\end{aligned}
\end{equation}
 Recalling $\tdis j= 2\calR_j(\frac12\,\cdot)$ and $\tdis j^*=2\calR_j^*$, on
subintervals  $[s,t]\subset [0,T]$ we introduce
\begin{subequations}
\label{rate+slope-terms}
\begin{enumerate}
\item the \emph{rate term}   
\begin{equation}
\label{rate-term}
\mathcal{D}_{\bftau}^{\mathrm{rate}} ([s,t]) := 
 \int_{s}^t\big(\,
\chi_{\bftau} (r)\, \tdis 1 \left(\pws U{\bftau}'(r)\right) +
(1{-}\chi_{\bftau}(r) ) \, \tdis 2 \left(\pws U{\bftau}'(r)\right) \big) \dd r 
\end{equation}
\item and the \emph{slope term}
\begin{equation}
\label{slope-term}
\mathcal{D}_{\bftau}^{\mathrm{slope}} ([s,t]) : =    \int_{s}^t
\big(\, \chi_{\bftau}(r)\, \tdis1^* ({-} \xi_ {\bftau} (r))  +
(1{-}\chi_{\bftau}(r) )\,\tdis2^* ({-} \xi_ {\bftau} (r)) \,  \big)  \dd r \,.
\end{equation}
\end{enumerate}
\end{subequations}
\par Now, we are in a position to state the enhanced convergences
that we will succeed in proving for the sequence  $(\pws U{\bftau_n})_n$,
namely
\begin{subequations}
\label{enh-cvg}
\begin{align}
& 
\en t{\pws U{\bftau_n}(t)}  \longrightarrow 
\en t{U(t)} && \text{for all } t \in [0,T],
\\
&
\left\{ 
\begin{array}{ll}
\displaystyle \mathcal{D}_{\bftau_n}^{\mathrm{rate}} ([s,t])   \longrightarrow   \int_s^t \calR_\eff (U'(r)) \dd r, 
\\
\displaystyle 
\mathcal{D}_{\bftau_n}^{\mathrm{slope}} ([s,t])  \longrightarrow    \int_s^t 
 \calR_\eff^*({-}\xi(r)) \dd r
 \end{array}
 \right.
 && \text{for all } [s,t]\subset [0,T]\,.
 \end{align}
\end{subequations}
In fact, for the velocities we will even obtain the following, more precise,
convergence statement along any interval $[s,t]\subset [0,T]$
\begin{equation}
 \label{optimal-rates}
   \int_s^t \calR_j(\tfrac12 \TT^{(j)}_{\bftau_n}\pws U{\bftau_n}'(r))  
\longrightarrow  \int_s^t \calR_j(\Vopt j(r))  \dd r \qquad \text{for } j =1,2.
 \end{equation}

\section{Two  examples} 
\label{s:examples}

In the upcoming Section \ref{ss:3.2} we exhibit a counterexample to Theorem
\ref{th:conv-split-step} in the case in which the Fr\'echet subdifferential
does not comply with the singleton condition from Hypothesis
\ref{hyp:singleton}. In Section\ \ref{ss:5.2} we provide two gradient systems
$(\Spx 1, \calE, \calR_1) $ and $(\Spx 2, \calE, \calR_2) $ fulfilling all
assumptions of Theorem\ \ref{th:conv-split-step}.  

\subsection{Non-convergence for multi-valued $\pl\calE$}
\label{ss:3.2}

Following \cite[Section\,3.1]{Miel22RRIS} we consider the simple case 
$\Spx 1=\Spx 2=\R^2$  and a one-homogeneous energy
$\calE(t,u) =  \calE(u)=\max\{|u_1|,|u_2|\}$. Clearly, $\calE$ is
convex and its convex subdifferential is multi-valued, with 
\[
\pl\calE(u)= \begin{cases} 
  \{\pm (1,0)^\top\}&\text{for } \pm u_1 >|u_2|, \\
  \{\pm (0,1)^\top\}&\text{for } \pm u_2 >|u_1|, \\
  \set{\pm (\theta,1{-}\theta)^\top }{\theta\in [0,1]} 
          &\text{for } u_1=u_2, \ \pm u_1>0,\\ 
  \set{\pm (\theta,\theta{-}1)^\top }{\theta\in [0,1]} 
          &\text{for } u_1=-u_2, \ \pm u_1>0,\\ 
  [-1,1]\ti[-1,1] &\text{for } u=0.
              \end{cases}
\]
For a general dissipation potential of the form
\begin{equation}
\label{gen-disspot-EX}
\calR^*(\xi)= \frac a2 \xi_1^2 + \frac b2 \xi_2^2
\end{equation}
we can solve the gradient-flow equation for $(\R^2,\calE,\calR^*)$ and obtain a
contraction semigroup on the Hilbert space $\R^2$ (cf.\
\cite[Theorem\,3.1]{Brez73OMMS}).  We consider the solution with the initial
condition $u^0=u(0)=(2,1)^\top$, which leads to the piecewise affine solution 
\[
\begin{cases}
u(t)=\binom{2{-}at}{1}  &  \text{ for }t\in [0,\tfrac 1a], 
\smallskip
\\ 
u(t)=
(1-(t{-}\tfrac1a)\frac{ab}{a{+}b}) \binom11  & \text{ for }t\in {\big]\tfrac1a,\tfrac2a+\tfrac1b\big[},
\smallskip
\\
 u(t)=\binom00 &\text{ for } t\geq \tfrac2a+\tfrac1b.
 \end{cases}
\]  
Note that in the middle regime the solution satisfies $u_1=u_2>0$ and hence the
subdifferential $\pl\calE(u(t))$ is set-valued. This multi-valuedness is
necessary as the choice for $(\theta,1{-}\theta)\in \pl\calE(u)$ indeed depends
on $\calR^*$, namely $\theta=b/(a{+}b)$. 
For later use, it is nice to observe  that $- u_1'(t)$ only takes
three values, namely $a$, $ab/(a{+}b)$, and $0$.

We now apply the split-step algorithm for the two dissipation potentials 
\[
\calR_1^*(\xi)= \frac{a_1}2 \xi_1^2 + \frac{b_1}2 \xi_2^2 \quad \text{and} \quad 
\calR_2^*(\xi)= \frac{a_2}2 \xi_1^2 + \frac{b_2}2 \xi_2^2.
\]
Clearly, we have the effective potentials 
\[
\calR_\eff^*(\xi)=\frac a2 \xi_1^2 + \frac b2 \xi_2^2 \ \text{ and } \ 
\calR_\eff(\xi)=\frac 1{2a} \xi_1^2 + \frac 1{2b} \xi_2^2 \ \text{ with  }a =
a_1{+}a_2 \text{ and } b=b_1{+}b_2. 
\]
The three gradient systems $(\R^2,\calE, \calR^*_\eff)$,
$(\R^2,\calE, 2\calR^*_1)$, and $(\R^2,\calE, 2\calR^*_2)$ have the same form
as the gradient system $(\R^2,\calE, \calR^*)$ with the general potential
$\calR$ from \eqref{gen-disspot-EX}.  Hence, the obtained solutions are again
piecewise affine and $V:=- u'(t)$ only takes three values.

In the first regime $u_1(t)> |u_2(t)|$ we have the three velocities:
\[
\ts
\calR^*_\eff:\ V_\eff=\binom{a_1{+}a_2}0, \quad 2\calR^*_1: \ V_1=\binom{2a_1}0,  
\quad 2\calR^*_2: \ V_2=\binom{2a_2}0. 
\]
Hence, we observe that the split-step solutions $u_\tau$ oscillate between the
two velocities $V_1$ and $V_2$ in such a way that the weak limit of $- u_\tau'$ 
equals $\frac12(V_1{+}V_2)=\binom{a_1{+}a_2}0$. Hence, in this regime (where
$\pl\calE(u)$ is single-valued), we have convergence of $u_\tau$ to $u_\eff$ as
$V_\eff= \frac12(V_1{+}V_2)$. 

However, for $t\in {\big] \frac1a,\frac2a+\frac1b\big[}$ the situation is
different, because for $u_1(t)=u_2(t)>0$ the subdifferential is
multi-valued. The three velocities are
\[
 \ts
\calR^*_\eff:\ V_\eff=\frac{ab}{a{+}b}\binom11, \quad 2\calR^*_1: \
V_1 = \frac{2a_1b_1}{a_1{+}b_1} \binom11,  \quad 2\calR^*_2: \ 
V_2 = \frac{2a_2b_2}{a_2{+}b_2} \binom11. 
\]
Clearly, for $t\in {\big] \frac1a,\frac2a+\frac1b\big[}$ we obtain 
\[
\ts
u_\tau(t) \to u_{\lim}(t):=\binom11-(t{-} \tfrac1a)\frac{1} 2 \big(V_1{+}V_2\big), \quad
\text{while } u_\eff(t)= \binom11-(t{-} \tfrac1a)\,V. 
\]
We always have $|V|\geq \big|\frac12(V_1{+}V_2) \big| $ but in general with a
strict inequality, e.g.\ for $(a_1,b_1)=(1,3)$ and $(a_2,b_2)=(3,1)$ we have
$a=b=4$ and obtain $V_\eff = \binom22$ and $V_1=V_2= \binom{3/2}{3/2}$.

We observe that the effective solution has a higher speed and is reaching
$u(t)=0$ already at $t=1/4+1/2=0.75$. However, the split-step solution
$u_\tau$, which is actually not oscillating with $\tau$ because of $V_1=V_2$,
is slowed down as it reaches $u_\tau(t)=0$ only for
$t=1/4+2/3\approx 0.917$.

\begin{remark} In this example one can follow the limiting procedure in the 
energy-dissipation balance. One observes that the liminf estimate for the
velocities always works. However, because of the limiting rate being too
small, there is a true drop. With $t_1=1/a$ and $t_2=2/a+ 1/b$ and
$(a_1,b_1)=(1,3)$ and $(a_2,b_2)=(3,1)$ we have 
\[
\frac1{t_2{-}t_1} \int_{t_1}^{t_2}  \tdis{J_\tau(t)} (u_\tau')  \dd \tau =
\frac12\Big(\tdis1(V_1) + \tdis2(V_2) \Big) 
= \frac12\Big(\frac{a_1b_1}{a_1{+}b_1} + \frac{a_2b_2}{a_2{+}b_2}  \Big)
   = \frac34 \,.
\]
However, the limit $u_{\lim}$ of $u_\tau$ satisfies $  u_{\lim}' =V_1=V_2=
\frac32\binom11$    and 
\[
\frac1{t_2{-}t_1}\int_{t_1}^{t_2}\calR_\eff(u_{\lim}'  (t))\dd t =
\calR_\eff\big( V_j\big) =\frac18 |V_j|^2 =  \frac{9}{16}\ 
\lneqq \ \frac34 \,. 
\]
Such a drop cannot be recovered if we treat 
the rate part of the dissipation and the slope part of the dissipation 
separately,  as is done in  \cite{Sandier-Serfaty}, see also the discussion
in \cite[Sec.\,5.4]{Miel23IAGS}. 
The approach of EDP convergence as studied in \cite{DoFrMi19GSWE, 
MiMoPe21EFED, MiPeSt21EDPC} may be capable to pass to the limit as well, but
this lies outside the range of this paper. 
\end{remark}

\subsection{A doubly nonlinear PDE example}
\label{ss:5.2}
In this section we consider a doubly nonlinear PDE of Allen-Cahn type where the
above split-step method applies.  Let $\Omega\subset\R^{d}$ be a bounded
Lipschitz domain. On the
 \begin{subequations}
 \label{setup-nonl-ex}
 \begin{align}
 \label{spaces-ex}
&  \textbf{Banach spaces} &&   \Spx{1}=\rmL^{p}(\Omega) \text{ with } p \in
(1,p_d],  
 \ \text{ and } \   \Spx{2}=\rmH_{0}^{1}(\Omega)
 \intertext{with $p_d= \tfrac{2d}{d-2}$ for $d\geq 3$, and $p_d $ \emph{arbitrary} in 
   $(1,\infty)$ for $d\in \{1,2\}$, so that $\Spx 2 \subset \Spx 1$ densely and
   continuously, we consider the} 
\label{potentials-ex}
& \textbf{dissipation potentials}  && 
\cR_{1}(v):=\frac{1}{p}\|v\|_{\rmL^{p}(\Omega)}^{p} \ \text{ and } \ 
   \cR_{2}(v):=\frac{1}{2}\|\nabla v\|_{\rmL^{2}(\Omega)}^{2} \,,
\intertext{and the}
\label{energy-ex}
& \textbf{energy functional } 
&&  \cE:[0,T]\times\rmL^{p}(\Omega)\to(-\infty,\infty] \ \text{ defined via }
 \end{align}
\[
\cE(t,u) : =\left\{
\begin{array}{cl}
\! \int_{\Omega}\frac{1}{2}|\nabla u|^{2} {+} W(u)\dd x-\langle\ell(t),u\rangle_{\rmH_0^1(\Omega)} & \text{for }u\in\rmH_{0}^{1}(\Omega)\text{ and }W(u)\in\rmL^{1}(\Omega)\,,
\\
\infty & \text{otherwise}\,.
\end{array}
\right.
\]
\end{subequations}
In what follows, we will suppose that 
\begin{subequations}
\label{ass-ex-E}
\begin{equation}
\label{ass-ell}
\ell\in\mathrm{C}^{1}([0,T];\rmH^{-1}(\Omega)),
\end{equation}
and, following \cite[Sec.\,7]{RMS08}, we will require that 
$W: \R \to \R $ satisfies $ W\in\mathrm{C}^{2}(\R) $ and 
\begin{equation}
\label{ass-W}
\begin{aligned}
\exists\, C_{W,1},C_{W,2},C_{W,3}>0  \ 
 \exists\, s_p\in(1,\tfrac{p_d}{p_*}) \ \forall \, r\in\R:\quad 
\left\{ 
\begin{array}{ll}
& \! \! \! \! \!
 W''(r)\geq-C_{W,1}\,,
 \\
 & \! \! \! \! \!
 W(r)\geq-C_{W,2}\,,
 \\
 &\! \! \! \! \!  |W'(r)|\leq C_{W,3} (1{+} |r|^{s_{p}})\,,
 \end{array}
 \right.
 \end{aligned}
\end{equation}
\end{subequations}
where with $p^*$ is the dual exponent to $p$.

Theorem \ref{th:ex-4} below addresses the validity of the assumptions for
Theorem\ \ref{th:conv-split-step} in the context of the two gradient systems
$(\Spx 1, \calE, \calR_1) $ and $(\Spx 2, \calE, \calR_2) $. Observe
that the corresponding evolutionary equations are
\[
\begin{aligned}
&
(\Spx 1, \calE, \calR_1)\,:   &&  \quad   |u'|^{p-2}u' - \Delta u + W'(u) && = &&  \ell && \quad  \text{in } (0,T)\ti\Omega,
\\
& (\Spx 2, \calE, \calR_2)\,:   &&\quad  -\Delta u' - \Delta u +W'(u)  && =  && \ell && \quad  \text{in } (0,T)\ti\Omega\,.
\end{aligned}
\]

\begin{theorem}
\label{th:ex-4}
Under conditions \eqref{ass-ex-E}, the gradient systems
$(\Spx 1, \calE, \calR_1) $ and $(\Spx 2, \calE, \calR_2) $ from
\eqref{setup-nonl-ex} comply with Hypotheses \ref{hyp:en-1}--\ref{hyp:dis} and
Hypothesis \ref{hyp:singleton}. The QYE from Hypothesis \ref{hyp:QYE} holds if
 and only if  $p\leq 2$. In particular, for $p\leq 2$ the solution of
the gradient system $(\Spx 1,\calE,\calR_\eff)$ can be constructed by the
time-splitting method.
\end{theorem}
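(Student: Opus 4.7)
The plan is to verify Hypotheses \ref{hyp:en-1}--\ref{h:chain-rule} and Hypothesis \ref{hyp:dis} via standard arguments for Allen-Cahn-type energies, observe that the singleton condition holds because $\calE$ has a single-valued functional derivative, and then devote most of the effort to the QYE, which is where the threshold $p\leq 2$ appears.

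\emph{Hypotheses \ref{hyp:en-1}--\ref{hyp:dis} and Hypothesis \ref{hyp:singleton}.} After subtracting a suitable constant and using Young's inequality to absorb $\langle\ell(t),u\rangle$ into $\tfrac14\|\nabla u\|_{\rmL^{2}}^{2}$, one obtains $\calE\geq\mathrm{const}>0$ together with the coercivity $\|\nabla u\|_{\rmL^{2}}^{2}\leq 4(\calE+\mathrm{const})$. Since $\ell\in\rmC^{1}([0,T];\rmH^{-1}(\Omega))$, the time derivative $\partial_t\calE(t,u)=-\langle\dot\ell(t),u\rangle$ is well defined and satisfies $|\partial_t\calE|\lesssim\|\nabla u\|_{\rmL^{2}}\lesssim\sqrt{\calE+1}$, which yields \eqref{gpower-control}. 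Weak lower semicontinuity along $\rmL^{p}$-sequences of bounded energy and continuity of $\partial_t\calE$ (Hypothesis \ref{h:2}) follow from the compact embedding $\rmH_{0}^{1}(\Omega)\hookrightarrow\rmL^{s_p+1}(\Omega)$ (available since $s_p<p_d/p_*$) and the polynomial growth of $W, W'$. The closedness of $\partial^{\Spx j}\calE$ (Hypothesis \ref{hyp:en-2}) holds because on energy-bounded sequences $-\Delta u_n\rightharpoonup-\Delta u$ and $W'(u_n)\to W'(u)$ pass to the limit in the appropriate dual topologies; in fact, a first-order Taylor expansion of $\calE(t,\cdot)$ at $u\in\rmH_{0}^{1}(\Omega)$ identifies the Fr\'echet subdifferential as the singleton $\{-\Delta u+W'(u)-\ell(t)\}$, which gives Hypothesis \ref{hyp:singleton} (and hence a strengthened form of Hypothesis \ref{hyp:en-2}). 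The chain rule (Hypothesis \ref{h:chain-rule}) is standard for this class of semi-convex energies (compare \cite[Sec.\,4]{RMS08}), and the dissipation potentials are powers of a norm, hence superlinear as required by Hypothesis \ref{hyp:dis}.

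\emph{QYE for $p\leq 2$.} For $\calR_{1}$ and $\calR_{2}$ individually the QYE is just Young's inequality $\calR_j(v)+\calR_j^*(\xi)\geq\|v\|_j\|\xi\|_{j,*}$ in the conjugate-exponent pairs $(p,p^*)$ and $(2,2)$. The crux is $\calR_\eff$. I intend to apply Lemma \ref{l:2.2} with $\psi(r)=r^p/p$: the upper bound $\calR_\eff(v)\leq\calR_{1}(v)=\psi(\|v\|_{\rmL^{p}})$ is immediate. For the matching lower bound, given any admissible decomposition $v=v_1+v_2$ with $v_2\in\rmH_{0}^{1}(\Omega)$, the Sobolev embedding $\rmH_{0}^{1}\hookrightarrow\rmL^{p}$ (available since $p\leq 2< p_d$) yields $\|v_2\|_{\rmL^{p}}\leq C_S\|\nabla v_2\|_{\rmL^{2}}$, whence
\[
  \calR_{1}(v_1)+\calR_{2}(v_2)\ \geq\ \tfrac{1}{p}\|v_1\|_{\rmL^{p}}^{p}+\tfrac{1}{2C_S^{2}}\|v_2\|_{\rmL^{p}}^{2}.
\]
A one-dimensional convex minimisation in $a=\|v_1\|_{\rmL^{p}}$ and $b=\|v_2\|_{\rmL^{p}}$ under the triangle constraint $a+b\geq\|v\|_{\rmL^{p}}=:s$ --- which can be carried out explicitly --- bounds this infimum below by $c_p s^p-C_p$ precisely when $p\leq 2$, since then the exponent $p$ of $a^p$ is no larger than the exponent $2$ of $b^2$ and so the $a^p$-term dominates along the optimal ray. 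Lemma \ref{l:2.2} then delivers the QYE for $\calR_\eff$, and Theorems \ref{exist:gflows} and \ref{th:conv-split-step} apply, giving the time-splitting convergence for $p\leq 2$.

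\emph{Sharpness for $p>2$.} The main obstacle is to show that the QYE genuinely fails when $p>2$. The heuristic is that the $\calR_{2}$-piece of the inf-convolution is then strictly cheaper than the $\calR_{1}$-piece along directions with large $\rmL^{p}$-mass, so that $\calR_\eff$ grows only quadratically rather than with power $p$. Concretely, I would fix $w_0\in\rmH_{0}^{1}\setminus\{0\}$ and set $v_n:=nw_0$, so that the trivial decomposition $(v_{n,1},v_{n,2})=(0,v_n)$ already gives $\calR_\eff(v_n)\leq Cn^{2}$. Then pick $a\in(1,p-1)$ --- a non-empty interval exactly when $p>2$ --- and build an oscillating test force $\xi_n(x):=n^{a}\cos(n^{a}\,\omega\cdot x)\,\psi(x)$, with $\psi\in\rmC_{c}^{\infty}(\Omega)$ and $\omega\in\mathbb{S}^{d-1}$ fixed. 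A Riemann-Lebesgue-type integration by parts yields $\|\xi_n\|_{\rmL^{p^*}}\sim n^{a}$ with $\|\xi_n\|_{\rmH^{-1}}$ uniformly bounded, whence $\calR_\eff^{*}(\xi_n)\leq Cn^{a p^*}+C'$. Since $\max(2,a p^*)<1+a$ on this range of $a$, we get
\[
  \frac{\calR_\eff(v_n)+\calR_\eff^{*}(\xi_n)}{\|v_n\|_{\rmL^{p}}\|\xi_n\|_{\rmL^{p^*}}}\ \lesssim\ \frac{n^{2}+n^{a p^*}}{n^{1+a}}\ \longrightarrow\ 0,
\]
which contradicts any estimate of the form $\calR_\eff(v)+\calR_\eff^{*}(\xi)\geq c\|v\|\|\xi\|-C$ and closes the dichotomy.
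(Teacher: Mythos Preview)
Your proposal is correct and follows the paper's overall structure, but two steps diverge in a way worth noting. For the QYE when $p\leq 2$, you treat both cases uniformly via Lemma~\ref{l:2.2} with $\psi(r)=r^p/p$, reducing the lower bound on $\calR_\eff$ to the scalar problem $\inf_{a+b\geq s}\big(\tfrac1p a^p+\tfrac{1}{2C_S^2}b^2\big)\geq c_p s^p-C_p$; the paper instead splits cases, invoking Lemma~\ref{l:QYE-more general} for $p<2$ (using $\calR_2(v)\geq c\|v\|_{\rmL^p}^2$ and the growth gap $q=p<2=p_{\text{lem}}$) and Lemma~\ref{l:2.2} only for $p=2$. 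Your unified route is slightly more self-contained. For $p>2$, both arguments exhibit a pair of sequences violating the QYE, but the scalings differ: the paper takes $\xi_n(x)=n\sin(n^{1-p^*/2}x_1)$, whose $\rmH^{-1}$-norm grows like $n^{p^*/2}$, and then tunes $\lambda_n=\|\xi_n\|_{\rmL^{p^*}}^{p^*/2}$; you instead match amplitude and frequency ($\xi_n=n^a\cos(n^a\omega\cdot x)\psi$), which keeps $\|\xi_n\|_{\rmH^{-1}}$ bounded and leaves the free exponent $a\in(1,p{-}1)$ to close the inequality $\max(2,ap^*)<1+a$. Both work; your construction makes the role of the threshold $p=2$ visible as the non-emptiness of the interval $(1,p{-}1)$, while the paper's makes it appear as $p^*<1+p^*/2$.
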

The proof will be carried out in the two following sections, starting with a
discussion of the properties of the energy functional $\calE$.

\subsubsection*{Properties of the energy}  
The lower bound on $W''$ required in \eqref{ass-W} will be used to derive
$\lambda$-convexity of $\calE$, the bound on $W$ shall provide the lower bound
on the energy, the third bound on $W$ will be exploited for proving the
closedness of the subdifferentials of $\calE$.

Indeed, it is immediate to check that $\calE$, whose proper domain
$\mathrm{dom}(\calE)$ is of the form $[0,T]\ti \domE_0$ with
$\domE_0 \subset \rmH_0^1(\Omega)$, is bounded from below and complies with the
power-control condition \eqref{gpower-control}. Since for every $E>0$ the
corresponding sublevel set $S_E$ (cf.\ \eqref{sublevel-sets}) is contained in a
bounded set in $\rmH_0^1(\Omega)$, we also immediately verify Hypothesis
\ref{h:2}.

The same arguments as in the proof of \cite[Lem.\,7.3]{RMS08} (cf.\ also
\cite[Rmk.\,7.4]{RMS08}) yield the following representations for the Fr\'echet
subdifferentials $\frnameopt j$ of $\calE(t,\cdot)$:
\[
\begin{aligned}
& \text{for } \Spx 1= \rmL^p(\Omega): && 
\frnameopt 1(t,u)=\begin{cases}\!
-\Delta u{+}W'(u){-}\ell(t) & \text{ if  } -\Delta u{+}W'(u){-}\ell(t) \in
\rmL^{p^*}(\Omega),\\ 
\emptyset & \text{ else};
\end{cases}
\\
& \text{for } \Spx 2= \rmH_0^1(\Omega): && 
\frnameopt 2(t,u)=\begin{cases}\!
-\Delta u{+}W'(u){-}\ell(t) & \text{ if  } -\Delta u{+}W'(u){-}\ell(t) \in
\rmH^{-1}(\Omega),\\ 
\emptyset & \text{ else}.
\end{cases}
\end{aligned}
\]
Hence, the \emph{singleton condition} from Hypothesis \ref{hyp:singleton} is
satisfied. As for the closedness requirement from Hypothesis \ref{hyp:en-2},
let $t_n \to t$ in $[0,T]$ and let us consider a sequence
$(u_{n})_{n} \subset S_E$ for some $E>0$, weakly converging to some $u$ in
$\rmL^p(\Omega)$.  Hence, $(u_n)_n$ is bounded in $\rmH_{0}^{1}(\Omega)$ and
thus $u_n \weakto u$ in $\rmH^1(\Omega)$, so that
$-\Delta u_{n}\rightharpoonup-\Delta u$ in $\rmH^{-1}(\Omega)$, and $u_n \to u$
strongly in $\rmL^{p_d-\eps}(\Omega)$ (as $p_d$ is the critical exponent from
the Rellich-Kondrachov theorem).  In particular, $u_{n}\to u$, and thus
$W'(u_{n})\to W'(u)$, a.e.\ in $\Omega$.  Furthermore,
$W'(u_{n})\leq C|u_{n}|^{s_{p}}$ a.e.\ in $\Omega$.  Since
$s_{p}<\frac{p_d}{p^*}$, by dominated convergence we conclude that
$W'(u_n) \to W'(u)$ in $\rmL^{p^*}(\Omega)$.  Also using that
$\ell(t_n) \to \ell(t) $ in $\rmH^{-1}(\Omega)$, we immediately conclude the
closedness of both subdifferentials $\frnameopt j$.

Finally, it was shown in \cite[Lem.\,7.3]{RMS08} that 
$\cE(t,\cdot)$ is $\lambda$-convex in $\rmL^1(\Omega)$, i.e.\
\begin{align}
\label{lambda-convexity}
&
\exists\,\lambda<0 \  \forall\, t \in [0,T] \ \forall\, u_0, u_1 \in \domE_0 
 \ \forall\, \theta \in [0,1]\, : 
\\
& \nonumber
\calE(t,u_\theta) \leq (1{-}\theta) \calE(t,u_0)+ \theta \calE(t,u_1) -
\frac\lambda 2 \theta(1{-}\theta) \|u_0{-}u_1\|_{\rmL^1(\Omega)}^2 \text{
  with } u_\theta= (1{-}\theta) u_0+\theta u_1
\end{align}
Then, $\cE(t,\cdot)$ are $\lambda$-uniformly convex in $ \Spx 1$ and in
$\Spx 2$ (namely, estimate \eqref{lambda-convexity} holds with
$\| \cdot\|_{\rmL^1(\Omega)}$ replaced by $\| \cdot\|_{\rmL^p(\Omega)}$ and
$\| \cdot\|_{\rmH_0^1(\Omega)}$, respectively,  and $\lambda$  suitably
adjusted).  Then, we are in a position to apply \cite[Prop.\,A.1]{MR21} and
conclude the validity of the chain rule property $\mathbf{<CR>}$ for
$(\calE, \frnameopt j)$.

\subsubsection*{Properties of the dissipation potentials $\calR_1$, $\calR_2$ 
and $\calR_\eff$}
Finally, we discuss the validity of Hypotheses \ref{hyp:dis} and \ref{hyp:QYE}
for $\calR_1$ and $\calR_2$.  Obviously, the dissipation potential $\calR_1$
and its conjugate $\calR_1^*$ comply with condition
\eqref{gsuperlinear-growth}; also $\calR_2$ has superlinear growth, since on
$\Spx 2= \rmH^1_0(\Omega)$ the function $\|\nabla \cdot\|_{\rmL^{2}(\Omega)}$
provides a norm equivalent to the $\rmH^1(\Omega)$-norm. Finally, we observe
that
 \[
   \cR_{2}^{*}(\xi)=\sup_{v\in\rmH_{0}^{1}(\Omega)}\left\{ \langle
     v,\xi\rangle-\frac{1}{2}\|\nabla v\|_{\rmL^{2}(\Omega)}^{2}\right\}
   =\frac{1}{2}\|\nabla v_{*}\|_{\rmL^{2}(\Omega)}^{2},
\]
where $v_{*}\in\rmH_{0}^{1}(\Omega)$ is the unique solution of
$\xi=-\Delta v_{*}$ in the $\rmH_{0}^{1} \ti \rmH^{-1}$ duality.  Hence,
\begin{equation}
\label{R2-star-example}
\exists\, c, C>0 \ \forall\, \xi \in \rmH^{-1}(\Omega){:} \ \ 
c \|\xi\|_{\rmH^{-1}(\Omega)}^2  \leq   \cR_{2}^{*}(\xi)=\frac{1}{2}\|\nabla(-\Delta)^{-1}\xi\|_{\rmL^{2}(\Omega)}^{2}  \leq C \|\xi\|_{\rmH^{-1}(\Omega)}^2.
\end{equation}
We now discuss the validity of the Quantitative Young Estimate
\eqref{eq:QuYouEst} for $\calR_\eff$,
\[
  \cR_{\eff}(u)=\min_{v\in\rmL^{p}(\Omega)} \left(
    \frac{1}{p}\|v{-}u\|_{\rmL^{p}(\Omega)}^{p} {+} \frac{1}{2}\|\nabla
    v\|_{\rmL^{2}(\Omega)}^{2} \right)
\]
(where we omit to detail that the term $\|\nabla v\|_{\rmL^{2}(\Omega)}^{2}$ is
replaced by $\infty$ if $v \in L^{p}(\Omega){\setminus} \rmH_0^1(\Omega)$, cf.\
\eqref{largest}). We will distinguish the cases $p\in (1,2)$, $p=2$ and $p>2$,
and show that the QYE holds if and only if $p\leq 2$.   For this, recall
$\Spx\eff= \Spx1=\rmL^p(\Omega)$. 

\paragraph{QYE for  $p \in (1,2)$.}
Using $\|\nabla u\|_{\rmL^{2}(\Omega)}\geq C \|u\|_{\rmL^{p_d}(\Omega)}$ 
on $\Spx2=\rmH^1_0(\Omega)$,  we have
\[
\cR_{2}(v)\geq\frac{1}{2}\|\nabla v\|_{\rmL^{2}(\Omega)}^{2}\geq \frac C2
\|v\|_{\rmL^{p_d}(\Omega)}^{2}. 
\]
Since $p_d>2>p$, we obtain 
\[
\exists\, \overline{c}>0  \ \forall\, v \in \rmH_0^1(\Omega): \quad
\cR_{2}(v)\geq \overline{C}\|v\|_{\rmL^{p_d}(\Omega)}^{2} \geq
\overline{c}\|v\|_{\rmL^{p}(\Omega)}^{2}  \,. 
\]
Therefore, $\calR_1$ and $\calR_2$ comply with condition
\eqref{overline-growth} of Lemma \ref{l:QYE-more general}, which guarantees the
validity of the QYE for $\calR_\eff$.

\paragraph{QYE for $p=2$.} In this case, 
\[
\cR_{\eff}^{*}(\xi) = \cR_{1}^{*}(\xi) + \cR_2^*(\xi) = \frac12 \|\xi\|^2_{\rmL^2(\Omega)} + \frac{1}{2}\|\nabla(-\Delta)^{-1}\xi\|_{\rmL^{2}(\Omega)}^{2} \quad \text{for all } \xi \in \Spx 1^* = \rmL^2(\Omega)\,.
\]
Hence, also for $\calR_\eff$ we have both a quadratic upper bound and a
quadratic lower bound. Therefore, we may apply Lemma \ref{l:2.2} and conclude
that $\calR_\eff$ complies with the QYE.

\paragraph{Failure of QYE for  $p>2$.} Below we will establish the following
two statements:
\begin{align*}
\text{(A) } & \exists\, v\neq 0\ \exists\, C_\rmA>0\ \forall\, \lambda >0: \quad \cR_\eff(\lambda
v) \leq C_\rmA \lambda^2 \,,
\\
\text{(B) } & \exists\, \big( \xi_n\big)_{n\in \N} \text{ in } \Spx1^*\ 
 \exists\, C_\rmB>0: \quad
\|\xi_n\|_{\rmL^{p^*}(\Omega)} \to \infty \ \text{ and } \ 
\cR_\eff^*(\xi_n) \leq C_\rmB\| \xi_n\|_{\rmL^{p^*}(\Omega)}^{p^*} \,.
\end{align*}

\noindent\underline{\emph{Step 1:} (A) and (B) imply that QYE does not hold.}
In (A) we can choose $\lambda=\lambda_n=\|\xi_n\|_{\rmL^{p^*}}^{p^*/2}$. Then,
(A) and (B) imply the upper bound
\begin{equation}
  \label{eq:Reff.bound}
  \cR_\eff(\lambda_n v) + \cR_\eff^*(\xi_n) \leq \big( C_\rmA + C_\rmB\big)
  \|\xi_n\|_{\rmL^{p^*}(\Omega)}^{p^*} \quad \text{for all } n\in \N. 
\end{equation}
However, QYE would imply the lower bound 
\[
  \cR_\eff(\lambda_n v) + \cR_\eff^*(\xi_n) \geq c\| \lambda_n v\|_{\rmL^p(\Omega)}
  \|\xi_n\|_{\rmL^{p^*}(\Omega)} -C = c\|v\|_{\rmL^p(\Omega)}\,  \|\xi_n\|_{\rmL^{p^*}(\Omega)}^{1+p^*/2}-C. 
\]
Because of $p>2$ we have $p^*=p/(p-1) \in (1,2)$ and hence $p^* \lneqq
1+p^*/2$. Since, by (B) we can take $\|\xi_n\|_{\rmL^{p^*}}$ arbitrarily large,
we see that the lower bound derived from QYE contradicts the upper bound
\eqref{eq:Reff.bound}. Hence, QYE is false. \smallskip

\noindent\underline{\emph{Step 2:} (A) holds.} We choose any $v \in
\rmL^p(\Omega) \cap \rmH^1_0(\Omega)$ with $v\neq 0$. Then, $\cR_\eff(\lambda
v) \leq \cR_2(\lambda v) = C_\rmA\lambda^2$ with $ C_\rmA =  \frac12\|\nabla
v\|^2_{\rmL^2(\Omega)}$.\smallskip 

\noindent\underline{\emph{Step 3:} (B)  holds.} We set $\xi_n(x) = n
\sin\big( n^{1-p^*/2} x_1\big)$ and observe
\[
\| \xi_n\|_{\rmL^{p^*}}^{p^*} = \int_\Omega n^{p^*} \big| \sin( n^{1-p^*/2}
x_1)\big|^{p^*} \dd x \geq n^{p^*} \int_\Omega  \big| \sin( n^{1-p^*/2}
x_1)\big|^{2} \dd x \geq \frac{n^{p^*}}2 \,\sqrt{|\Omega|}  \,,
\]
where we used $|\sin \alpha|^{p^*} \geq |\sin \alpha |^2$ because of $p^*\in
(1,2)$. 
Moreover, using $\xi_n = - \pl_{x_1} \Xi_n$ for $\Xi_n(x) = n^{p^*/2} \cos(
n^{1-p^*/2} x_1) $ we find 
\begin{align*}
\|\xi_n\|_{\rmH^1_0(\Omega)^*}&
 = \sup_{\|v\|_{\rmH^1_0}\leq 1} \int_\Omega \xi_n  \,v \dd x  
 = \sup_{\| v\|_{\rmH^1_0}\leq 1} \int_\Omega -\partial_{x_1}\Xi_n  \,v \dd x  
\\
&=  \sup_{\| v\|_{\rmH^1_0}\leq 1} \int_\Omega \Xi_n  \,\pl_{x_1} v \dd x  
\leq \| \Xi_n\|_{\rmL^2(\Omega)} \leq  n^{p^*/2}  \sqrt{|\Omega|} \,.
\end{align*}
With the above estimates, we arrive at
$ \cR_\eff^*(\xi_n)=\cR_1^*(\xi_n)+\cR_2^*(\xi_n) = \frac1{p^*} \|
\xi_n\|_{\rmL^{p^*}(\Omega)}^{p^*} + \frac12\|\xi_n\|_{\rmH^1_0(\Omega)^*}^2 \leq \wt C
\,n^{p^*}$, and (B) follows with $C_\rmB= 2\wt C/\sqrt{|\Omega|}$.

\section{Convergence proof of the time-splitting method}
\label{s:4}
Our argument for the proof of Theorem \ref{th:conv-split-step} is carried out
in the following steps, tackled in the upcoming Sections
\ref{ss:4.1}--\ref{ss:4.4}.  It follows the classical existence theory for
solutions to gradient flow equations, see the survey \cite{Miel23IAGS}. 
\begin{description}
\item[(1) A priori estimates and compactness:] From the
  energy-dissipation balances for the Cauchy problem \eqref{Cauchy-pb-1} on the
   semi-intervals  $(\lint{k}{\bftau})_{k=1}^{N_\bftau} $, and for
  \eqref{Cauchy-pb-2} on the  semi-intervals 
  $(\rint{k}{\bftau})_{k=1}^{N_\bftau} $, we will deduce an overall
  energy-dissipation balance satisfied on the interval $[0,T]$ by the curves
  $\pws U\bftau$ from \eqref{def-pws}.  Therefrom we will derive all the a
  priori estimates on the family $(\pws U\bftau)_\bftau$, cf.\ Proposition
  \ref{prop:aprio} ahead. Consequently, we will deduce suitable compactness
  properties for a sequence $(\pws U{\bftau_n})_n$.
\end{description}
We then pass to the limit in the energy-dissipation balance, by
separately addressing 
\begin{description}
\item[(2) the limit passage in the rate term]
  $\calD^\text{rate}_\bftau([0,T])$, where we use
  $\cR_\eff = \cR_1 \INFCONV \cR_2$, and
\item[(3) the limit passage in the slope term]
  $\calD^\text{slope}_\bftau([0,T])$, where we exploit the singleton condition \eqref{singleton}. 
\end{description}
With Steps (1)--(3) we will thus show that (along a subsequence) the curves
$(\pws U{\bftau_n})_n$ converge to a curve $U \in \AC([0,T];\Spx 1)$ for which
there exists $\xi \in \rmL^1([0,T];\Spx 1^*)$ such that the pair $(U,\xi)$
complies with the upper energy-dissipation estimate
\begin{equation}
\label{EDB-0T-lim}
\en T{U(T)} + \int_0^T \!\! \big(  \calR_\eff (U'(r)){+}
\calR_\eff^*({-}\xi(r)) \big) \dd r  \leq \en 0{U(0)} +\int_0^T \!\! \pet
r{U(r)} \dd r \,. 
\end{equation}
\begin{description}
\item[(4) Conclusion of the proof:] We will apply  energy-dissipation
  principle from  Proposition \ref{p:EDP} to conclude that 
  $(U,\xi)$ fulfills the subdifferential inclusion \eqref{solving-the-equn} and
  the energy-dissipation balance \eqref{EDB-effect}.  With a careful
  argument based on the limit passage from the approximate energy-dissipation
  balance to \eqref{EDB-effect}, we  then derive the optimal
  decomposition \eqref{optimal-decomposition} and the enhanced
  convergences \eqref{enh-cvg} and \eqref{optimal-rates}.
\end{description}

\subsection{Approximated energy-dissipation balance and a priori estimates}
\label{ss:4.1}

To state the approximate energy-dissipation balance we introduce a curve
$\xi_\bftau:[0,T]\to \Spx 2^*$ encompassing the force terms that appear in the
subdifferential inclusions \eqref{Cauchy-pb-1} \& \eqref{Cauchy-pb-2}. We will
separately define $\xi_\bftau$ on the sets
$\bigcup_{k=1}^{N_\bftau} \lint{k}{\bftau}$ and
$\bigcup_{k=1}^{N_\bftau} \rint{k}{\bftau}$ whose union gives $[0,T]$.  Recall
that, for every $k \in \{1,\ldots, N_\bftau\}$, on the  semi-interval 
$\lint{k}{\bftau}$ the curve $\pws U\bftau$ fulfills the energy dissipation
balance
\begin{subequations}
  \begin{align}
    \nonumber
    \en t{\pws U\bftau(t)} + \int_{\dis t \bftau {k-1}}^t \!\! \left( \tdis 1
      (\pws U{\bftau}'(r)){+} \tdis{1}^*({-} \xi_\bftau(r)) \right) \dd r & =
    \en{\dis t \bftau{k-1}}{\pws U{\bftau}(\dis t{\bftau}{k-1})} + \int_{\dis
      t\bftau{k-1}}^t \!\! \partial_t \en r{\pws U{\bftau}(r)} \dd r
    \\
    \label{en-diss-discr-1}
    & \quad \text{for } \dis t {\bftau}{k-1} \leq t \leq \thalf 
  \end{align}
(recall $\thalf = \dis t {\bftau}{k-1} + \frac{\tau_k}{2}$),  where
  $\xi_\bftau: \bigcup_{k=1}^{N_\bftau} \lint{k}{\bftau} \to \Spx 1^*$
  satisfies
 \begin{equation}
 \label{upwc-xi}
 \xi_{\bftau}(r) \in \frsubopt 1 {r}{\pws U{\bftau}(r)} \cap ({-}\partial \tdis
 1 (\pws U{\bftau}'(r))) \quad \foraa\ r \in    \bigcup_{k=1}^{N_\bftau}
 \lint{k}{\bftau}\,. 
  \end{equation}
  \end{subequations}
Likewise, 
on each  interval $\rint{k}{\bftau}$ an energy-dissipation balance involving the dissipation potential $\tdis 2$ holds, namely
 \begin{subequations}
 \begin{equation}
  \label{en-diss-discr-2}
 \begin{aligned}
 &
 \en t{\pws U\bftau(t)} + \int_{\thalf}^t 
 \left( \tdis 2 (\pws U{\bftau}'(r)){+} \tdis{2}^*({-} \xi_\bftau(r)) \right) \dd r 
 \\
 &  = \en{\thalf}{\pws U{\bftau}(\thalf)} + \int_{\thalf}^t \partial_t \en r{\pws U{\bftau}(r)} \dd r 
\quad \text{ for } \thalf \leq t \leq\dis t {\bftau}{k} \,,
 \end{aligned}
\end{equation}
where $\xi_{\bftau} : \bigcup_{k=1}^{N_\bftau} \rint{k}{\bftau} \to \Spx 2^*$
satisfies
\begin{equation}
  \label{pwc-xi}
  \xi_{\bftau}(r) \in \frsubopt 2 {r}{\pws U{\bftau}(r)} \cap ({-}\partial
  \tdis 2 (\pws U{\bftau}'(r))) \qquad \foraa\  r  \in \bigcup_{k=1}^{N_\bftau}
  \rint{k}{\bftau}\,. 
\end{equation}
\end{subequations}

Combining \eqref{en-diss-discr-1} and \eqref{en-diss-discr-2} we deduce the
overall energy-dissipation balance  satisfied by
the curves $\pwc U{\bftau}$, featuring the \emph{rate} and \emph{slope} terms
$\calD_{\bftau}^{\mathrm{rate}} $ and $ \calD_{\bftau}^{\mathrm{slope}} $ from
\eqref{rate+slope-terms}, which are defined by alternating between $\tdis1$ and
$\tdis 2$. 
This energy balance is the starting point for the derivation of the first set
of a priori estimates on the curves $(\pws U\bftau)_{\bftau \in \Lambda}$. 

\begin{proposition}
\label{prop:aprio}
The functions 
$(\pws U\bftau)_{\bftau \in \Lambda}$ and  $(\xi_\bftau )_{\bftau \in \Lambda}$ satisfy the energy-dissipation balance
\begin{equation}
\label{EDB-approx}
\begin{aligned}
& 
\en t{\pws U\bftau(t)}   + \mathcal{D}_{\bftau}^{\mathrm{rate}} ([s,t])  + \mathcal{D}_{\bftau}^{\mathrm{slope}} ([s,t])   
 = 
\en s{\pws U\bftau(s)} +\int_s^t \partial_t 
\en r{\pws U\bftau(r)} \dd r  
\end{aligned}
\end{equation}
 for every $[s,t]\subset [0,T]$. 
Furthermore, there exists a positive constant $\overline{C}>0$ such that the following estimates are valid for all $\bftau \in \Lambda$:
\begin{subequations}
\label{aprio-est}
\begin{align}
\label{aprio-est-en+pow}
&
\sup_{t\in [0,T]} \mfE(\pws U\bftau(t))  \leq \overline{C},  \qquad \qquad  \sup_{t\in [0,T]}| \pet t{\pws U\bftau(t)} | \leq \overline{C}, 
\\
\label{aprio-diss}
& 
 \mathcal{D}_{\bftau}^{\mathrm{rate}} ([0,T])   \leq \overline{C}, 
\\
&
\label{aprio-conj}
 \mathcal{D}_{\bftau}^{\mathrm{slope}} ([0,T]) \leq \overline{C}, 
\end{align}
\end{subequations}
and the families
$\big(\chi_\bftau \pws U{\bftau}' \big)_{\bftau \in \Lambda}\subset
\rmL^1([0,T]; \Spx 1)$ and
$\big( (1{-}\chi_\bftau) \pws U{\bftau}' \big)_{\bftau \in \Lambda}\subset
\rmL^1([0,T]; \Spx 2)$ are uniformly integrable; in particular,
$(\pws U{\bftau}')_{\bftau \in \Lambda}\subset \rmL^1([0,T]; \Spx 1)$ is
uniformly integrable.
\end{proposition}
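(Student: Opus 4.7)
The strategy is to first derive the global energy-dissipation balance \eqref{EDB-approx} by telescoping the single semi-interval balances \eqref{en-diss-discr-1} and \eqref{en-diss-discr-2}, and then extract the a priori bounds from it using Grönwall's inequality (for energy and power) together with the superlinearity of $\unipsi$ from Lemma \ref{l:unique-psi} (for uniform integrability of the rates).

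\textbf{Telescoping to obtain \eqref{EDB-approx}.} By construction \eqref{def-pws}, the curve $\pws U\bftau$ is continuous at each node $\dis t\bftau k$ and each midpoint $\thalf$. Summing \eqref{en-diss-discr-1} over all left semi-intervals $\lint{k}{\bftau}$ and \eqref{en-diss-discr-2} over all right semi-intervals $\rint{k}{\bftau}$ contained in $[s,t]$, and adding the partial contributions on the (at most two) truncated semi-intervals containing $s$ and $t$, yields \eqref{EDB-approx}: the energy values at consecutive endpoints telescope, while the dissipation integrands, alternating between $\tdis 1, \tdis 1^*$ and $\tdis 2, \tdis 2^*$, combine exactly into the $\chi_\bftau$-weighted expressions \eqref{rate-term} and \eqref{slope-term}.

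\textbf{Energy, power and dissipation bounds.} Dropping the nonnegative rate and slope contributions in \eqref{EDB-approx} on $[0,t]$ and applying the power control \eqref{gpower-control}, we get
\[
\en t{\pws U\bftau(t)} \leq \en 0{u_0} + C_\# \int_0^t \en r{\pws U\bftau(r)} \dd r.
\]
Grönwall's lemma thus gives $\en t{\pws U\bftau(t)} \leq \en 0{u_0}\, \rme^{C_\# T}$ uniformly in $\bftau$ and $t$, from which \eqref{aprio-est-en+pow} follows via \eqref{Gronwall-est} and \eqref{gpower-control}. Plugging these bounds and the lower bound $\en T{\pws U\bftau(T)}\geq C_0$ back into \eqref{EDB-approx} on $[0,T]$ yields
\[
\mathcal{D}^{\mathrm{rate}}_\bftau([0,T]) + \mathcal{D}^{\mathrm{slope}}_\bftau([0,T]) \leq \en 0{u_0} - C_0 + T\, C_\#\, \rme^{C_\# T}\, \en 0{u_0},
\]
establishing \eqref{aprio-diss} and \eqref{aprio-conj}.

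\textbf{Uniform integrability.} By Lemma \ref{l:unique-psi} there exists a convex, increasing, superlinear $\unipsi$ with $\calR_j(v)\geq \unipsi(\|v\|_j)$ for $j=1,2$; hence $\tdis j(v)=2\calR_j(\tfrac12 v)\geq 2\unipsi(\tfrac12\|v\|_j)$. The bound \eqref{aprio-diss} then yields
\[
\int_0^T \Big\{\chi_\bftau(r)\,\unipsi\big(\tfrac12\|\pws U\bftau'(r)\|_1\big) + (1{-}\chi_\bftau(r))\,\unipsi\big(\tfrac12\|\pws U\bftau'(r)\|_2\big)\Big\}\dd r \leq \tfrac12\,\overline C.
\]
The de la Vall\'ee Poussin criterion then gives uniform integrability of $(\chi_\bftau \pws U\bftau')_\bftau$ in $\rmL^1([0,T];\Spx 1)$ and of $((1{-}\chi_\bftau)\pws U\bftau')_\bftau$ in $\rmL^1([0,T];\Spx 2)$. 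Since the continuous embedding $\Spx 2\hookrightarrow \Spx 1$ preserves uniform integrability, the second family is uniformly integrable also in $\rmL^1([0,T];\Spx 1)$, and the decomposition $\pws U\bftau' = \chi_\bftau \pws U\bftau' + (1{-}\chi_\bftau)\pws U\bftau'$ delivers uniform integrability of $(\pws U\bftau')_\bftau$ in $\rmL^1([0,T];\Spx 1)$.

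\textbf{Main obstacle.} The genuinely delicate point is the bookkeeping in the telescoping step, since \eqref{en-diss-discr-1}--\eqref{en-diss-discr-2} are formulated only between consecutive semi-interval endpoints, whereas arbitrary $[s,t]\subset [0,T]$ may cut into the middle of a semi-interval; one must therefore invoke the balance separately on the truncated pieces containing $s$ and $t$. Once \eqref{EDB-approx} is in hand, the remaining estimates reduce to a standard Grönwall and de la Vall\'ee Poussin argument.
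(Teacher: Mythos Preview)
Your proposal is correct and follows essentially the same approach as the paper: telescoping the semi-interval balances \eqref{en-diss-discr-1}--\eqref{en-diss-discr-2}, then Gr\"onwall for the energy and power bounds, and finally superlinearity of the potentials for uniform integrability. The only organizational difference is that the paper handles general $[s,t]$ by first establishing \eqref{EDB-approx} on $[0,t]$ and then subtracting the identity on $[0,s]$, rather than directly treating the truncated end semi-intervals as you do; both arguments exploit that the semi-interval balances are equalities.
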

\begin{proof} The energy-dissipation balance \eqref{EDB-approx} follows on
$[0,t]$ simply adding \eqref{en-diss-discr-1} and \eqref{en-diss-discr-2}
over all relevant intervals.  By subtracting the result for $[0,s]$ from that
for $[0,t]$ the desired result for $[s,t]$ follows.

Estimates \eqref{aprio-est} follow from standard arguments (cf., e.g.,
\cite[Prop.\,6.3]{MRS2013}), which rely on the power-control estimate
\eqref{gpower-control} giving
$\int_s^t \pl_t \en r{\pws U\bftau(r)} \dd r \leq C_\# \int_s^t \en r{\pws
  U\bftau(r)} \dd r$. Hence, via Gr\"onwall's lemma, from \eqref{EDB-approx} we
derive the energy estimate in \eqref{aprio-est-en+pow}; the power estimate
immediately follows via \eqref{gpower-control}.  From \eqref{EDB-approx} we
then immediately conclude \eqref{aprio-diss} and \eqref{aprio-conj}.

  From \eqref{aprio-diss}, taking into account that the terms
  $\calR_j \left(\tfrac12 \pws U{\bftau}'\right) $ contribute to
  $\mathcal{D}_{\bftau}^{\mathrm{rate}}$ and that each $\calR_j$ are
  superlinear growth, we deduce that the families
  $ (\chi_\bftau\pws U\bftau')_{\bftau \in \Lambda}$ and
  $ ((1{-}\chi_\bftau)\pws U\bftau')_{\bftau \in \Lambda}$ are uniformly
  integrable in $\rmL^1([0,T];\Spx 1)$ and $\rmL^1([0,T];\Spx 2)$,
  respectively.
\end{proof}

It is now convenient to rewrite the `rate' and `slope' terms featuring in
\eqref{EDB-approx} in terms of the repetition operators $ \TT^{(j)}_{\bftau}$
from \eqref{repetition-operators}.  Their role, in the context of the present
time-splitting scheme, is now clear: $\TT^{(1)}_\bftau$ repeats ``$1$-steps''
and omits ``$2$-steps'', while $\TT^{(2)}_\bftau$ does the converse. Trivial
calculations based on the definition of the repetition operators identify the
contributions to $ \mathcal{D}_{\bftau}^{\mathrm{rate}} $ and
$\mathcal{D}_{\bftau}^{\mathrm{slope}} $ with quantities involving the
`repeated rates' and the `repeated forces', namely
 \begin{equation}
 \label{useful-identities}
\begin{aligned}
   &  \int_0^{t^k_\bftau} 
\chi_{\bftau}^{(j)} (r) \:  \tdis j  \left(V(r)\right)  \dd r = 
\int_0^{t^k_\bftau}  \calR_j(\tfrac12  \TT^{(j)}_\bftau V(r)) \dd r
&& \text{for } V \in \rmL^1([0,T]; \Spx j)\,, 
\\
   &  \int_0^{t^k_\bftau}
 \chi_{\bftau}^{(j)} (r) \:  \tdis j^*  (\Xi (r)) \dd r = 
 \int_0^{t^k_\bftau}  \calR_j^*(\TT^{(j)}_\bftau 
 \Xi(r)) \dd r  && \text{for } \Xi \in \rmL^1([0,T]; \Spx j^*)\,,
 \end{aligned}
 \end{equation}
where we have used the place-holder
 \[
 \chi_{\bftau}^{(1)} := \chi_{\bftau}  \quad \text{and} \quad 
   \chi_{\bftau}^{(2)} :=  1{-}\chi_{\bftau} \,. 
 \]
Therefore, the rate and slope parts of the dissipation take the form 
\begin{subequations}
\label{rephrasing-via-repet}
\begin{align}
&
\label{rephrasing-via-repet-a}
 \mathcal{D}_{\bftau}^{\mathrm{rate}} \big( [0,t^k_\bftau]  \big)=
  \int_0^{t^k_\bftau} \left\{  \calR_1(\tfrac12
   \TT^{(1)}_{\bftau} \pws U{\bftau}'(r)) {+} 
   \calR_2(\tfrac12  \TT^{(2)}_{\bftau}\pws U{\bftau}'(r)) \right\} \dd r\,, 
 \\
 & 
 \label{rephrasing-via-repet-b}
 \mathcal{D}_{\bftau}^{\mathrm{slope}} \big( [0,t^k_\bftau]  \big) =
  \int_0^{t^k_\bftau}  \left\{  \calR_1^*({-} \TT^{(1)}_{\bftau} \pws \xi{\bftau}(r)) {+}
     \calR_2^*({-}\TT^{(2)}_{\bftau} \pws \xi{\bftau} (r))  
 \right\} \dd r\,. 
\end{align}
\end{subequations} 
 We stress that, in \eqref{rephrasing-via-repet-a} the terms $\TT^{(j)}_{\bftau} \pws U{\bftau}'$ are the `repeated rates' $\TT^{(j)}_{\bftau} (\pws U{\bftau}')$, not to be confused with the rates of the repeated curves $\TT^{(j)}_{\bftau}\pws U{\bftau}$. 
As  a straightforward consequence of estimates \eqref{aprio-diss} and \eqref{aprio-conj}, combined with \eqref{rephrasing-via-repet} and the superlinear growth of $\calR_j$ and
$\calR_j^*$,  we have the following
\begin{corollary}
\label{cor:new-aprio}
For $j \in \{1,2\}$ the families
$( \TT^{(j)}_{\bftau} \pws U{\bftau}')_{\bftau \in \Lambda}\subset
\rmL^1([0,T]; \Spx j)$ and
$( \TT^{(j)}_{\bftau} \pws \xi{\bftau})_{\bftau \in \Lambda}\subset
\rmL^1([0,T]; \Spx j^*)$ are uniformly integrable.
\end{corollary}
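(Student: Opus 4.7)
The plan is to directly combine the integral identities in \eqref{rephrasing-via-repet} with the a~priori bounds \eqref{aprio-diss} and \eqref{aprio-conj}, and then invoke the de la Vallée Poussin criterion via the superlinear minorants for $\calR_j$ and $\calR_j^*$ provided by Lemma \ref{l:unique-psi}.

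More precisely, by \eqref{rephrasing-via-repet-a} (taking $t_\bftau^k = T$) and \eqref{aprio-diss}, for each $j \in \{1,2\}$ I would first observe that
\[
\int_0^T \calR_j\bigl(\tfrac12 \TT^{(j)}_{\bftau} \pws U{\bftau}'(r)\bigr) \dd r
\;\leq\; \mathcal{D}_{\bftau}^{\mathrm{rate}}([0,T]) \;\leq\; \overline{C}
\qquad \text{for all } \bftau \in \Lambda.
\]
Analogously, from \eqref{rephrasing-via-repet-b} and \eqref{aprio-conj} I obtain the uniform bound
\[
\int_0^T \calR_j^*\bigl({-}\TT^{(j)}_{\bftau} \pws \xi{\bftau}(r)\bigr) \dd r \;\leq\; \overline{C}.
\]
These are the only a~priori inputs needed; the rest is a standard application of superlinear control.

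For the second step, I would apply Lemma \ref{l:unique-psi} to get a convex, increasing, superlinear function $\unipsi:[0,\infty)\to[0,\infty)$ with $\calR_j(v) \geq \unipsi(\norm{v}j)$ and $\calR_j^*(\xi) \geq \unipsi(\norms{\xi}j)$. Substituting into the above bounds, since $\unipsi$ is convex and monotone increasing with $\unipsi(0)=0$, we have $\unipsi(r/2) \geq \tfrac12 \unipsi(r) - \tfrac12 \unipsi(0) = \tfrac12 \unipsi(r)$ (or more simply, one can just redefine $\wt\unipsi(r):= \unipsi(r/2)$, which is still convex, monotone and superlinear), and therefore
\[
\int_0^T \unipsi\bigl(\tfrac12\|\TT^{(j)}_{\bftau}\pws U{\bftau}'(r)\|_j\bigr) \dd r \;\leq\; \overline{C},
\qquad
\int_0^T \unipsi\bigl(\|\TT^{(j)}_{\bftau}\pws \xi{\bftau}(r)\|_{j,*}\bigr) \dd r \;\leq\; \overline{C}.
\]

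The third and final step is the de la Vallée Poussin criterion: a family $(f_\bftau)_\bftau \subset \rmL^1([0,T];Y)$ of Bochner-integrable functions with values in a Banach space $Y$ is uniformly integrable as soon as $\sup_\bftau \int_0^T \Phi(\|f_\bftau(r)\|_Y) \dd r < \infty$ for some superlinear convex $\Phi:[0,\infty)\to[0,\infty)$. Applied with $\Phi = \unipsi$, this yields uniform integrability of $\bigl(\TT^{(j)}_{\bftau} \pws U{\bftau}'\bigr)_\bftau$ in $\rmL^1([0,T];\Spx j)$ and of $\bigl(\TT^{(j)}_{\bftau} \pws \xi{\bftau}\bigr)_\bftau$ in $\rmL^1([0,T];\Spx j^*)$, as claimed. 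There is no real obstacle here: the whole content of the corollary is the bookkeeping identity \eqref{rephrasing-via-repet}, which converts integrals of $\chi_\bftau^{(j)} \tdis j$ and $\chi_\bftau^{(j)} \tdis j^*$ against $\pws U{\bftau}'$ and $\pws \xi\bftau$ into integrals of $\calR_j$ and $\calR_j^*$ against the \emph{repeated} rates and forces, at which point the already-established energy-dissipation bound for the whole curve on $[0,T]$ controls each repeated object separately.
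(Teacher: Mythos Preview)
Your proof is correct and matches the paper's approach exactly: the paper simply states that the corollary is ``a straightforward consequence of estimates \eqref{aprio-diss} and \eqref{aprio-conj}, combined with \eqref{rephrasing-via-repet} and the superlinear growth of $\calR_j$ and $\calR_j^*$,'' and you have supplied precisely those details via the de~la~Vall\'ee~Poussin criterion. One small slip: the inequality $\unipsi(r/2) \geq \tfrac12\unipsi(r)$ goes the wrong way for a convex function with $\unipsi(0)=0$ (convexity gives $\leq$), but your parenthetical fix---working directly with $\wt\unipsi(r):=\unipsi(r/2)$, which remains convex, increasing and superlinear---is exactly right and is all that is needed.
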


Relying on Proposition \ref{prop:aprio} and Corollary \ref{cor:new-aprio} we
obtain the following compactness result. In \eqref{cvg-1} below we refer to the
convergence in the space $ \rmC^0([0,T];\Spxw 1)$, whose meaning has been
specified prior to the statement of Lemma \ref{lem:repetition-operator}.

\begin{corollary}
\label{cor:compactness}
Let $(\bftau_n)_{n} \subset \Lambda$ fulfill
$\lim_{n\to\infty} |\bftau_n | =0$. Then, there exist a (non-relabeled)
subsequence and a limit curve $U \in \AC ([0,T];\Spx 1)$  such that the
following convergences hold as $n\to\infty$:
\begin{subequations}
\label{convergences}
\begin{align}
& 
\label{cvg-1}
\pws U{\bftau_n} \to U   && \text{in }  \rmC^0([0,T];\Spxw 1), 
\\
& 
\label{cvg-2}
\pws U{\bftau_n}' \weakto U' && \text{in }  \rmL^1 ([0,T];\Spx 1), 
\end{align}
\end{subequations}
and 
\begin{subequations}
\label{cv:en+power}
\begin{align}
\label{cv-en1}
&
\liminf_{n\to \infty} \en t{\pws U{\bftau_n} (t)} \geq \en t{U(t)} && \text{for all } t \in [0,T],
\\
&
\label{cv-power1}
\partial_t \en t{\pws U{\bftau_n} (t)}  \to \partial_t \en t{U (t)}   && \text{for all } t \in [0,T].
\end{align}
\end{subequations} 
 Furthermore,  for $j \in \{1,2\}$ there exist $V_j \in \rmL^1([0,T];\Spx j)$  and  $\xforcel_{j} \in \rmL^1([0,T];\Spx j^*)$ 
such that, up to a further subsequence, we have as $n\to\infty$
\begin{subequations}
\label{cvg-for-the-halves}
\begin{align}
\label{cvg-for-the-halves-velocities}
&
\frac12  \TT^{(j)}_{\bftau_n} \pws U{\bftau_n}' \weakto V_j  &&   \text{ in }\rmL^1([0,T];\Spx j),
\\
& 
\label{cvg-for-the-halves-forces}
 \TT^{(j)}_{\bftau_n} \pws \xi{\bftau_n} \weakto \xforcel_j   &&  \text{ in }\rmL^1([0,T];\Spx j^*),
 \end{align}
\end{subequations}
and there holds
\begin{equation}
\label{crucial-appB}
V_1(t)+V_2(t) = U'(t) \qquad \foraa\  t \in (0,T). 
\end{equation}
\end{corollary}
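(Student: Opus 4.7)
The plan is to combine a Dunford--Pettis extraction with a weak version of the Arzel\`a--Ascoli theorem, leveraging the a priori bounds of Proposition \ref{prop:aprio} and Corollary \ref{cor:new-aprio}, and then to use Lemma \ref{lem:repetition-operator}(4) to identify $V_1{+}V_2$ with $U'$. First, uniform integrability of $(\pws U{\bftau_n}')$ in $\rmL^1([0,T];\Spx 1)$ (from Proposition \ref{prop:aprio}) yields, by Dunford--Pettis, a subsequence and a limit $V \in \rmL^1([0,T];\Spx 1)$ with $\pws U{\bftau_n}' \weakto V$. Writing $\pws U{\bftau_n}(t) = u_0 + \int_0^t \pws U{\bftau_n}'(r)\,\dd r$ in $\Spx 1$ and testing against any $\eta \in \Spx 1^*$, the $\rmL^1$-weak convergence of the derivatives gives the pointwise weak convergence $\pws U{\bftau_n}(t) \weakto U(t) := u_0 + \int_0^t V(r)\,\dd r$ in $\Spx 1$ for every $t\in[0,T]$. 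In particular $U \in \AC([0,T];\Spx 1)$ with $U'=V$, which establishes \eqref{cvg-2}.

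Next, to promote this pointwise weak convergence to uniform convergence in $\rmC^0([0,T];\Spxw 1)$, I would apply a weak-Arzel\`a--Ascoli argument. The family $(\pws U{\bftau_n}(t))_{n,t}$ lies in $\subl{\overline C}$ and is norm-bounded in $\Spx 1$ because $\|\pws U{\bftau_n}(t)\|_1 \leq \|u_0\|_1 + \int_0^T \|\pws U{\bftau_n}'(r)\|_1\dd r$ is uniformly bounded via uniform integrability; hence it is relatively weakly compact in the reflexive space $\Spx 1$. Uniform integrability of the derivatives also yields equicontinuity in the $\Spx 1$-norm, a fortiori in the (bounded-set metrizable) weak topology. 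Arzel\`a--Ascoli then upgrades pointwise weak convergence to uniform weak convergence, giving \eqref{cvg-1}. The convergences \eqref{cv:en+power} follow immediately from Hypothesis \ref{h:2} applied to the constant sequence $t_n\equiv t$, exploiting that $\pws U{\bftau_n}(t) \in \subl{\overline C}$ and $\pws U{\bftau_n}(t)\weakto U(t)$ in $\Spx 1$.

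For \eqref{cvg-for-the-halves}, Corollary \ref{cor:new-aprio} guarantees that for each $j\in\{1,2\}$ the families $\bigl(\tfrac12\TT^{(j)}_{\bftau_n}\pws U{\bftau_n}'\bigr)_n$ and $\bigl(\TT^{(j)}_{\bftau_n}\pws \xi{\bftau_n}\bigr)_n$ are uniformly integrable in $\rmL^1([0,T];\Spx j)$ and $\rmL^1([0,T];\Spx j^*)$ respectively, so a further Dunford--Pettis extraction produces the desired limits $V_j$ and $\xforcel_j$. Finally, to obtain \eqref{crucial-appB}, I apply Lemma \ref{lem:repetition-operator}(4) to $\pws U{\bftau_n}' \weakto U'$ in $\rmL^1([0,T];\Spx 1)$, which yields $\tfrac12\bigl(\TT^{(1)}_{\bftau_n}{+}\TT^{(2)}_{\bftau_n}\bigr)\pws U{\bftau_n}' \weakto U'$ in $\rmL^1([0,T];\Spx 1)$; on the other hand, \eqref{cvg-for-the-halves-velocities} together with the continuous embedding $\Spx 2 \hookrightarrow \Spx 1$ identifies the same weak $\rmL^1([0,T];\Spx 1)$-limit with $V_1{+}V_2$, and uniqueness of weak limits forces $V_1{+}V_2=U'$ a.e.

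The main technical point I expect to require care is the weak Arzel\`a--Ascoli step: equicontinuity is straightforward from norm-equicontinuity, but verifying that the weak topology is metrizable on the relevant relatively weakly compact subset of $\Spx 1$ uniformly in $t$ must be done cleanly. Here it is precisely the combination of the pointwise energy sublevel bound (trapping the curves in $\subl{\overline C}$) and the global norm bound on $\Spx 1$ coming from the integrated derivative estimate that makes the compactness argument work within the single reflexive space $\Spx 1$.
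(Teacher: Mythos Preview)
Your proposal is correct and follows essentially the same route as the paper: uniform integrability plus Dunford--Pettis for \eqref{cvg-2} and \eqref{cvg-for-the-halves}, a weak Arzel\`a--Ascoli argument (the paper cites \cite[Prop.\,3.3.1]{AGS08}) for \eqref{cvg-1}, Hypothesis~\ref{h:2} for \eqref{cv:en+power}, and Lemma~\ref{lem:repetition-operator}(4) for \eqref{crucial-appB}. Your write-up is simply more detailed than the paper's terse proof, and your caution about the Arzel\`a--Ascoli step is well placed but unnecessary here since the cited result handles it directly.
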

\begin{proof}
Convergence \eqref{cvg-2} follows from the uniform integrability of the
family $(\pws U{\bftau}')_{\bftau \in \Lambda}\subset \rmL^1([0,T]; \Spx 1)$,
while \eqref{cvg-1} ensues, e.g., from the Arzel\`a-Ascoli compactness type
result in \cite[Prop.\,3.3.1]{AGS08}. Then,
 the energy and power convergences \eqref{cv:en+power} follow from
condition \eqref{h:2.1}.

Now, Corollary \ref{cor:new-aprio} ensures that, up to a subsequence, the
sequences $ \big(\tfrac12 \TT^{(j)}_{\bftau_n} \pws U{\bftau_n}'\big)_n$ and
$( \TT^{(j)}_{\bftau_n} \pws \xi{\bftau_n})_n$ have a weak limit in
$\rmL^1([0,T];\Spx j)$ and $\rmL^1([0,T];\Spx j^*)$, respectively.  Relation
\eqref{crucial-appB} follows from combining convergence \eqref{cvg-2} with item
(4) in Lemma \ref{lem:repetition-operator}. 
\end{proof}
 
\noindent In the following sections we will address the passage to the limit in
the `rate term' from \eqref{rate-term} and the `slope term' from
\eqref{slope-term}.

\subsection{Liminf estimate for the  rate term}
\label{ss:vel}
We are going to prove the following
\begin{equation}
\label{liminf-rate}
\text{\bfseries Claim $1$:}  \qquad \liminf_{n\to\infty}
\mathcal{D}_{\bftau_n}^{\mathrm{rate}} ( [0,T] )    \geq \int_0^T 
\calR_\eff (U'(r))  \dd r\,. 
\end{equation}
Indeed, 
\begin{equation}
\label{quoted-later}
\begin{aligned}
\liminf_{n\to\infty} \mathcal{D}_{\bftau_n}^{\mathrm{rate}} ([0,T])  &
\stackrel{(1)}{=} \liminf_{n\to\infty}  \int_0^T   \left\{
  \calR_1(\tfrac12  \TT^{(1)}_{\bftau_n} \pws U{\bftau_n}'(r)) {+}
  \calR_2(\tfrac12  \TT^{(2)}_{\bftau_n}\pws U{\bftau_n}'(r)) \right\} \dd r
\\
& \stackrel{(2)}{\geq }  \int_0^T   \left\{  \calR_1(V_1(r)){+}
  \calR_2(V_2(r)) \right\} \dd r 
 \stackrel{(3)}{\geq }  \int_0^T   \calR_{\eff} (U'(r)) \dd r\,,
\end{aligned}
\end{equation}
where {\footnotesize (1)} follows from \eqref{rephrasing-via-repet-a},
{\footnotesize (2)} is due to convergences
\eqref{cvg-for-the-halves-velocities} and the convexity and lower
semicontinuity of $\calR_j$, while {\footnotesize (3)} follows by property
\eqref{crucial-appB} and the definition of $\calR_\eff$  as an infimal
convolution. Hence, Claim 1 is established.

\subsection{Liminf estimate for the  slope term}
\label{ss:slope}

\textbf{Claim $2$:} \emph{There exists  $\xi  \in \rmL^1([0,T];\Spx 1^*)$ such that}
\begin{subequations}
\label{props-forces}
\begin{align}
\label{in-subdiff}
&
\frsubopt 1 {t}{U(t)} = \frsubopt 2 {t}{U(t)} = \{ \xi(t) \} \quad \foraa\ t \in (0,T), 
\\
\label{liminf-slope}
&
\liminf_{n\to\infty} \mathcal{D}_{\bftau_n}^{\mathrm{slope}}  ([0,T])  
 \geq  \int_0^T  \left\{  \calR_1^* ({-} \xi (r))  {+}   
\calR_2^* ({-} \xi (r))  \right\}  \dd r  \,.
\end{align}
\end{subequations}

Clearly, recalling \eqref{rephrasing-via-repet-b} and convergences
\eqref{cvg-for-the-halves-forces} we immediately have, by the convexity and
lower semicontinuity of $\calR_j^*$,
\begin{equation}
\label{lsc-already-here}
\begin{aligned}
 \liminf_{n\to\infty} \mathcal{D}_{\bftau_n}^{\mathrm{slope}} ([0,T])  &  = 
 \liminf_{n\to \infty}   \int_0^T   \left\{  \calR_1^*({-} \TT^{(1)}_{\bftau_n} \pws \xi{\bftau_n}(r)) {+}  \calR_2^*({-}\TT^{(2)}_{\bftau_n} \pws \xi{\bftau_n} (r)) 
 \right\} \dd r \\ & \geq   \int_0^T 
\left\{  \calR_1^* ({-}  \xforcel_1 (r))   {+}   \calR_2^* ({-} \xforcel_2
  (r))    \right\}  \dd r  \,,
\end{aligned}
\end{equation}
where $\xforcel_j \in \rmL^1([0,T];\Spx j^*)$  is the weak limit  of
the sequences $( \xforce{\bftau_n} j)_n$, cf.\ Corollary \ref{cor:compactness}.
In the following lines we demonstrate that $\xforcel_1$ and $\xforcel_2 $
coincide by resorting to the `singleton condition' from Hypothesis
\ref{hyp:singleton}.  For this we use  a \emph{Young measure} argument.

With this aim, it will be convenient to introduce the  `repeated curves'
\begin{equation}
\label{u-1}
 \xcur{\bftau}1: [0,T] \to  \domE_0,  
 \qquad  \xcur{\bftau}2: [0,T] \to  \domE_0,   
\end{equation}
with the convention that $\xcur{\bftau}1(t=0) = \xcur{\bftau}2(t=0) =
u_0$.  Note that $\xcur{\bftau}j$ may no longer be continuous, but  we
immediately observe that, since $\pws U{\bftau_n}\to U$ in
$\mathrm{C}^0([0,T];\Spxw 1)$, applying the continuity 
of the repetition operators (cf.\ item (3) in Lemma
\ref{lem:repetition-operator})  we have
\begin{equation}
\label{uniqueLimitU}
\xcur{\bftau_n}j \to U \text{ in }  \rmL^\infty  
 ([0,T];\Spxw 1)  \quad \text{for } j\in\{1,2\}.
\end{equation}
 By construction and using (\ref{upwc-xi} and \ref{pwc-xi}), it now 
follows that
\begin{equation}
\label{subdiff-prop}
\frsubopt j t{\xcur{\bftau_n}j (t)} = \{ \xforce{\bftau_n} j (t)  \} 
\quad \foraa\ t \in (0,T), \qquad \text{for } j \in \{1,2\}.
\end{equation}
Relying on \eqref{subdiff-prop} we will infer further information on the limits
$\xforcel_j$. 

For this, we resort to a Young-measure compactness result,
\cite[Thm.\,3.2]{RossiSavare06}, which states that, up to a (non-relabeled)
subsequence, the sequences $(\xforce{\bftau_n}1 )_n$ and
$(\xforce{\bftau_n}2 )_n$ admit two limiting Young measures
$(\mu_t^j)_{t\in (0,T)}$, with $\mu_t^1 \in \mathrm{Prob}(\Spx 1^*)$ and
$\mu_t^2 \in \mathrm{Prob}(\Spx 2^*) $ for a.a.\ $t\in (0,T)$, enjoying the
following properties for $j\in \{1,2\}$:
 \begin{subequations}
 \label{props-YM}
 \begin{enumerate}  
 \item the supports of the measures $\mu_t^j$ are contained in the set of the
   limit points of the sequences $(\xforce{\bftau_n}j(t))_n$ in the weak
   topology of $\Spx j^*$, i.e.\  for a.a.\ $t \in (0,T)$ we have 
   \begin{equation}
     \label{props-YM-1}
     \mathrm{supp}(\mu_t^j) \subset  \Ls {{\Spx j^*}}{ \{ \xforce{\bftau_n}j(t)
       \}_n  } :=\bigcap_{k\geq 1} 
     \overline{\{\xforce{\bftau_l}j(t):l\geq k\}}^\mathrm{weak} \,, 
   \end{equation}
   where the notation refers to the notion of $\limsup$ in the sense of the
   Kuratowski convergence of sets, cf.\ e.g.\ \cite{Att84VCFO};
 \item the weak limits $\xforcel_j$  of $(\xforce{\bftau_n}j)_n$ in
   $\rmL^1([0,T];\Spx j^*)$ coincide with the barycenters of the measures
   $\mu_t^j$, namely
   \begin{equation}
     \label{props-YM-2}
     \xforcel_j(t)   = \int_{X^*} \zeta \dd \mu_t^j(\zeta) \quad \foraa\ t
     \in (0,T)\,. 
   \end{equation}
 \end{enumerate}
\end{subequations}
It turns out that $\mathrm{supp}(\mu_t^j)$ is a singleton for $j=1,2$.  Indeed,
using the convergence of $\xcur{\bftau_n}j \to U$ in
$  \rmL^\infty  ([0,T]; \Spx {1,\mathrm{w}})$ by (\ref{uniqueLimitU}), the
subdifferential inclusions \eqref{subdiff-prop}, and the closedness property
\eqref{h:2.2}, we have that $ \Ls {{\Spx j^*}}{ \{ \xforce{\bftau_n}j(t) 
 \}_n } \subset \partial^{\Spx j}\cE( t,U(t))$ for a.a.\ $t\in (0,T)$
and $j \in \{1,2\}$. From \eqref{props-YM-1} we then infer 
\[
\mathrm{supp}(\mu_t^1) = \mathrm{supp}(\mu_t^2) =  
\Ls {{\Spx j^*}}{ \{ \xforce{\bftau_n}j(t)  \}_n  }  = 
\frsubopt j t{U(t)}= \{ \xi(t) \} \quad \foraa\ t \in (0,T).
\]
By \eqref{props-YM-2} we then conclude that $ \xforcel_1 = \xi = \xforcel_2$.
Then, the  liminf estimate \eqref{liminf-slope} follows from the
previously observed \eqref{lsc-already-here},  and Claim 2 is established.

\subsection{Conclusion of the proof  of Theorem \ref{th:conv-split-step}}
\label{ss:4.4}

Relying on convergences \eqref{convergences}, on the lower semicontinuity and
continuity properties of $\cE$ and $\partial_t \cE$ (cf. Hypothesis \ref{h:2}),
 and on the lower semicontinuity estimates \eqref{liminf-rate} from Claim
$1$ and \eqref{props-forces} from Claim $2$, we are in a position to take the
limit in the energy-dissipation balance \eqref{EDB-approx}, written on the
interval $[0,T]$, and thus conclude the validity of the energy-dissipation
inequality \eqref{EDB-0T-lim} along a curve $U \in \AC([0,T];\Spx 1)$ and
$\xi \in \rmL^1 ([0,T];\Spx 1^*)$.  By lower semicontinuity, we immediately
have $\sup_{t\in [0,T]} \mfE(U(t)) \leq \overline{C}$ with $\overline C>0$ from
\eqref{aprio-est-en+pow}, and thus
$\sup_{t\in [0,T]} |\partial_t \calE(t,U(t)) |  \leq C $.  Therefore,
from \eqref{EDB-0T-lim} we infer that
\[
 \int_0^T \! \big(  \calR_\eff (U'(r)){+} \calR_\eff^*({-}\xi(r)) \big) \dd r <\infty\,.
\]
Then, the quantitative Young estimate for $\calR_\eff$ yields
$\int_0^T \norm{U'(t)}1 \norms{\xi(t)}1 \dd t <\infty$.  Thus, we are in a
position to apply the chain rule for $(\calE, \frnameopt 1)$ from Hypothesis
\ref{h:chain-rule} and deduce that $t \mapsto \en t{U(t)}$ is absolutely
continuous on $[0,T]$, and that the chain rule formula \eqref{eq:48strong}
holds for the pair $(U,\xi)$. Then, Proposition \ref{p:EDP} allows us to
conclude that $(U,\xi)$ solves \eqref{solving-the-equn} and fulfills the
energy-dissipation balance \eqref{EDB-effect}.  

Now, it remains to show property \eqref{optimal-decomposition}, i.e.\ the weak
limits $V_j$ of $\big( \tfrac12 \TT^{(j)}_{\bftau_n}\pws U{\bftau_n}'\big)_n$
provide an \emph{optimal decomposition} of $U'$ in the sense that 
$V_1+V_2=U'$ as well as $\calR_1(V_1)+ \calR_2(V_2)= \calR_\eff(U')$ a.e.\ in $(0,T)$.

To see this, we first observe that the limit passage from the approximate
energy-dissipation balance \eqref{EDB-approx} to the limit energy-dissipation
balance \eqref{EDB-effect} on the interval $[0,T]$ ultimately implies that the
liminf estimates \eqref{liminf-rate} and \eqref{liminf-slope} turn into
convergences (see e.g.\ \cite[Thm.\,4.4]{MRS2013} or \cite[Thm.\,3.11]{MRS13}
for the standard argument).  In particular, combining the convergences
for the rate term with \eqref{quoted-later} we conclude 
\[
 \int_0^T  \!\! \calR_\eff(U'(r))  \dd r 
= \lim_{n\to\infty}\mathcal{D}_{\bftau_n}^{\mathrm{rate}}  ([0,T]) 
\geq  \int_0^T \!\! \{ \calR_1(V_1(r)) {+}  \calR_2(V_2(r)) \} \dd r  \geq
  \int_0^T  \!\! \calR_\eff(U'(r))  \dd r\,, 
\]which turns the above relations into a chain of 
equalities.
Hence, from 
\[
 \lim_{n\to\infty}  \int_0^T \!\! \left\{  \calR_1(\tfrac12
   \TT^{(1)}_{\bftau} \pws  U{\bftau}'(r)) {+}   \calR_2(\tfrac12
   \TT^{(2)}_{\bftau}\pws U{\bftau}'(r))  \right\} \dd r 
=  \int_0^T \!\! \left\{  \calR_1(V_1(r)){+} \calR_2(V_2(r)) \right\} \dd r
\]
we obtain the individual convergences \eqref{optimal-rates} on the interval
$[0,T]$, as the liminf of each integral term on the left-hand side is estimated
from below by the corresponding term on the right-hand side.

Finally, recall that $U'= V_1+V_2$ a.e.\ in $(0,T)$, so that
$\calR_\eff(U') \leq \calR_1(V_1) + \calR_2(V_2) $ a.e.\ in $(0,T)$. Combining
this with the fact that $\int \calR_\eff (U') \dd r $ equals
$ \int \{ \calR_1(V_1){+}\calR_2(V_2)\} \dd r$ ultimately leads to the desired
optimality property $\calR_\eff(U') = \calR_1(V_1) + \calR_2(V_2) $ a.e.\ in
$(0,T)$.  Hence, we conclude the validity of \eqref{optimal-decomposition}, and
thus, the proof of Theorem \ref{th:conv-split-step}.  \QED

Finally, we briefly comment on the enhanced convergences \eqref{enh-cvg}. It is
clear that it suffices to show the convergence results on intervals of the form
$[0,t]$. The technical issue arises because a general $t\in(0,T)$ is in general
not aligned with the partition $\mathscr{P}_{\bftau} $ of
\eqref{non-uniform-partition}.

To show the enhanced convergence, we recall the time-interpolants
$\pwc \sft \bftau: [0,T]\to [0,T]$ from \eqref{time-interpolants}, which
satisfy $\pwc \sft \bftau(t)\to t$ for $\bftau\to 0$. With this, we repeat
the argument from above for proving the convergence of the rate and
slope terms, while passing from the approximate energy-dissipation balance
\eqref{EDB-approx} on $[0,t^k_\bftau]$ to the limit energy-dissipation balance
\eqref{EDB-effect} on the interval $[0,t]$.  Using the liminf estimates for 
energy and powers in \eqref{cv-en1} and \eqref{cv-power1}, it again 
suffices to show a liminf estimate for the rate and slope terms on intervals
$[0,t]$. For convenience, we consider only the rate term, because the slope
can be treated analogously. In particular, it suffices to show the following 
liminf estimate:
\begin{align*}
\liminf_{\bftau\to 0}\int_0^{\pwc \sft \bftau(t)}\calR_j(\tfrac12
   \TT^{(j)}_{\bftau} \pws  U{\bftau}'(r))\dd r\geq \int_0^t \calR_j(V_j(r))\dd r, 
\end{align*}
where
$\tfrac12 \TT^{(j)}_{\bftau} \pws U{\bftau}'=: V_{\bftau}^{(j)}\weakto V_j$ in
$\rmL^1([0,T];\Spx j)$.  Introducing
$W_{\bftau}^{(j)}:=\chi_{[0,\pwc \sft \bftau(t)]}V_{\bftau}^{(j)}\in
\rmL^1([0,T];\Spx j)$, we get
\begin{align*}
  \int_0^{\pwc \sft \bftau(t)}\calR_j(\tfrac12
  \TT^{(j)}_{\bftau} \pws  U{\bftau}'(r))\dd r &= \int_0^T \chi_{[0,\pwc \sft
    \bftau(t)]}(r) \calR_j(V_{\bftau}^{(j)}(r))\dd r \\
  & =\int_0^T \calR_j(\chi_{[0,\pwc \sft \bftau(t)]}(r)V_{\bftau}^{(j)}(r))\dd r 
  = \int_0^T  \calR_j(W_{\bftau}^{(j)}(r))\dd r ,
\end{align*}
where we used $\cR_j(0)=0$. Moreover,
$\|W_{\bftau}^{(j)}(r)\|_{\Spx j} \leq \|V_{\bftau}^{(j)}\|_{\Spx j}$ allows us
to apply the compactness argument in Corollary \ref{cor:new-aprio} such that
there is $W_j\in \rmL^1([0,T];\Spx j)$ with $W_{\bftau}^{(j)}\weakto W_j$ in
$\rmL^1([0,T];\Spx j)$. Using that $\pwc \sft \bftau(t)\to t$ as $\bftau\to 0$,
we find $W_j=\chi_{[0,t]}V_j$ and obtain
\begin{align*}
  \liminf_{\bftau\to 0}&\int_0^{\pwc \sft \bftau(t)}\calR_j(\tfrac12
  \TT^{(j)}_{\bftau} \pws  U{\bftau}'(r))\dd r =  \liminf_{\bftau\to 0} \int_0^T
  \calR_j(W_{\bftau}^{(j)}(r))\dd r\\ 
  &\geq \int_0^T \calR_j(W_j(r))\dd r =\int_0^T \calR_j(\chi_{[0,t]}V_j(r))\dd r
  = \int_0^t \calR_j(V_j(r))\dd r \,.
   \end{align*}
With this, the enhanced convergences \eqref{enh-cvg} and \eqref{optimal-rates} 
are established. 

\section{Alternating Minimizing Movements}
\label{s:AltMinMov}

In this section we discuss an extension of our convergence result in Theorem
\ref{th:conv-split-step}.  Namely, we show that the splitting scheme can be
combined with the Minimizing-Movement approximations of the single-dissipation
gradient systems $(\Spx j,\calE,\tdis j)$ with
$\tdis j=2\cR_j(\frac12\,\cdot)$. 

\subsection{Setup and convergence result}
\label{su:AltMM.Setup} 

 More precisely, for each $j=1,2$ we set up
the Minimizing Movement scheme and construct discrete solutions to the
subdifferential inclusions \eqref{Cauchy-pb-1} and \eqref{Cauchy-pb-2} by
solving the time-incremental minimization problems involving the rescaled
potentials $\tdis1$ and $\tdis 2$ in an alternating manner.  Then, we define
approximate solutions by suitably interpolating the discrete solutions.

Hence, let $\mathscr{P}_{\bftau}$ be a (possibly non-uniform) partition as in
\eqref{non-uniform-partition}; recall that the sub-interval
$(\dis t{\bftau}{k-1}, \dis t{\bftau}{k})$ is split into semi-intervals via
\[
  (\dis t{\bftau}{k-1}, \dis t{\bftau}{k}) = \lint{k}{\bftau} \cup
  \rint{k}{\bftau} \quad \text{with } \lint{k}{\bftau} =
    \big(\dis t{\bftau}{k-1},\thalf \big] \ \text{ and } \ 
    \rint{k}{\bftau} = \big(\thalf , \dis t \bftau k \big],
\]
where $\thalf = \dis t{\bftau}{k-1}+\tfrac {\tau_k}2$.  Starting from an
initial datum $u_0 \in \domE_0$, we define the \emph{piecewise constant}
time-discrete solutions $\pwc U{\bftau} :[0,T] \to \domE_0$ in the following
way: We set $\pwc U{\bftau}(0): = u_0$ and, for $t\in (0,T)$, we define 
\begin{subequations}
\label{min-schemes-i}
\begin{align}
\label{min-schemes-1}
& 
\text{for } t \in  \lint{k}{\bftau} {:}  && 
\pwc U{\bftau}(t) : = U_k^1
\\
 & && \text{ with } U_k^1  \in \Argmin_{U \in  \Spx 1} \left\{
   \tfrac{\tau_k}2\, \widetilde{\calR}_1\! \left(  \tfrac2{\tau_k}\left(
       U{-}\pwc U{\bftau} ( \dis t{\bftau}{k-1}  
  )  \right) \right)  {+} \en {\thalf}U 
 \right\} \,, \hspace*{2em}
 \nonumber
 \\
 \label{min-schemes-2}
 & 
\text{for } t \in  \rint{k}{\bftau}  {:}  && 
\pwc U{\bftau}(t) : = U_k^2
\\
 & &&  \text{with } U_k^2  \in \Argmin_{U \in  \Spx 2} \left\{ \tfrac{\tau_k}2
   \,\widetilde{\calR}_2 \!\left(  \tfrac2{\tau_k} \left( U{-}\pwc
       U{\bftau}  (\thalf
 )  \right) \right)  {+} \en {\dis t{\bftau}{k}}U 
 \right\} \,.
 \nonumber
\end{align}
\end{subequations}
 
We also define the piecewise constant interpolant
$\upwc U{\bftau} :[0,T] \to \domE_0$ by
\begin{equation}
\label{right-continuous-pwc}
\upwc U{\bftau}(t): =
\left\{ 
\begin{array}{ll}
u_0  & \text{for } t \in (0,\tfrac{\tau_1}2],
\smallskip
\\
\pwc U{\bftau}\left(t - \tfrac{\wt\tau(t)}2 \right) & \text{for } t \in
(\tfrac{\tau_1}2, T] 
\end{array}
\right.
\end{equation}
(cf.\ \eqref{tau-dependent-indexes} for the notation $\wt\tau(t)$). 

Furthermore, we introduce the \emph{piecewise linear} interpolant
$\pwl U{\bftau}:[0,T]\to \domE_0$, i.e.\
\begin{equation}
\label{pwl}
\pwl U{\bftau}(t): =
\left\{
\begin{array}{cl}
\displaystyle 
 \frac{t{-} t_{\bftau}^{k-1}}{\tau_k/2}\: \pwc U{\bftau}(t)+  \frac{ \thalf
   {-}t}{\tau_k/2} \:\upwc U{\bftau}(t)   &  \text{for  } t \in [t_{\bftau}^{k-1},
  \, \thalf] \,,   \medskip
 \\
 \displaystyle 
 \frac{t{-} \thalf}{\tau_k/2}\: \pwc U{\bftau}(t)+  \frac{
   t_{\bftau}^{k}{-}t}{\tau_k/2} \: \upwc U{\bftau}(t)  
  &  \text{for } t \in [\thalf,\, t_{\bftau}^{k}] \,.
 \end{array}
 \right.
\end{equation}
Thus, the piecewise constant and linear interpolants satisfy the Euler-Lagrange
equations for the minimum problems \eqref{min-schemes-i}, namely
\[
\begin{aligned}
& 
\partial \widetilde{\calR}_1 \big(\pwl U{\bftau}'(t) \big) + 
 \frsubopt 1 {\thalf}
{\pwc U{\bftau}(t)} \ni 0  && \quad \text{in } \Spx 1^* 
 &&\foraa\   t  \in  \lint{k}{\bftau},
\\
& \partial \widetilde{\calR}_2 \big(\pwl U{\bftau}'(t) \big) + 
\frsubopt 2 {   t_{\bftau}^{k}  }
{\pwc U{\bftau}(t)} \ni 0   && \quad \text{in } \Spx 2^*  && \foraa\   t  \in
\rint{k}{\bftau}. 
\end{aligned}
\]
Nonetheless, like in the time-continuous setup we will not directly pass to the
limit in the above inclusions but instead resort to a discrete
energy-dissipation inequality that will act as a proxy of the
energy-dissipation balance \eqref{EDB-approx} and will be at the core of the
proof of the following convergence result.

\begin{theorem}[Alternating Minimizing Movements]
\label{th:conv-split-step-MM}
In addition to the assumptions of Theorem \ref{exist:gflows}, assume the
singleton condition Hypothesis \ref{hyp:singleton}.  Starting from an initial
datum $u_0 \in \domE_0$, define the curves
$(\pwc U{\bftau})_{\bftau \in \Lambda}$ and
$(\pwl U{\bftau})_{\bftau \in \Lambda}$ as in (\ref{min-schemes-i}) and
(\ref{pwl}).

Then, for any sequence $(\bftau_n)_n$ with $\lim_{n\to\infty} |\bftau_n | =0$
there exist a (non-relabeled) subsequence, a curve $U \in \AC([0,T];\Spx 1)$
and functions $V_j \in \rmL^1 ([0,T];\Spx j)$, $j =1,2$, such that $U(0)=u_0$,
the following convergences hold as $n\to\infty$
\begin{subequations}
\label{cvgs-Thm71}
\begin{align}
\label{pointwise-cvg-discr}
\hspace*{1em} &
 \pwc U{\bftau_n}(t) \weakto U(t) \ \text{ and } \ 
 \pwl U{\bftau_n}(t) \weakto U(t)  &&  
   \text{  in } \Spx 1  &&  \text{for all } t \in [0,T],\hspace*{1em} 
  \\
  & 
  \label{cvg-repeated-rates-discr}
 \hspace*{3em} \frac12  \TT^{(j)}_{\bftau_n}(\pwl U{\bftau_n}') \weakto V_j  &&
 \text{ in } \rmL^1([0,T];\Spx j)  &&  \text{for } j=1,2, 
 \\
 & 
 \label{cvg-U'-discr}
 \hspace*{6.1em}  \pwl U{\bftau_n}' \weakto U' =V_1{+}V_2   && \text{
   in } \rmL^1([0,T];\Spx 1), &&  
 \end{align}
\end{subequations}
and there exists a function $\xi \in\rmL^1([0,T];\Spx 1^*)$ such that the pair
$(U,\xi)$ solves the subdifferential inclusion \eqref{solving-the-equn} and
fulfills the energy-dissipation balance \eqref{EDB-effect}. Furthermore, the
functions $(V_1,V_2) $ provide an \emph{optimal decomposition} for $U'$ in the
sense of \eqref{optimal-decomposition}.
\end{theorem}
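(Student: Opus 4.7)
My plan is to mirror the proof of Theorem \ref{th:conv-split-step}, replacing the per-semi-interval continuous energy-dissipation balance by a discrete energy-dissipation inequality. As anticipated in the Introduction, the key tool that avoids the $\lambda$-convexity assumption on $\calE$ is De Giorgi's variational interpolant. On a left semi-interval $\lint k\bftau$, writing $U^2_{k-1} := \pwc U\bftau(\dis t\bftau{k-1})$, I would introduce
\[
  \pwM U\bftau(t) \in \Argmin_{U\in\Spx1}\Bigl\{(t{-}\dis t\bftau{k-1})\,\widetilde\calR_1\!\Bigl(\tfrac{U-U^2_{k-1}}{t{-}\dis t\bftau{k-1}}\Bigr) + \calE(\thalf, U)\Bigr\},
\]
and analogously on each right semi-interval (with $\widetilde\calR_2$ and endpoint $\dis t\bftau k$), selecting Fr\'echet subgradients $\widetilde\xi_\bftau(t)$ of $\calE$ at $\pwM U\bftau(t)$ from the Euler--Lagrange inclusion. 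A standard computation (cf.\ \cite{MRS2013,AGS08}) yields a De Giorgi inequality of the form
\[
  \calE(\thalf, U^1_k) + \int_{\dis t\bftau{k-1}}^{\thalf}\!\bigl\{\widetilde\calR_1(\pwl U\bftau'(r)){+}\widetilde\calR_1^*({-}\widetilde\xi_\bftau(r))\bigr\}\dd r \leq \calE(\dis t\bftau{k-1}, U^2_{k-1}) + \text{power contribution},
\]
and an analogous inequality on each right semi-interval. Summing over $k$ and handling the power terms via \eqref{gpower-control} gives a discrete EDI of exactly the structure of \eqref{EDB-approx}, with rate term \eqref{rate-term} evaluated on $\pwl U\bftau'$ and slope term \eqref{slope-term} on the variational-interpolant force $\widetilde\xi_\bftau$.

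A priori estimates and compactness then follow as in Proposition \ref{prop:aprio} and Corollary \ref{cor:compactness}: Gr\"onwall delivers a uniform bound on $\sup_t \mfE(\pwc U\bftau(t))$ (and the same bound for $\pwM U\bftau$, since it enters a minimum problem with the old value as competitor), the rate and slope contributions are uniformly controlled, and superlinearity of $\calR_j,\calR_j^*$ yields uniform integrability of $\chi_\bftau \pwl U\bftau'$ in $\rmL^1([0,T];\Spx1)$ and of $(1{-}\chi_\bftau)\pwl U\bftau'$ in $\rmL^1([0,T];\Spx2)$. Arzel\`a--Ascoli combined with Lemma \ref{lem:repetition-operator}(4) then gives a limit $U\in \AC([0,T];\Spx1)$ and weak limits $V_j$ of $\tfrac12\TT^{(j)}_{\bftau_n}\pwl U{\bftau_n}'$ fulfilling \eqref{pointwise-cvg-discr}--\eqref{cvg-U'-discr}; the uniform integrability of the rates forces $\pwc U\bftau$, $\upwc U\bftau$ and $\pwM U\bftau$ to share the same pointwise weak limit $U(t)$.

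The liminf passage in the rate term is identical to \eqref{quoted-later}. The main obstacle I anticipate lies in the slope term: the forces $\widetilde\xi_\bftau$ are attached to the variational interpolant $\pwM U\bftau$, not to $\pwl U\bftau$ or $\pwc U\bftau$. I would first verify that $\pwM U{\bftau_n} \to U$ in $\rmL^\infty([0,T];\Spx{1,\mathrm{w}})$ by the same compactness argument used for $\pwc U\bftau$, and then apply the Young-measure tool \cite[Thm.\,3.2]{RossiSavare06} to the repeated force sequences $\TT^{(j)}_{\bftau_n}\widetilde\xi_{\bftau_n}$, which are uniformly integrable in $\rmL^1([0,T];\Spx j^*)$ by Corollary \ref{cor:new-aprio}. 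The closedness Hypothesis \ref{hyp:en-2} places the support of the limiting Young measures inside $\frsubopt j t{U(t)}$, and the singleton Hypothesis \ref{hyp:singleton} collapses both limits $\xforcel_1$ and $\xforcel_2$ into a single $\xi\in\rmL^1([0,T];\Spx1^*)$, exactly as in Section \ref{ss:slope}.

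Combining the lower-semicontinuity properties of $\calE$ and $\partial_t\calE$ from Hypothesis \ref{h:2} with the two liminf inequalities, passage to the limit in the discrete EDI produces the upper estimate \eqref{EDB-0T-lim} for $(U,\xi)$. The QYE for $\cR_\eff$ (Hypothesis \ref{hyp:QYE}) ensures $\int_0^T \|U'\|_1\|\xi\|_{1,*}\dd r<\infty$, so the chain rule Hypothesis \ref{h:chain-rule} applies and Proposition \ref{p:EDP} yields \eqref{solving-the-equn} together with the energy-dissipation balance \eqref{EDB-effect}. Finally, the optimal decomposition \eqref{optimal-decomposition} is obtained verbatim from Section \ref{ss:4.4}: the limit EDB forces the liminf inequalities to be equalities, which combined with $\cR_\eff(U') \leq \cR_1(V_1){+}\cR_2(V_2)$ a.e.\ gives $\cR_1(V_1){+}\cR_2(V_2) = \cR_\eff(U')$ almost everywhere in $(0,T)$.
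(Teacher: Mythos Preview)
Your proposal is correct and follows essentially the same route as the paper: introduce De Giorgi's variational interpolant with associated forces, derive the discrete energy-dissipation inequality (the paper's Proposition \ref{pr:AlTMinAprio}), obtain a priori estimates and compactness exactly as in Proposition \ref{prop:aprio}/Corollary \ref{cor:compactness}, and then repeat verbatim the liminf arguments of Sections \ref{ss:vel}--\ref{ss:4.4}. The only cosmetic difference is that the paper defines $\pwM U\bftau$ with the energy evaluated at the \emph{running} time $t$ (cf.\ \eqref{def-var-interpolant}) rather than at the fixed endpoint $\thalf$, so that the power term $\int \partial_t\calE(r,\pwM U\bftau(r))\dd r$ appears directly in \eqref{discr-endiss-1}; your fixed-time version yields instead a power contribution along $\upwc U\bftau$, which is equally harmless.
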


\subsection{Proof of Theorem \ref{th:conv-split-step-MM}}
\label{su:ProofThm6.1}

We start by deriving the discrete analogue (in fact, an \emph{inequality}) of
the energy-dissipation balance \eqref{EDB-approx}.  For this, we need to
bring into the picture a further interpolant, commonly known as the
\emph{variational interpolant}, which was first introduced in the framework of
the Minimizing Movement theory for metric gradient flows by \textsc{E.\ De
  Giorgi}, cf.\ \cite{Ambrosio95, AGS08}.  In the present context, the
interpolant $\pwM U{\bftau}: [0,T]\to \domE_0$ is defined in the following way:
$\pwM U{\bftau}(0):=u_0$ and, for $t>0$,
\begin{align}
\nonumber
\hspace*{0.5em}& 
\text{for } t \in  \lint{k}{\bftau}, \ \ t = \dis t
{\bftau}{k-1}{+}r :  && 
\pwM U{\bftau}(t)  \in \Argmin_{U \in  X} \left\{ r \widetilde{\calR}_1\!
  \left( \tfrac1r  \left(U{-}\pwc U{\bftau}  (t^{k-1}_\bftau) \right)
  \right)  {+} \en {t}U \right\};\hspace*{0.5em} 
\smallskip
\\[-0.3em]
& && && \label{def-var-interpolant}
\\[-0.2em]
& \nonumber
\text{for } t \in  \rint{k}{\bftau}, \ t =\thalf{+}r{:} && 
\pwM U{\bftau}(t)  \in \Argmin_{U \in  X} \left\{ r \widetilde{\calR}_2 \left(
    \tfrac1r \left( U{-}\pwc U{\bftau}  (\thalf)  
 \right) \right)  {+} \en {t}U \right\}.  
\end{align}
The existence of a \emph{measurable} selection in the sets of minimizers in 
 \eqref{def-var-interpolant}
 follows by, e.g., \cite[Cor.\,III.3, Thm.\,III.6]{Castaing-Valadier77}. Since, for $t= \thalf $ and for $t= \dis t{\bftau}k$
the minimum problems   in \eqref{min-schemes-i} coincide with those in \eqref{def-var-interpolant}, we may assume that 
\[
\pwc U{\bftau}(s) = \upwc U{\bftau}(s) =  
\pwl U{\bftau}(s)  =\pwM U{\bftau}(s) \qquad \text{for } s =
 \thalf, \  s= \dis t{\bftau}k \quad\text{and } k = 1,\ldots, N_{\bftau}\,.
\]
Furthermore, by \cite[Thm.\,8.2.9]{Aubin-Frankowska}, with $\pwM U{\bftau}$ we
can associate a measurable function $\pwM \xi{\bftau}: (0,T) \to \Spx 2^*$
fulfilling the Euler equation for the minimum problems
\eqref{def-var-interpolant}, i.e.\
\[
\pwM \xi{\bftau}(t) \in 
\left\{
\begin{array}{ll}
  \displaystyle
\frsubopt 1 t {\pwM U{\bftau}(t)} \cap \left( {-} \partial \widetilde{\calR}_1 
\left( \tfrac 1{t{-}\dis t{\bftau}{k-1}} \big( \pwM U{\bftau}(t){-} \upwc
    U{\bftau}  (\dis t{\bftau}{k-1})  \big)  \right) \right)  &
\text{for } t \in  \lint{k}{\bftau},
\smallskip
\\
  \displaystyle
\frsubopt 2 t {\pwM U{\bftau}(t)} \cap \left( {-} \partial \widetilde{\calR}_2 
\left( \tfrac1{t{-}\thalf} \big( \pwM U{\bftau}(t){-} \upwc U{\bftau} 
 (\thalf )  \big)
\right)  \right) &  
\text{for } t \in  \rint{k}{\bftau},
\end{array}
\right.
\]
for $k=1,\ldots, N_{\bftau}$.  Then, we may apply \cite[Lem.\,6.1]{MRS2013} (see
also \cite{MieRos20?DL}), and conclude that that interpolants $\pwc U{\bftau}$,
$\pwl U{\bftau}$, $\pwM U{\bftau}$, and $\pwM\xi{\bftau}$ fulfill, on the
 semi-intervals  $ \lint{k}{\bftau} = (\dis t{\bftau}{k-1}, \thalf]$, the
following estimate 
\begin{equation}
\label{discr-endiss-1}
\begin{aligned}
&
 \en {\thalf}{\pwc U{\bftau}(\thalf)} +
\int_{\dis t{\bftau}{k-1}}^{\thalf}\!\left\{ \widetilde{\calR}_1 (\pwl
  U{\bftau}'(r)) +\widetilde{\calR}_1^* ({-}\pwM
\xi{\bftau}(r)) \right\} \dd r  
\\
&
\leq  \en {\dis t{\bftau}{k-1}}{\pwc U{\bftau}(\dis t{\bftau}{k-1})} +
\int_{\dis t{\bftau}{k-1}}^{\thalf}  \partial_t \en r{\pwM U{\bftau}(r)} \dd
r\,. 
\end{aligned}
\end{equation}
 The analogue holds on 
$\rint{k}{\bftau}= (\thalf, \dis t{\bftau}k]$, involving the dissipation
potential $\widetilde{\calR}_2$.

Relying on \eqref{discr-endiss-1} and its analogue on the intervals
$\rint k{\bftau}$ we can deduce  a discrete energy-dissipation inequality
\eqref{EDB-discr} below, replacing the time-continuous energy-dissipation
balance \eqref{EDB-approx}.   In order to state it in a compact form, we
introduce the discrete analogues of the \emph{rate} and \emph{slope} terms from
\eqref{rate+slope-terms}.  With slight abuse, we will denote them with the same
symbols used in \eqref{rate+slope-terms}: 
\begin{align*}
\mathcal{D}_{\bftau}^{\mathrm{rate}}  ([0,T]) & \ := \ 
    \int_0^T  \left\{
\chi_{\bftau} (r)\  \tdis1 \big(\pwl U{\bftau}'(r)\big) +
(1{-}\chi_{\bftau}(r) ) \,\tdis2 \big(\pwl U{\bftau}'(r)\big) \right\} \dd r 
\\
 &   \stackrel{\eqref{rephrasing-via-repet-a}}{=}     \int_0^T  
\left\{  \calR_1(\tfrac12  \TT^{(1)}_{\bftau} \pwl U{\bftau}'(r)) +
  \calR_2(\tfrac12  \TT^{(2)}_{\bftau}\pwl U{\bftau}'(r)) \right\} \dd r, 
\\
\mathcal{D}_{\bftau}^{\mathrm{slope}} ([0,T])  & \ : = \       \int_0^T  
\left\{ \chi_{\bftau}(r) \, \tdis1^* ({-}\pwM \xi{\bftau} (r))  +
  (1{-}\chi_{\bftau}(r) ) \,\tdis2^* ({-}\pwM \xi{\bftau} (r))  \right\}  \dd r 
\\
  & \stackrel{\eqref{rephrasing-via-repet-b}}{=}     \int_0^T    
\left\{  \calR_1^*({-} \TT^{(1)}_{\bftau} \pwM \xi{\bftau}(r)) +
  \calR_2^*({-}\TT^{(2)}_{\bftau} \pwM \xi{\bftau} (r)) \right\} \dd r\,.
\end{align*}
%
The following result (to be compared with Proposition \ref{prop:aprio} and
Corollary \ref{cor:new-aprio}) collects all of the a priori estimates stemming
from \eqref{EDB-discr}.

\begin{proposition}
\label{pr:AlTMinAprio}
The interpolants $\pwc U{\bftau} $, $\pwl U{\bftau}$, and $\pwM U{\bftau}$ and
$\pwM\xi{\bftau}$ fulfill the \emph{discrete energy-dissipation inequality}
\begin{equation}
\label{EDB-discr}
\begin{aligned}
  & \en t{\pwc U\bftau(t)} + \mathcal{D}_{\bftau}^{\mathrm{rate}} ([s,t])
  +\mathcal{D}_{\bftau}^{\mathrm{slope}} ([s,t])  \leq   \en s{\pwc U\bftau(s)}
  +\int_s^t \partial_t \en r{\pwM U\bftau(r)} \dd r
\end{aligned}
\end{equation}
for all $ 0 \leq s \leq t \leq T$.  Moreover, there exists a positive constant
$\overline{C}>0$ such that the following estimates hold for all
$\bftau \in \Lambda$:
\begin{subequations}
\label{aprio-est-discr}
\begin{align}
\label{discr-est-en+pow}
&
\sup_{t\in [0,T]} \mfE(\pwc U\bftau(t))  \leq \overline{C},  \qquad   \sup_{t\in [0,T]}  \mfE(\pwM U\bftau(t))  \leq \overline{C},  \qquad   \sup_{t\in [0,T]} 
|\partial_t \en t{\pwM U\bftau(t)} |  \leq \overline{C}, 
\\
& 
\mathcal{D}_{\bftau}^{\mathrm{rate}} ([0,T]) +\mathcal{D}_{\bftau}^{\mathrm{slope}} ([0,T]) \leq \overline{C}.
\end{align}
\end{subequations}
Thus, the families
$(\pwl U{\bftau}')_{\bftau \in \Lambda} \subset \rmL^1([0,T];\Spx1)$, \
$( \TT^{(j)}_{\bftau} \pwl U{\bftau}')_{\bftau \in \Lambda} \subset
\rmL^1([0,T];\Spx j)$, and
$( \TT^{(j)}_{\bftau} \pwM \xi{\bftau})_{\bftau \in \Lambda} $ $ \subset
\rmL^1([0,T];\Spx j^*)$, for $j\in \{1,2\}$, are uniformly integrable.
Finally, as $ |\bftau| \downarrow 0 $, we have  
\begin{equation}
\label{e:consistency}
\sup_{t\in [0,T]} \norm{ \pwc U{\bftau}(t){-}  \upwc U{\bftau}(t)}1+ \sup_{t\in
  [0,T]} \norm{ \pws U{\bftau}(t){-}  \pwc U{\bftau}(t)}1 + \sup_{t\in [0,T]}
\norm{ \pwM U{\bftau}(t){-}  \upwc U{\bftau}(t)}1   = \mathrm{o}(1) \,. 
\end{equation}
\end{proposition}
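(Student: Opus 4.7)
The plan is to sum the local energy-dissipation inequality \eqref{discr-endiss-1} over the semi-intervals composing $[s,t]$, derive the a priori bounds by a Gr\"onwall argument coupled with the minimality property of the variational interpolant, and finally extract the uniform integrability statements and the consistency estimate from the superlinear growth of the dissipation potentials provided by Lemma \ref{l:unique-psi}.

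For \eqref{EDB-discr}, I would sum the local estimate \eqref{discr-endiss-1} on $\lint k\bftau$ and its counterpart on $\rint k\bftau$ (with $\tdis 1$ replaced by $\tdis 2$, obtained by the same argument taken from \cite{MRS2013}) telescopically over all $k$ with $\lint k\bftau \cup \rint k\bftau \subset [s,t]$. Boundary contributions from the two semi-intervals containing $s$ and $t$ are handled by applying the same local estimate to the restricted sub-intervals; this is permitted because $\pwM U{\bftau}$ is defined at every time through \eqref{def-var-interpolant}.

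Setting $s=0$ in \eqref{EDB-discr}, dropping the non-negative rate and slope terms, and applying \eqref{gpower-control} yield
\begin{equation*}
\calE(t,\pwc U\bftau(t)) \leq \calE(0,u_0) + C_\# \int_0^t \mfE(\pwM U\bftau(r))\dd r.
\end{equation*}
Testing the minimization \eqref{def-var-interpolant} against the initial value $\upwc U\bftau(t)$ and using $\tdis j(0)=0$ gives $\calE(t,\pwM U\bftau(t)) \leq \calE(t,\upwc U\bftau(t))$, so by \eqref{Gronwall-est} one can bound $\mfE(\pwM U\bftau)$ by $\mfE(\pwc U\bftau)$ up to a multiplicative constant. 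A Gr\"onwall argument then produces the first two bounds in \eqref{discr-est-en+pow}; the power bound follows by \eqref{gpower-control}, and the rate-plus-slope bound by reinserting the energy bound into \eqref{EDB-discr}. Closing this Gr\"onwall argument in the presence of the two distinct interpolants $\pwc U\bftau$ (in the energy terms) and $\pwM U\bftau$ (in the power integrand) is the main technical obstacle.

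For the uniform integrability claims, Lemma \ref{l:unique-psi} provides a convex superlinear $\unipsi$ satisfying $\calR_j(v) \geq \unipsi(\norm v j)$ and $\calR_j^*(\xi) \geq \unipsi(\norms \xi j)$; combined with \eqref{rephrasing-via-repet} and the dissipation bound, this gives
\begin{equation*}
\int_0^T \unipsi\big(\tfrac12 \norm{\TT^{(j)}_\bftau \pwl U\bftau'(r)}j\big) \dd r + \int_0^T \unipsi\big(\norms{\TT^{(j)}_\bftau \pwM\xi\bftau(r)}j\big) \dd r \leq \overline C.
\end{equation*}
The de la Vall\'ee-Poussin criterion with the superlinear function $\unipsi$ then yields uniform integrability of $(\TT^{(j)}_\bftau \pwl U\bftau')_\bftau$ and of $(\TT^{(j)}_\bftau \pwM\xi\bftau)_\bftau$ for $j\in\{1,2\}$; for $(\pwl U\bftau')_\bftau \subset \rmL^1([0,T];\Spx 1)$, writing $\pwl U\bftau' = \chi_\bftau \TT^{(1)}_\bftau \pwl U\bftau' + (1{-}\chi_\bftau) \TT^{(2)}_\bftau \pwl U\bftau'$ and invoking the embedding $\norm{\cdot}1 \leq C_\rmN \norm{\cdot}2$ gives the conclusion.

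Finally, for \eqref{e:consistency}, on each semi-interval $\lint k \bftau$ or $\rint k \bftau$ the values of $\pwc U\bftau$, $\upwc U\bftau$, and $\pws U\bftau$ are joined by the piecewise linear interpolant $\pwl U\bftau$, so that their pairwise differences equal integrals of $\pwl U\bftau'$ over sub-intervals of length at most $|\bftau|/2$; the uniform integrability from the previous step then makes these $\mathrm{o}(1)$ uniformly in $t$. The remaining difference $\pwM U\bftau {-} \upwc U\bftau$ is handled by an additional minimality argument: testing \eqref{def-var-interpolant} against $\upwc U\bftau(t)$ and using the superlinear lower bound $\tdis j(v) \geq 2\unipsi(\norm v j/2)$ together with the energy bound \eqref{discr-est-en+pow} yields a pointwise estimate $\norm{\pwM U\bftau(t){-}\upwc U\bftau(t)}1 \leq |\bftau|\,\unipsi^{-1}(\overline C/|\bftau|)$, which is $\mathrm{o}(1)$ as $|\bftau|\to 0$ by sublinearity of $\unipsi^{-1}$.
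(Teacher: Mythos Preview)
Your approach matches the paper's: it too obtains \eqref{EDB-discr} by concatenating \eqref{discr-endiss-1} and its $\tdis 2$-analogue on $\rint k\bftau$, invokes the Gr\"onwall argument of Proposition~\ref{prop:aprio} (supplemented by the minimality of $\pwM U\bftau$ to bound $\mfE(\pwM U\bftau)$), and defers to \cite[Prop.\,6.3]{MRS2013} for the consistency of the variational interpolant, which you have spelled out explicitly via the superlinearity of~$\unipsi$.

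One caveat on the telescoping step: the local estimate \eqref{discr-endiss-1} obtained from \cite[Lem.\,6.1]{MRS2013} is available on sub-intervals of the form $[\dis t\bftau{k-1},r]$ \emph{starting at a node}, not on arbitrary restricted sub-intervals. Your justification that ``$\pwM U\bftau$ is defined at every time'' does not fix this, because for $s$ in the interior of a semi-interval one has $\pwc U\bftau(s)=U_k^1$ rather than the previous node value $U_{k-1}^2$ appearing on the right-hand side of \eqref{discr-endiss-1}. The inequality \eqref{EDB-discr} is therefore most cleanly established for $s$ a node of the semi-partition (and in particular on $[0,T]$), which is all that is required for the a priori bounds and the subsequent limit passage; the paper's own proof is equally brief on this point.
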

\begin{proof}
  Clearly, estimates \eqref{aprio-est-discr} follow from \eqref{EDB-discr} via
  the same arguments as in the proof of Proposition \ref{prop:aprio}. The
  estimate for $\sup_{t\in [0,T]} \mfE(\pwM U\bftau(t)) $ can be retrieved from
  the  discrete  energy-dissipation inequality \eqref{discr-endiss-1}
  and its analogue on the  semi-intervals  $\rint{\bftau}k$.  Let us
  just comment on the proof of \eqref{e:consistency}:  the estimates for
  $ \norm{ \pwc U{\bftau}{-} \upwc U{\bftau}}1$ and
  $ \norm{ \pws U{\bftau}{-} \upwc U{\bftau}}1$ derive from the uniform
  integrability of the family $(\pws U{\bftau}')_{\bftau \in \Lambda}$ in
  $ \rmL^1([0,T];\Spx1)$, while we refer to the proof of \cite[Prop.\
  6.3]{MRS2013} for the estimate of
  $ \norm{ \pwM U{\bftau}{-} \upwc U{\bftau}}1$.
\end{proof}

\paragraph{\bf Sketch of the passage to the limit in the energy-dissipation
  inequality \eqref{EDB-discr}.}
By repeating the very same arguments as in the proof of Corollary
\ref{cor:compactness} and relying on \eqref{e:consistency}, we show that for
any sequence $(\bftau_n)_n$ with $|\bftau_n| \downarrow 0$ as $n\to\infty$
there exist a (non-relabeled) subsequence of $(\pwc U{\bftau_n})_n$,
$(\upwc U{\bftau_n})_n$, $(\pws U{\bftau_n})_n$ and $(\pwM U{\bftau_n})_n$ and
a curve $U \in \AC([0,T];\Spx 1)$ such that for all $t\in [0,T]$ there holds
\[
  \pwc U{\bftau_n}(t), \upwc U{\bftau_n}(t), \pwl U{\bftau_n}(t), \pwM
  U{\bftau_n}(t) \rightharpoonup U(t) \qquad \text{in } \Spx 1\,.
\]
Convergences \eqref{cvg-U'-discr} for $(\pwl U{\bftau_n}')_n$ and
\eqref{cvg-repeated-rates-discr} for
$( \tfrac12 \TT^{(j)}_{\bftau} \pwl U{\bftau}' )_n$ (to functions $V_j$ such
that $V_1+V_2=U'$), hold, too. Likewise, we conclude the analogues of
convergences \eqref{cvg-for-the-halves-forces} for the sequences
$( \TT^{(j)}_{\bftau} \pwM \xi{\bftau_n})_n$. Therefore, we are in a position
to take the limit as $n\to\infty$ in \eqref{EDB-discr}.  A straightforward
adaptation of the arguments from Section\ \ref{ss:vel} and \ref{ss:slope} leads
us to conclude that there exists $\xi \in \rmL^1 ([0,T];\Spx 1^*)$ such that
the pair $(U,\xi)$ complies with the energy-dissipation inequality
\eqref{EDB-0T-lim}. Then, we repeat the very same arguments from Section
\ref{ss:4.4} and establish that $(U,\xi)$ in fact fulfills the
energy-dissipation balance \eqref{EDB-effect}, that it solves the
subdifferential inclusion \eqref{solving-the-equn}, and that $(V_1,V_2)$
provide an optimal decomposition of the rate $U'$.

This finishes the proof of Theorem \ref{th:conv-split-step-MM}.  \QED

\section{The time-splitting method for systems with a block structure}
\label{s:block}
In this section we tackle the application of the splitting method to generalized gradient systems with a \emph{block structure}. In such systems, 
\[
  \text{the state variable $u$ is a vector } \binom yz
\in \blockspace: = \Spy \ti \Spz,
\]
with $\Spy$ and $\Spz$ (separable) reflexive Banach spaces.  The evolution of
the system is governed by an energy functional
\[
  \calE: [0,T]\ti \blockspace \to (-\infty, \infty], \qquad \calE = \calE(t,u)=
  \calE(t,y, z)
\]
(we will use both notations with slight abuse), whereas dissipation mechanisms
are encoded by two dissipation potentials $\disblo y$ and $ \disblo z$, each
acting on  one of  the components of the rate vector
$u'= \binom\yvel\zvel$, namely
\[  
\cR(v,w)=\big(\disblo y {\oplus} \disblo z\big)(v,w):= \disblo y
(\yvel)+\disblo z (\zvel) \ \text{ with } 
\disblo y{:}\: \Spy \to [0,\infty) \text{ and } 
  \disblo z{:}\: \Spz \to [0,\infty)\,.
\]

The analysis of these systems can be carried out in the context of the
splitting approach in the  previous case of  Section\ \ref{se:TimeSplit} by
introducing the dissipation potentials $\calR_j: \blockspace \to [0,\infty]$
 via  
\begin{equation}
\label{dissip-potent-Rj}
\begin{aligned}
\calR_1(u') =  \calR_1(\yvel,\zvel)=\disblo y(\yvel) + \mathcal{I}_{\{0\}}(\zvel),  \qquad  \calR_2(u') =  \calR_2(\yvel,\zvel)=\calI_{\{0\}}(\yvel) + \disblo z(\zvel),
\end{aligned}
\end{equation}
where $\calI_{\{0\}}$ is the indicator function of the singleton $\{0\}$ with
$\calI_{\{0\}}(0)=0$ and $\infty$ else. 
Let 
\[
\partial^{\blockspace} \calE : [0,T]\ti \blockspace \rightrightarrows
\blockspace^* \text{ be the Fr\'echet subdifferential of } \calE(t,\cdot)
\]
in  the duality pairing $\pairing{}{\blockspace}{\cdot}{\cdot}$. 
Our time-splitting scheme  will be  based on the
Cauchy problems for the subdifferential inclusions
\[
\partial \calR_j(u'(t)) + \partial^{\blockspace} \calE(t,u(t))  \ni 0 \quad
\text{in } \blockspace^* \qquad \foraa\  t \in (0,T),
\]
in which we either freeze the variable $z$ (for $j=1$), or the variable $y$
(for $j=2$).  It can be easily calculated that, in this setup, the effective
dissipation potential is
\begin{equation}
  \label{effective-block}
  \begin{aligned}
    & \calR_\eff: \blockspace \to [0,\infty), && \calR_\eff(u') = \disblo y
    (\yvel) + \disblo z (\zvel) \qquad \text{with }
    \\
    & \calR_\eff^*: \blockspace^* \to [0,\infty), && \ \calR_\eff^*(\xi) =
    \disblo y^* (\yfor) + \disblo z^* (\zfor).
  \end{aligned}
\end{equation}

\subsection{Assumptions}
\label{ss:8.1}

Our conditions on $\calE$ and on the dissipation potentials $\disblo y$ and
$\disblo z$ mimic the setup of Section\ \ref{s:2}, but with some significant
differences. We start by settling the properties of the energy functional.

\begin{hypothesis}
\label{hyp:E-block}
The functional $\calE: [0,T]\ti \blockspace \to (-\infty,\infty]$ has the
proper domain $\mathrm{dom}(\calE)$ $ = [0,T]\ti \domE_0 $, on which $\calE$ is
bounded from below \eqref{bded-below} and complies with the
time-differentiability condition $\mathbf{<E>}$.  Along all sequences
$ (t_n,u_n) \weakto (t,u) \text{ in } [0,T]\times \blockspace$ with
$ (u_n)_{n\in \N}$ contained in an energy sublevel $ S_E$
\begin{compactitem}
\item[-]
  the lower semicontinuity and power continuity conditions from \eqref{h:2.1} hold;
  \item[-] the closedness condition \eqref{h:2.2} for $\partial^{\blockspace} \calE: [0,T]\ti \blockspace \rightrightarrows \blockspace^*$ holds.
  \end{compactitem}
\end{hypothesis}
The following subsumes our requirements on the dissipation potentials $\disblo y$ and $\disblo z$.
\begin{hypothesis}
\label{hyp:diss-block}
For $\mathrm{x} \in \{\mathrm{y},\mathrm{z}\}$ 
and $\mathbf{X} \in \{\Spy, \Spz\}$
the functionals $\disblo {\mathrm{x}}: \mathbf{X} \to [0,\infty)$ 
  and their conjugates $\disblo {\mathrm{x}}^*: \mathbf{X}^* \to [0,\infty)$ comply with condition $\mathbf{<R>}$ and with the Quantitative Young Estimate \eqref{eq:QuYouEst}. 
\end{hypothesis}
We note that $\cR_\eff =\cR_y+\cR_z$ does not necessarily satisfy the QYE. 
To see  this, we define the norm on the product space $\Spx{}$ by
$\|v\|_{\Spx{}} = \|\yvel\|_\Spy+\|v_z\|_\Spz$. Then
$\|\xi\|_{\Spx{}^*} = \max\{\|\xi_y\|_{\Spy^*},\|\xi_z\|_{\Spz^*}\}$, and we
have
\[
\begin{aligned}
\cR_\eff(v)+\cR_\eff^*(\xi)  & = \cR_y(\yvel)+\cR_z(v_z) +  \cR_y^*(\xi_y)+\cR_z^*(\xi_z)
\\
 & \geq c_\Spy \|\yvel\|_\Spy  \|\xi_y\|_{\Spy^*} + c_\Spz \|v_z\|_\Spz  \|\xi_z\|_{\Spz^*} -c.
\end{aligned}
\]
However, the term above does not bound
$c_{\Spx{}} \{\|\yvel\|_\Spy+\|v_z\|_\Spz\} \cdot
\max\{\|\xi_y\|_{\Spy^*},\|\xi_z\|_{\Spz^*}\} -\tilde c$, in general.

Although Hypotheses \ref{hyp:E-block} and \ref{hyp:diss-block} mimic the setup
of Section\ \ref{se:TimeSplit}, it is clear that the overall gradient system
$(\Spx, \calE, \calR_\eff)$ is different from that considered therein.  The
first, striking difference is that after extending the dissipation potentials
$\disblo y$ and $\disblo z$ to the dissipation potentials $\calR_1$ and
$\calR_2$ on the \emph{common} space $\blockspace= \Spy\ti\Spz$ (cf.\
\eqref{dissip-potent-Rj}), we lose the coercivity of $\calR_1^*= \disblo y^*$
and $\calR_2^*=\disblo z^*$ required via \eqref{gsuperlinear-growth} on
$\blockspace^*=\Spy^*\ti\Spz^* $.  Moreover, we emphasize that, here,
the Fr\'echet subdifferential of $\calE(t,\cdot)$ is considered in the
duality pairing of the common space $\blockspace$.  Nonetheless, in what
follows we are going to show that the techniques at the core of the analysis in
Section\ \ref{s:4} carry over to the present setting. For this, a crucial role
will be played by the condition that $\partial^{\blockspace} \calE$ has a
`cross product structure', which is weaker than the singleton condition needed
in the setup of Section \ref{se:TimeSplit}.

\begin{hypothesis}[Cross-product condition]
\label{h:cross-product condition}
For all $(t,u) = (t,y,z) \in\dom(\partial \cE)$ we have 
\begin{equation}
\label{cross-product-condition}
\partial^{\blockspace}
\calE(t,y,z) =\blosub y (t,y,z) \ti \blosub z (t,y,z)  
\end{equation}
where $\blosub x : [0,T]\ti \blockspace \rightrightarrows \mathbf{X}^*$, for
$\mathrm{x} \in \{\mathrm{y},\mathrm{z}\}$ and correspondingly 
$\mathbf{X} \in \{\Spy, \Spz\}$, is the \emph{partial subdifferential} of
$\calE$ with respect to the variable $ \mathrm{x}$,  while fixing the
other variable. 
\end{hypothesis}
We note that this condition is more general than the singleton condition
because multi-valued subdifferentials are still possible. We also remark for
later use that, in view of \eqref{cross-product-condition}, the closedness
condition for $ \partial^{\blockspace} \calE$ is indeed equivalent to
\begin{equation}
\label{h:2.2-partial}
\begin{aligned}
\forall\, E>0: \ 
 & \left\{
\begin{array}{ll}
\displaystyle
 (t_n,y_n, z_n, \eta_n,\zeta_n ) \weakto (t,y, z, \eta, \zeta)  \text{ in
 } [0,T]\ti \Spy \ti \Spz \ti \Spy^*\ti \Spz^*,  
 \\
   (y_n,z_n) \in S_E, \ \eta_n \in \blosub y (t_n,y_n,z_n), \    \zeta_n \in
   \blosub z (t_n,y_n,z_n) \ \forall\,n \in \N  
   \end{array}
   \right.
 \\  
   &  \  \quad  \Longrightarrow  
\quad \eta \in  \blosub y (t,y,z) \text{ and } \zeta  \in  \blosub z (t,y,z)\,.
\end{aligned}
\end{equation}
Obviously,   in view of \eqref{effective-block} and \eqref{cross-product-condition}
the subdifferential inclusion
\begin{equation}
\label{subdiff-incl-eff-block}
 \partial \cR_\eff (\dot u(t)) +\partial^{\blockspace} \cE(t, u(t)) \ni 0 \quad  \text{ in } \blockspace^* \quad \foraa\  t \in (0,T)
\end{equation}
is indeed  equivalent to the system
\[
\left\{
\begin{array}{lll}
&
\displaystyle  \! \! \! \!   \! \!  \partial \disblo y (\dot y(t)) +  \blosub y
(t,y(t),z(t))  \ni 0  &  \text{ in } \Spy^* 
\\ 
&
\displaystyle \! \! \! \!   \! \!    \partial \disblo z (\dot z(t)) + \, \blosub
z (t,y(t),z(t))  \ni 0  &  \text{ in } \Spz^* 
\end{array}
\right. \quad \foraa\  t \in (0,T)\,.
\]

Last but not least, we need to specify our chain-rule assumption on
$(\calE, \partial^{\blockspace} \calE)$. As we have previously remarked,
Hypothesis \ref{hyp:diss-block} does not guarantee the validity of the QYE for
$\calR_\eff$. Instead of imposing it as an additional assumption, in this setup
we will compensate for the possible lack of this property by strengthening the
chain rule. In fact, the condition
$\int_0^T \|\xi\|_{\blockspace^*} \|u'\|_{\blockspace} \dd t<\infty $ is now
replaced by (the weaker)
$\int_0^T \left( \calR_\eff(u'){+} \calR_\eff^*({-}\xi)\right) \dd t <\infty$.
Hence, Hypothesis \ref{hyp:CR-block} also encodes a `compatibility condition'
between $\calR_\eff$ and $\calE$.

\begin{hypothesis}[Abstract chain rule]
\label{hyp:CR-block}
The quadruple $(\blockspace,\calE, \partial^{\blockspace} \calE, \calR_\eff)$
satisfies the following property: for 
$(u,\xi) \in \AC([0,T];\blockspace) \ti \rmL^1([0,T];\blockspace^*)$ such that
$\xi(t) \in \partial^{\blockspace} \calE(t,u(t))$ for a.a.\  $t\in (0,T)$,
\begin{equation}
\label{first-part-CR-block}
\sup_{t\in [0,T]} |\calE(t,u(t))|<\infty, \text{ and } \int_0^T \left(
  \calR_\eff(u'(t)){+} \calR_\eff^*({-}\xi(t))\right) \dd t  <\infty, \,
\end{equation}
 the function $t\mapsto \calE(t,u(t))$ is absolutely continuous and 
the chain rule \eqref{eq:48strong} holds. 
\end{hypothesis}

\noindent
Clearly, if $\calR_\eff$ satisfies the $\QYE$, then the chain rule property
$\mathbf{<CR>}$ for $(\calE, \partial^{\blockspace} \calE)$ is sufficient for
Hypothesis \ref{hyp:CR-block}. In the general case, a sufficient condition for
the validity of $\mathbf{<CR>}$ is that the chain rule \eqref{eq:48strong}
holds for all pairs $u = (v,z) \in \AC([0,T];\Spy\ti \Spz)$ and
$\xi =(\eta,\zeta) \in \AC([0,T];\Spy^*\ti \Spz^*)$, with
$\eta(t) \in \blosub y (t,y(t),z(t)) $ and
$\zeta(t) \in \blosub z (t,y(t),z(t)) $ for a.a.\ $t\in (0,T)$ (so that
$\xi(t) \in \partial^{\blockspace} \calE(t,u(t))$ by Hypothesis
\ref{h:cross-product condition}), fulfilling the following estimate
\begin{equation}
\label{weaker-estimate}
 \int_0^T \|\eta(t)\|_{\Spy^*} \, \|y'(t)\|_{\Spy} \dd t  +  \int_0^T \|\zeta(t)\|_{\Spz^*} \, \|z'(t)\|_{\Spz} \dd t <\infty\,.
\end{equation}
Obviously, \eqref{weaker-estimate} is weaker than
$ \int_0^T \|\xi(t)\|_{\blockspace^*} \, \|u'(t)\|_{\blockspace} \dd t <\infty$,
and it follows from the estimate $\int_0^T \left( \calR_\eff(u') 
{+} \calR_\eff^*({-}\xi)\right) \dd t <\infty$  if the individual
dissipation potentials $ \disblo y $ and $ \disblo z $ satisfy the QYE.

\subsection{Time-splitting for block structure systems}
\label{ss:8.2}

As in Section \ref{se:TimeSplit} we introduce the rescaled dissipation
potentials $\tdis j: \blockspace \to [0,\infty)$
\[
\widetilde \cR_1(u')= 2\cR_1 \!\left( \tfrac12 u' \right) =
2\disblo y \!\left( \tfrac12 \yvel \right) + \mathcal{I}_{\{0\}}(\zvel),\quad
\widetilde \cR_2(u')=  2\cR_2 \!\left( \tfrac12 u' \right) =2\disblo z \!\left(
  \tfrac12 \zvel \right) + \mathcal{I}_{\{0\}}(\yvel)\,. 
\]
We will construct our approximate solution to the Cauchy problem for
\eqref{subdiff-incl-eff-block} by solving, in suitable sub-intervals $I_j$ of
$[0,T]$, the Cauchy problems for the doubly nonlinear equations
\[
\partial \tdis j(u'(t)) + \partial^{\blockspace}  \calE(t,u(t))  \ni 0 \quad \text{in } \blockspace^*  \quad  \foraa\  t \in I_j, \quad  j \in \{1,2\},
\]
which, thanks to the cross product condition \eqref{cross-product-condition}, reformulate 
in the same way as  the subdifferential inclusion \eqref{subdiff-incl-eff-block} for $\calR_\eff$. 
\par
More precisely, let $\mathscr{P}_{\bftau} $ be a non-uniform partition of
$[0,T]$ (cf.\ \eqref{non-uniform-partition}) and let
$ (\lint{k}{\bftau})_{k=1}^{N_{\bftau}}$ and
$( \rint{k}{\bftau})_{k=1}^{N_{\bftau}}$ be the associated `left' and `right'
 semi-intervals,  see \eqref{left-right-intervals}.  Starting from an
initial datum $u_0 = (y_0,z_0)\in \domE_0$, we define the approximate solutions
 $\pws U\bftau=  (\pws Y\bftau, \pws Z\bftau): [0,T]\to \domE_0 \subset \Spy\ti
 \Spz$  to \eqref{subdiff-incl-eff-block}
in the following way (cf.\ \eqref{def-pws}).  We start with 
\begin{subequations}
  \label{def-pws-block}
  \begin{equation}
    \label{initial-block}
    (\pws Y\bftau(0),\pws Z\bftau(0)) : = (y_0,z_0) 
  \end{equation}
and  proceed   for $ t \in (\dis t \bftau {k-1}, \dis t \bftau k ] $ and 
$k \in \{ 1, \ldots, N_\bftau\}$   as follows: 
\begin{itemize}
\item[-] On the  semi-interval  $ \lint{k}{\bftau}$, we define
  $(\pws Y\bftau, \pws Z\bftau)$ to be a solution of the Cauchy problem:
  \begin{equation}
    \label{left-Cauchy-block}
    \left\{ 
      \begin{array}{cll}
        \displaystyle
        \partial \disblo y(\yvar'(t)) +  \blosub y (t,y(t),z(t))  \ni 0  &  \text{ in }
        \Spy^* & \text{ and } 
        \smallskip
        \\
        \zvar'(t) \equiv 0 & \text{ in } \Spz &\ \foraa\ t \in   \lint{k}{\bftau}\,;
        \smallskip
        \\
        (y(\dis t{\bftau}{k-1}),z(\dis t{\bftau}{k-1})) = (\pws
        Y\bftau(\dis t\bftau{k-1}),  \pws Z\bftau(\dis t\bftau{k-1})) \,. 
        \hspace{-3em} & &  
      \end{array}
    \right.
  \end{equation}
\item[-] On the  semi-interval  $ \rint{k}{\bftau}$, we define
  $(\pws Y\bftau, \pws Z\bftau)$ to be a solution of the Cauchy problem
  \begin{equation}
    \label{right-Cauchy-block}
    \left\{ 
      \begin{array}{cll}
        \yvar'(t) \equiv 0 & \text{ in } \Spy &\text{ and }
        \smallskip
        \\
        \disblo z(\zvar'(t)) +  \blosub z (t,y(t),z(t)) \ni 0  &  \text{ in } \Spz^*
        &  \ \foraa\ t \in   \rint{k}{\bftau} \,;
        \smallskip
        \\
        (y ( \thalf),z ( \thalf) )  = (\pws Y{\bftau}(\thalf),  \pws
        Z{\bftau}(\thalf))  \,. \hspace{-5em}&  &
      \end{array} 
    \right.
  \end{equation}
\end{itemize}
\end{subequations}
Existence of solutions for the Cauchy problems \eqref{left-Cauchy-block}
follows by noting that for fixed $z\in \Spz$ the triple
$(\Spy, \calE(\cdot,\cdot, z), \disblo y)$ satisfies the required assumptions
of Theorem \ref{exist:gflows}. Analogously, the same holds for the triple
triple $(\Spz, \calE(\cdot, y, \cdot), \disblo z)$ if $y\in\Spy$ is fixed,
which provides existence of solutions of the Cauchy problems
\eqref{right-Cauchy-block}.  Hence, we conclude that the solution curve
$\pws U{\bftau} = (\pws Y{\bftau}, \pws Z{\bftau})\colon [0,T] \to \blockspace
$ fulfills $\pws U{\bftau} \in \AC([0,T];\blockspace)$.  We also introduce the
functions $\pws \eta{\bftau} \in \rmL^1([0,T];\Spy^*)$ and
$\pws \zeta{\bftau} \in \rmL^1([0,T];\Spz^*)$ featuring in the force terms in
the subdifferential inclusions \eqref{left-Cauchy-block} and
\eqref{right-Cauchy-block}. Namely,  for $k \in \{ 1, \ldots, N_\bftau\}$
we set  
\begin{equation}
  \label{interpolant-selections}
  \begin{aligned}
    & \pws \eta{\bftau} (t) \left\{
      \begin{array}{cl}
         \in   \blosub y (t,\pws Y{\bftau}(t),\pws Z{\bftau}(t))
        \cap ({-} \partial \disblo y(\pws Y{\bftau}'(t))) & \text{for }   t \in
        \lint{k}{\bftau} \,, 
        \smallskip
        \\
          \equiv 0 & \text{ for }   t \in \rint{k}{\bftau}\,,
      \end{array}
    \right.
    \\[0.4em]
    & \pws \zeta{\bftau} (t) \left\{
      \begin{array}{cl}
        \equiv 0 & \text{for }   t \in \lint{k}{\bftau} \,, 
        \smallskip
        \\
        \in   \blosub z (t,\pws Y{\bftau}(t),\pws Z{\bftau}(t))  \cap ({-}
        \partial \disblo z(\pws Z{\bftau}'(t))) & \text{for }   t \in
        \rint{k}{\bftau}  \,.
      \end{array}
    \right.
  \end{aligned}
\end{equation}
Finally, for tailoring analysis to the block structure context, in addition to
the `overall' repetition operators
$\TT^{(j)}_\bftau : \rmL^1([0,T];\mathscr{U}) \to \rmL^1([0,T];\mathscr{U})$
for $\mathscr{U} \in \{ \blockspace, \blockspace^*\}$ we will resort to the
operators (denoted by the same symbols)
 \begin{subequations}
\label{repetition-ops-block}
\begin{align}
\TT^{(1)}_\bftau &{:}\  \rmL^1([0,T];\mathscr{Y})  \to  \rmL^1([0,T];\mathscr{Y}); \quad
\big(\TT^{(1)}_\bftau g\big) (t) := \left\{ \ba{@{}cl@{}} 
     g(t)& \text{if } t \in \lint{\bftau}{\mt {\bftau}t}\,,
 \\
   g\big(t{-}\tfrac{\wt\tau(t)}2\big)& \text{if }  t\in\rint{\bftau}{\mt {\bftau}t}\,,
 \ea \right.
 \\
 \TT^{(2)}_\bftau &{:}\  \rmL^1([0,T];\mathscr{Z})  \to  \rmL^1([0,T];\mathscr{Z}); \quad
\big(\TT^{(2)}_\bftau g\big) (t) := \left\{ \ba{@{}cl@{}} 
g\big(t{+}\tfrac{\wt\tau(t)}2\big)& \text{if } t\in \lint{\bftau}{\mt {\bftau}t}\,,
 \\
   g(t)& \text{if }  t \in  \rint{\bftau}{\mt {\bftau}t} \,,
 \ea \right.
\end{align}
\end{subequations}
with $\mathscr{Y} \in \{ \Spy, \Spy^*\}$ and
$\mathscr{Z} \in \{ \Spz, \Spz^*\}$.

Now, we are in a position to give our convergence result for the time-splitting
scheme in the setup with block structure.  Observe that, due to the block
structure we will succeed in relating the weak limits of the repeated rates
$ (\TT^{(1)}_{\bftau_n}( \pws Y{\bftau_n}')) $ and
$( \TT^{(2)}_{\bftau_n}( \pws Z{\bftau_n}') )$ to the limiting rates $Y'$ and
$Z'$, cf.\ \eqref{cvg-repeated-rates-81-statement} below.

\begin{theorem}[Convergence of time-splitting method for block systems]
\label{thm:sec8}
Under Hypotheses \ref{hyp:E-block}, \ref{hyp:diss-block}, \ref{h:cross-product
  condition}, and \ref{hyp:CR-block}, starting from an initial datum
$u_0=(y_0,z_0)\in \domE_0 $, define the curves $\pws U{\bftau}$ as in
\eqref{def-pws-block}.

Then, for any sequence $(\bftau_n)_n$ with $\lim_{n\to\infty} |\bftau_n | =0$
there exist a (non-relabeled) subsequence and a curve
$U=(Y,Z) \in \AC([0,T];\blockspace)$ with $U(0)=u_0$ such that the following
convergences hold for the sequences $(\pws Y{\bftau_n})_n$ and
$(\pws Z{\bftau_n})_n$ as $n\to \infty$:
\begin{subequations}
\label{cvgs-Thm81}
\begin{align}
\label{pointwise-cvg-81}
&\pws Y{\bftau_n}(t) \weakto Y(t) \text{  in } \Spy
 \quad  \text{and}\quad  \pws Z{\bftau_n}(t) \weakto Z(t)  \text{  in } \Spz
  \qquad \text{for all } t \in [0,T], 
 \\
 & 
 \label{cvg-U'-81}
  \pws Y{\bftau_n}' \weakto Y'  \text{ in } \rmL^1([0,T]; \Spy) 
    \ \text{ and } \  \pws Z{\bftau_n}' \weakto Z' \text{ in } \rmL^1([0,T]; \Spz),
 \\
 &
   \label{cvg-repeated-rates-81-statement}
  \tfrac12  \TT^{(1)}_{\bftau_n}( \pws Y{\bftau_n}') \weakto Y'  \text{ in }
  \rmL^1([0,T]; \Spy)  \  \text{ and }\ 
  \tfrac12  \TT^{(2)}_{\bftau_n}( \pws Z{\bftau_n}') \weakto Z' \text{ in }
  \rmL^1([0,T]; \Spz)\,, 
\end{align}
\end{subequations}
and there exists a function $\xi =(\eta,\zeta)\in\rmL^1([0,T];\blockspace^*)$
such that the pair $(U,\xi)$ solves the subdifferential system, for a.a.\ $t\in (0,T)$,
 \begin{equation}
 \label{solving-the-equn-81}
 \begin{array}{cccc}
  \partial \disblo y(Y'(t)) + \eta(t) \ni 0  & \text{and} &
  \eta(t) \in  \blosub y (t,Y(t),Z(t))   &  \text{ in } \Spy^* \,,
  \smallskip
 \\
  \partial \disblo z(Z'(t)) + \zeta(t) \ni 0  & \text{and} &
  \zeta(t) \in  \blosub z (t,Y(t),Z(t))   &  \text{ in } \Spz^* \,,
 \end{array}
 \end{equation} 
and  fulfills the energy-dissipation balance
\begin{equation}
\label{EDB-effect-81}
\begin{aligned}
&
\calE(t,Y(t),Z(t))  + \int_s^t\! \big\{ \disblo y (Y'(r)){+}  \disblo z (Z'(r)) 
+   \disblo y^* ({-}\eta(r)){+}  \disblo z^* ({-}\zeta(r)) \big\} \dd r 
\\
&
 = \calE(s,Y(s),Z(s))  + \int_s^t  \!\partial_t \calE(r,Y(r),Z(r)) \dd r  \qquad
 \text{for }\  0 \leq s \leq t \leq T \,.
 \end{aligned}
\end{equation}
\end{theorem}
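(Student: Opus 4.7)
The plan is to mirror the architecture of the proof of Theorem~\ref{th:conv-split-step} in Section~\ref{s:4}, adapting each step to the block structure and to the weaker hypotheses available here. First, on each left semi-interval $\lint{k}{\bftau}$ the curve $\pws U{\bftau}$ solves the $y$-gradient flow while $z$ stays frozen, and symmetrically on $\rint{k}{\bftau}$. Concatenating the corresponding single-dissipation energy--dissipation balances, and using that the indicator terms in \eqref{dissip-potent-Rj} collapse to zero on the frozen component, yields an overall discrete balance of the same form as \eqref{EDB-approx}, with
\[
 \mathcal{D}_\bftau^{\mathrm{rate}}([s,t]) = \int_s^t \!\big(\chi_\bftau\,2\disblo y\!\big(\tfrac12 \pws Y{\bftau}'\big) + (1{-}\chi_\bftau)\,2\disblo z\!\big(\tfrac12 \pws Z{\bftau}'\big)\big)\dd r
\]
and the analogous expression for $\mathcal{D}_\bftau^{\mathrm{slope}}$ involving $\disblo y^*({-}\pws\eta{\bftau})$ and $\disblo z^*({-}\pws\zeta{\bftau})$. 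Power-control combined with Gr\"onwall's lemma gives uniform energy bounds as in Proposition~\ref{prop:aprio}, and the superlinearity of $\disblo y$, $\disblo z$ together with their conjugates provides the uniform integrability required for weak $\rmL^1$-compactness of the rates and of the forces.

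Extracting a subsequence, I would obtain $\pws Y{\bftau_n}(t)\weakto Y(t)$ in $\Spy$ and $\pws Z{\bftau_n}(t)\weakto Z(t)$ in $\Spz$ for every $t$, with $(Y,Z)\in\AC([0,T];\blockspace)$, as well as weak $\rmL^1$ limits of $\pws Y{\bftau_n}'$ and $\pws Z{\bftau_n}'$. A welcome simplification compared with Section~\ref{s:4} is that, by construction, $\pws Y{\bftau}'\equiv 0$ on each $\rint{k}{\bftau}$ and $\pws Z{\bftau}'\equiv 0$ on each $\lint{k}{\bftau}$; consequently $\TT^{(2)}_\bftau \pws Y{\bftau}'\equiv 0$ and $\TT^{(1)}_\bftau \pws Z{\bftau}'\equiv 0$, so item~(4) of Lemma~\ref{lem:repetition-operator} applied separately in $\Spy$ and $\Spz$ produces \eqref{cvg-repeated-rates-81-statement} at once, with both repetitions converging to the same limits $Y'$ and $Z'$. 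Convexity and weak lower semicontinuity of $\disblo y$ and $\disblo z$ then give the liminf estimate for $\mathcal{D}_{\bftau_n}^{\mathrm{rate}}([0,T])$ by $\int_0^T(\disblo y(Y'){+}\disblo z(Z'))\dd r$ without any infimal-convolution argument, reflecting that here $\cR_\eff$ is already additive on rates.

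The core of the argument, and the main obstacle, is the passage to the limit in $\mathcal{D}_{\bftau_n}^{\mathrm{slope}}$, which is where the singleton condition used in Section~\ref{ss:slope} is replaced by the cross-product condition. The families $(\TT^{(1)}_{\bftau_n}\pws\eta{\bftau_n})_n\subset\rmL^1([0,T];\Spy^*)$ and $(\TT^{(2)}_{\bftau_n}\pws\zeta{\bftau_n})_n\subset\rmL^1([0,T];\Spz^*)$ are uniformly integrable by the first step, so they admit (subsequential) weak limits $\eta$ and $\zeta$. To identify them, I would introduce the repeated curves $\xcur{\bftau}1$, $\xcur{\bftau}2$ as in \eqref{u-1}, observe that $\xcur{\bftau_n}j\to U$ in $\rmL^\infty([0,T];\Spyw\times\Spzw)$ by item~(3) of Lemma~\ref{lem:repetition-operator}, and then apply the Young-measure compactness result of \cite[Thm.~3.2]{RossiSavare06} \emph{separately} to each sequence. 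The closedness property \eqref{h:2.2-partial}, equivalent under Hypothesis~\ref{h:cross-product condition} to the closedness of $\partial^{\blockspace}\calE$, forces the supports of the two limiting Young measures into $\blosub y(t,Y(t),Z(t))$ and $\blosub z(t,Y(t),Z(t))$ respectively; since these sets are convex, the barycenter representation gives $\eta(t)\in\blosub y(t,Y(t),Z(t))$ and $\zeta(t)\in\blosub z(t,Y(t),Z(t))$ for a.a.\ $t$, and weak lower semicontinuity of $\disblo y^*$, $\disblo z^*$ produces the liminf estimate for the slope term.

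Passing to the liminf in the discrete energy--dissipation balance over $[0,T]$ using Hypothesis~\ref{hyp:E-block} and the two liminf estimates above, the pair $(U,\xi):=((Y,Z),(\eta,\zeta))$ satisfies the upper energy--dissipation estimate for $(\blockspace,\calE,\cR_\eff)$, and by Hypothesis~\ref{h:cross-product condition} fulfills $\xi(t)\in\partial^{\blockspace}\calE(t,U(t))$ a.e. Since finiteness of the rate and slope integrals secures \eqref{first-part-CR-block}, Hypothesis~\ref{hyp:CR-block} supplies the chain rule, and Proposition~\ref{p:EDP} applied to the generalized gradient system $(\blockspace,\calE,\cR_\eff)$ upgrades the upper estimate to the energy--dissipation balance \eqref{EDB-effect-81} and delivers $-\xi\in\partial\cR_\eff(U')$; the latter, by the decoupled product structure of $\cR_\eff$, is equivalent to the two inclusions in \eqref{solving-the-equn-81}. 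The essential difficulty, already flagged above, is the identification step for $\eta$ and $\zeta$: without the singleton condition they live in different dual spaces and cannot be merged, and the cross-product assumption is precisely the device that converts the identification into two independent closedness arguments, one in $\Spy^*$ and one in $\Spz^*$.
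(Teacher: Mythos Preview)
Your proposal is correct and follows essentially the same route as the paper's proof in Section~\ref{ss:proof-81}: the same approximate energy--dissipation balance, the same observation that freezing one component makes $\TT^{(2)}_\bftau \pws Y{\bftau}'\equiv 0$ and $\TT^{(1)}_\bftau \pws Z{\bftau}'\equiv 0$ so that item~(4) of Lemma~\ref{lem:repetition-operator} identifies the repeated-rate limits directly with $Y'$ and $Z'$, the same Young-measure argument combined with the closedness \eqref{h:2.2-partial} and convexity of the partial subdifferentials to place the barycenters $\eta,\zeta$ in $\blosub y$, $\blosub z$, and the same conclusion via Hypothesis~\ref{hyp:CR-block} and Proposition~\ref{p:EDP}. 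Your emphasis on why the cross-product condition suffices in place of the singleton condition is exactly the point the paper makes.
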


Like for Theorem \ref{th:conv-split-step}, we obtain the enhanced convergences
\begin{subequations}
\label{enh-cvg-81}
\begin{align}
& 
\en t{\pws U{\bftau_n}(t)}  \longrightarrow 
\en t{U(t)} \qquad  \text{for all } t \in [0,T],
\\
&
\left. 
\begin{array}{ll}
  \displaystyle
  \int_s^t \disblo y(\tfrac12  \TT^{(1)}_{\bftau_n} \pws Y{\bftau_n}'(r)) \dd r   \longrightarrow   \int_s^t \disblo y (Y'(r)) \dd r,  
  \smallskip
  \\
  \displaystyle
  \int_s^t \disblo z(\tfrac12  \TT^{(2)}_{\bftau_n} \pws Z{\bftau_n}'(r)) \dd r   \longrightarrow   \int_s^t \disblo z (Z'(r)) \dd r, 
  \smallskip
  \\
  \displaystyle
  \int_s^t   \disblo y^*({-} \TT^{(1)}_{\bftau_n} \pws \eta{\bftau_n}(r)) \dd r  \longrightarrow   \int_s^t \disblo y^* ({-}\eta(r)) \dd r,   \smallskip
  \\
  \displaystyle
  \int_s^t   \disblo z^*({-} \TT^{(2)}_{\bftau_n} \pws \zeta{\bftau_n}(r)) \dd r
  \longrightarrow   \int_s^t \disblo y^* ({-}\zeta(r)) \dd r 
 \end{array}
 \right\} \quad \text{for all }\ [s,t]\subset [0,T].
 \end{align}
\end{subequations}

\begin{remark}[Non-convergence]
  It can be easily checked in our ``multi-valued'' counterexample  of
  Section \ref{ss:3.2}  that (i) the ``cross-product condition'' does not
  hold and (ii) that the solutions of the split-step algorithm  using the
  block structure  of $\Spy\ti\Spz = \R\ti\R$ get stuck completely
  when reaching the diagonal $u_1=u_2$.  Hence, we have non-convergence
  because solutions for the effective problem move along the diagonal until they
  reach $u=0$.
\end{remark}

\subsection{Alternating Minimizing Movements for block structures}
\label{su:AltMinMovBlock}
Last but not least, we point out that the analogue of Theorem
\ref{th:conv-split-step-MM} holds for  systems with block structure,
assuming  Hypotheses \ref{hyp:E-block}, \ref{hyp:diss-block},
\ref{h:cross-product condition}, and \ref{hyp:CR-block}.  Let us briefly
illustrate the  Alternating  Minimizing Movement approach to
block-structured systems. As in Section\ \ref{s:AltMinMov}, we construct
approximate solutions by solving the time incremental minimization schemes for
the subdifferential inclusions \eqref{left-Cauchy-block} and
\eqref{right-Cauchy-block}. This results in the \emph{alternating minimization
scheme} \eqref{BLOCK-min-schemes-i} below.

More precisely, starting from an initial datum $u_0=(y_0,z_0)\in \domE_0 $, we
define the piecewise constant solutions
$\pwc U{\bftau} :[0,T] \to \blockspace$,
$\pwc U{\bftau}=( \pwc Y{\bftau}, \pwc Z{\bftau})$ , by setting
$\pwc Y{\bftau}(0): = y_0$, $\pwc Z{\bftau}(0): = z_0$, and for $t\in (0,T)$
 and $k \in \{1,\ldots, N_\bftau\}$  we define
\begin{subequations}
\label{BLOCK-min-schemes-i}
\begin{align}
\label{BLOCK-min-schemes-1}
\text{for } t \in  \lint{k}{\bftau}\ \:{:}\quad  & 
\pwc Y{\bftau}(t) : = Y_k, \quad \pwc Z{\bftau}(t) : = Z_{k-1}, 
\\
 &\text{with } Y_k  \in \Argmin_{Y \in  \Spy} \left\{
   \tfrac{\tau_k}2 \,\widetilde{\calR}_{\mathrm{y}} \big(  \tfrac2{\tau_k}
     ( Y{-}  Y_{k-1}  ) \big)  +
   \calE(\thalf, Y, Z_{k-1})  \right\}, 
 \nonumber
 \\
 \label{BLOCK-min-schemes-2}
 \text{for } t \in  \rint{k}{\bftau}{:}\quad  &
\pwc Y{\bftau}(t) : = Y_k, \quad \pwc Z{\bftau}(t) : = Z_{k},
\\
& \text{with } Z_k  \in \Argmin_{Z \in  \Spz} \left\{ \tfrac{\tau_k}2
  \,\widetilde{\calR}_{\mathrm{z}} \big(  \tfrac2{\tau_k} ( Z{-} 
      Z_{k-1} ) \big)  + \calE(\dis t
  {\bftau}k, Y_k, Z) \right\}.
 \nonumber
\end{align}
\end{subequations}
We also introduce the `delayed' piecewise constant interpolant
$\upwc U{\bftau} = (\upwc Y{\bftau}, \upwc Z{\bftau})$ via
\eqref{right-continuous-pwc}, and the piecewise linear interpolant
$\pwl U{\bftau}:[0,T]\to \blockspace$ of the discrete solutions by setting
\begin{equation}
\label{pwl-block}
\pwl U{\bftau}(t): = 
(\pwl Y{\bftau},  \upwc Z{\bftau}) \text{ for  } t \in [t_{\bftau}^{k-1},
\thalf] \ \text{ and } \ \pwl U{\bftau}(t): = 
(\pwc Y{\bftau},  \pwl Z{\bftau})  \text{ for  } t \in [\thalf,
t_{\bftau}^{k}] \,.
\end{equation}
where 
\[
\left\{
\begin{array}{cl}
\pwl Y{\bftau}(t) =  \frac{t{-} t_{\bftau}^{k-1}}{\tau_k/2} \:\pwc Y{\bftau}(t)
+ \frac{ \thalf {-}t}{\tau_k/2} \;\upwc Y{\bftau}(t)   &  \text{for } t \in
[t_{\bftau}^{k-1}, \thalf],   \medskip
 \\
 \pwl Z{\bftau}(t) = \frac{t{-} \thalf}{\tau_k/2} \: \pwc Z{\bftau}(t)+  \frac{ t_{\bftau}^{k}{-}t}{\tau_k/2} \:\upwc Z{\bftau}(t)   &  \text{for } t \in [\thalf, t_{\bftau}^{k}].
 \end{array} 
 \right.
\]
Finally, the variational interpolant $\pwM U{\bftau}$ can be defined by
replacing \eqref{def-var-interpolant} by an alternate minimization scheme, in
analogy with \eqref{BLOCK-min-schemes-i}.

After these preparations, we can state the result corresponding to Theorem
\ref{th:conv-split-step-MM}, in the context of the block system from Section\
\ref{ss:8.1}.  We omit its proof  because  it follows  easily by
 adapting the proof of Theorem \ref{th:conv-split-step-MM} to the 
case with  block structure, in the
same way as  as we will tailor the proof of Theorem
\ref{th:conv-split-step} to provide a proof of Theorem \ref{thm:sec8} 
 in the upcoming Section \ref{ss:proof-81}.

\begin{theorem}[ Alternating Minimizing Movements for block systems]
\label{thm:sec8-MM} \mbox{} \hfill
Under \linebreak[3] Hypotheses \ref{hyp:E-block}, \ref{hyp:diss-block}, \ref{h:cross-product
  condition}, and \ref{hyp:CR-block}, starting from an initial datum
$u_0=(y_0,z_0)\in \domE_0 $, define the curves
$\pwc U{\bftau} = (\pwc Y{\bftau}, \pwc Z{\bftau})$ and
$\pwl U{\bftau} = (\pwl Y{\bftau}, \pwl Z{\bftau})$ as in
\eqref{BLOCK-min-schemes-i} and \eqref{pwl-block}.
 
Then, for any sequence $(\bftau_n)_n$ with $\lim_{n\to\infty} |\bftau_n | =0$
there exist a (non-relabeled) subsequence and a curve
$U=(Y,Z) \in \AC([0,T];\blockspace)$ with $U(0)=u_0$ such that the following
convergences hold as $n\to \infty$:
\begin{subequations}
\label{cvgs-Thm81-MM}
\begin{align}
\label{pointwise-cvg-81-MM}
&
\left.
 \begin{array}{@{}cc}
   \pwc Y{\bftau_n}(t),\, \pwl Y{\bftau_n}(t) \weakto Y(t)  &  \text{in } \Spy
   \smallskip
 \\
    \pwc Z{\bftau_n}(t),\, \pwl Z{\bftau_n}(t)  \weakto Z(t) &  \text{in } \Spz
 \end{array}\right\}
 \quad  \text{for all } t \in [0,T], 
 \\
 & 
 \label{cvg-U'-81-MM}
 \pwl Y{\bftau_n}' \weakto Y'    \text{  in } \rmL^1([0,T]; \Spy) \quad
 \text{and} \quad 
    \pwl Z{\bftau_n}' \weakto Z'    \text{  in } \rmL^1([0,T]; \Spz),
 \\
 &
 \label{cvg-repeated-rates-81-statement-MM}
 \tfrac12  \TT^{(1)}_{\bftau_n}( \pwl Y{\bftau_n}') \weakto Y'    \text{  in }
 \rmL^1([0,T]; \Spy) \quad \text{and} \quad 
  \tfrac12  \TT^{(2)}_{\bftau_n}( \pwl Z{\bftau_n}') \weakto Z'    \text{  in } \rmL^1([0,T]; \Spz),
 \end{align}
\end{subequations}
and there exists a function $\xi =(\eta,\zeta)\in\rmL^1([0,T];\blockspace^*)$
such that the pair $(U,\xi)$ solves the subdifferential system
\eqref{solving-the-equn-81} and fulfills the energy-dissipation balance
\eqref{EDB-effect-81}.
\end{theorem}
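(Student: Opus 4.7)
The plan is to combine the variational-interpolant machinery used in the proof of Theorem \ref{th:conv-split-step-MM} with the block-structure adaptations that will be carried out in the proof of Theorem \ref{thm:sec8} (see Section \ref{ss:proof-81}). In particular, the cross-product condition from Hypothesis \ref{h:cross-product condition} will replace the singleton condition, and the chain rule from Hypothesis \ref{hyp:CR-block} will take the place of Hypothesis \ref{h:chain-rule}. The overall strategy mirrors the four-step template: (i) derive a discrete energy-dissipation inequality; (ii) extract a priori estimates and compactness; (iii) pass to the liminf in the rate and slope terms; (iv) apply the energy-dissipation principle combined with the chain rule to upgrade the limiting inequality to a genuine energy-dissipation balance equivalent to the subdifferential system \eqref{solving-the-equn-81}.

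For step (i), on each left semi-interval $\lint k\bftau$ one solves the Yosida-type minimum problem with $\widetilde{\calR}_{\mathrm{y}}$ keeping $Z$ frozen at $Z_{k-1}$, and symmetrically on $\rint k\bftau$ with $\widetilde{\calR}_{\mathrm{z}}$ keeping $Y$ frozen at $Y_k$. Introducing the variational interpolant $\pwM U{\bftau} = (\pwM Y{\bftau}, \pwM Z{\bftau})$ defined analogously to \eqref{def-var-interpolant} (with the frozen partner recorded from the previous step), a measurable selection $\pwM \xi{\bftau} = (\pwM \eta{\bftau}, \pwM \zeta{\bftau})$ exists satisfying the Euler equations, where on $\lint k\bftau$ the component $\pwM \eta{\bftau}(t)$ lies in $\blosub y(t, \pwM Y{\bftau}(t), \upwc Z{\bftau}(t))$ while $\pwM \zeta{\bftau} \equiv 0$, and conversely on $\rint k\bftau$. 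By \cite[Lem.\,6.1]{MRS2013} applied to each single-variable gradient system (for fixed $Z$ or fixed $Y$, each satisfying the conditions of Theorem \ref{exist:gflows}), one obtains a discrete energy-dissipation inequality of the form \eqref{EDB-discr}, with rate and slope terms now expressed via $\disblo y, \disblo z$ and the repetition operators \eqref{repetition-ops-block}.

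For step (ii), the a priori estimates \eqref{aprio-est-discr} and the consistency estimate \eqref{e:consistency} carry over verbatim by the argument of Proposition \ref{pr:AlTMinAprio}. Superlinearity of $\disblo y$ and $\disblo z$ yields uniform integrability of $(\pwl Y{\bftau}')_\bftau$ in $\rmL^1([0,T];\Spy)$, of $(\pwl Z{\bftau}')_\bftau$ in $\rmL^1([0,T];\Spz)$, and of the repeated versions $(\TT^{(1)}_\bftau \pwl Y{\bftau}')_\bftau$, $(\TT^{(2)}_\bftau \pwl Z{\bftau}')_\bftau$ together with the repeated forces $(\TT^{(1)}_\bftau \pwM \eta{\bftau})_\bftau$, $(\TT^{(2)}_\bftau \pwM \zeta{\bftau})_\bftau$. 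Arzelà--Ascoli in the weak topology, together with \eqref{e:consistency}, then yields \eqref{pointwise-cvg-81-MM} for a common pointwise weak limit $U = (Y,Z)$ of $\pwc U{\bftau_n}, \upwc U{\bftau_n}, \pwl U{\bftau_n}, \pwM U{\bftau_n}$, and the weak convergences \eqref{cvg-U'-81-MM}. The identification \eqref{cvg-repeated-rates-81-statement-MM} comes from the block structure: since $\pws Z{\bftau}' = 0$ on $\lint k\bftau$ we have $\tfrac12 \TT^{(1)}_\bftau \pwl Y{\bftau}' = \pwl Y{\bftau}'$ (up to the halving factor compensated by the rescaled dissipation), and analogously for the $\mathrm{z}$-component, so the repeated-rate limits coincide with $Y'$ and $Z'$.

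For step (iii), the liminf for the rate part follows directly from convexity and lower semicontinuity of $\disblo y$ and $\disblo z$ applied componentwise, yielding $\liminf_n \int_0^T \tdis1(\pwl U{\bftau_n}') \geq \int_0^T \disblo y(Y')$ and analogously for the $\mathrm{z}$-block. For the slope part, the Young-measure argument of Section \ref{ss:slope} is replaced by the closedness condition \eqref{h:2.2-partial}: the limits $\eta \in \rmL^1([0,T];\Spy^*)$ and $\zeta \in \rmL^1([0,T];\Spz^*)$ of $(\TT^{(1)}_{\bftau_n} \pwM \eta{\bftau_n})_n$ and $(\TT^{(2)}_{\bftau_n} \pwM \zeta{\bftau_n})_n$, identified via a Young-measure/barycenter argument with supports inside $\blosub y(t, Y(t), Z(t))$ and $\blosub z(t, Y(t), Z(t))$ respectively, give $\xi = (\eta,\zeta) \in \partial^{\blockspace}\calE(\cdot, U)$ by the cross-product condition \eqref{cross-product-condition}; multi-valuedness is now harmless since the barycenter $\eta(t)$ (resp.\ $\zeta(t)$) remains inside the convex set $\blosub y(t,U(t))$ (resp.\ $\blosub z(t,U(t))$). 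Finally, in step (iv), the main obstacle, already accounted for by Hypothesis \ref{hyp:CR-block}, is that $\calR_\eff$ need not satisfy QYE, so from the limiting upper estimate one only controls $\int_0^T(\calR_\eff(U') + \calR_\eff^*(-\xi))\dd t < \infty$ rather than $\int_0^T \|U'\|_{\blockspace}\|\xi\|_{\blockspace^*}\dd t$; nevertheless Hypothesis \ref{hyp:CR-block} is tailored precisely to this bound, and Proposition \ref{p:EDP} then promotes the upper estimate to \eqref{EDB-effect-81} and to the subdifferential system \eqref{solving-the-equn-81}, completing the proof.
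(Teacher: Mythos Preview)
Your proposal is correct and follows essentially the same route the paper indicates: combine the variational-interpolant machinery of Theorem~\ref{th:conv-split-step-MM} with the block-structure adaptations from the proof of Theorem~\ref{thm:sec8} (cross-product condition in place of the singleton condition, Hypothesis~\ref{hyp:CR-block} in place of the QYE-based chain rule). The only minor imprecision is your explanation of \eqref{cvg-repeated-rates-81-statement-MM}: the identification $V=Y'$, $W=Z'$ comes not from $\tfrac12\TT^{(1)}_\bftau\pwl Y{\bftau}'=\pwl Y{\bftau}'$ but from the fact that $\pwl Z{\bftau}'\equiv0$ on left semi-intervals and $\pwl Y{\bftau}'\equiv0$ on right ones, so $\TT^{(1)}_\bftau(\pwl U{\bftau}')=\binom{\TT^{(1)}_\bftau\pwl Y{\bftau}'}{0}$ and $\TT^{(2)}_\bftau(\pwl U{\bftau}')=\binom{0}{\TT^{(2)}_\bftau\pwl Z{\bftau}'}$, whence Lemma~\ref{lem:repetition-operator}(4) gives $\binom{V}{0}+\binom{0}{W}=U'$.
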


\subsection{An application to linearized visco-elasto-plasticity}
\label{su:BlockApplic}
In this section we discuss the applicability of Theorems \ref{thm:sec8} and
\ref{thm:sec8-MM} to a prototypical class of coupled systems, also considered
in \cite[Sec.\,2]{MR21}. These systems include a model combining linearized
viscoelasticity and viscoplasticity, cf.\ Example \ref{ex:sec8} ahead.

Let 
\begin{subequations}
\label{example-S8}
\begin{equation}
\label{Hilbert-S8}
\text{$\Spy$ and $\Spz$ \  be Hilbert spaces}.
\end{equation}
The dissipation potential $\disblo y: \Spy \to [0,\infty)$ is quadratic, while
$\disblo z: \Spz \to [0,\infty)$ consists of a $1$-homogeneous part and of a
contribution with $p$-growth for some $p>1$, namely
\begin{equation}
\label{diss-S8}
\disblo y(\yvel) = \frac12 \langle \mathbb{V}_{\mathrm{y}} \yvel, \yvel\rangle, \qquad \disblo z(\zvel) = \Psi_{1}(\zvel) +  \Psi_{p}(\zvel) 
\end{equation}
where $\mathbb{V}_{\mathrm{y}} : \Spy \to \Spy^*$ is a bounded, linear,
symmetric operator, $ \Psi_{1}: \Spz \to [0,\infty)$ is positively
$1$-homogeneous and $\Psi_{p}(\zvel) = \psi(\|\zvel\|_{\Spz}) $ for some
convex,  increasing  
$ \psi: [0,\infty) \to [0,\infty) $ with $c r^p \lesssim \psi(r) \lesssim C r^p$ 
giving $p$-growth.    The energy
functional is of the form
\begin{equation}
\label{en-S8}
\calE(t,y,z): = \frac12 \pairing{}{\Spy}{\bbA y}{y}  +
\pairing{}{\Spz}{\bbB y}{z} +\frac12
\pairing{}{\Spz}{\bbG z}{z}  -  \pairing{}{\Spy}{f(t)}{y}  -
 \pairing{}{\Spz}{g(t)}{z} ,
\end{equation}
\end{subequations}
where $\bbA : \Spy \to \Spy^* $ and $\bbG: \Spz \to \Spz^*$ are linear,
bounded, and symmetric, $\bbB : \Spy \to \Spz^*$ is linear and bounded 
such that $\binom{\bbA\ \bbB^*}{\bbB\ \bbG}$ is positive definite. Moreover, we
assume that  $(f,g)\in \rmC^1([0,T]; \Spy^*\ti \Spz^*)$ are time-dependent
applied forces.  In this setup, the subdifferential inclusion
\eqref{subdiff-incl-eff-block} translates into the system
\begin{subequations}
\label{prot-van-visc-syst}
\begin{align}
&
\label{prot-van-visc-u}
\qquad \quad \qquad \ \,   \mathbb V_{\mathrm{y}} y' + 
\bbA y+ \bbB^*z &= f(t) && \text{in } \Spy^*
&& \foraa\  t \in (0,T),
\\
&
\label{prot-van-visc-z}
\partial \Psi_1(z')  + \partial \Psi_p(z')+
\bbB y +  \bbG z &=g(t) && \text{in } \Spz^*
&& \foraa\  t \in (0,T). 
\end{align}
\end{subequations}
A concrete model that falls in this class of systems is provided by the
following example.

\begin{example}
\label{ex:sec8} 
\upshape We consider an elastoplastic body in a bounded Lipschitz domain
$\Omega\subset\R^{d}$. Linearized elastoplasticity is described in terms of the
displacement $y:\Omega\to\R^{d}$, with $y\in \Spy = \rmH_{0}^{1}(\Omega)$, and
the symmetric, trace-free plastic strain tensor
$z:\Omega\to\R_{\mathrm{dev}}^{d\times d}:=\left\{
  z\in\R_{\mathrm{sym}}^{d\times d}:\mathrm{tr}(z)=0\right\} .$ Let
$\mathbf{Z}=\rmL^{2}(\Omega,\R_{\mathrm{dev}}^{d\ti d})$. The energy functional
$\cE:[0,T]\ti \Spy \ti \Spz\to\R$ is defined by
\[
\cE(t,y,z)=\int_{\Omega}\left\{   \frac{1}{2}(e(y){-}z):\mathbb{C}(e(y){-}z) 
 +\frac{1}{2}z:\mathbb{H}z\right\} \!\dd x-\langle f(t),y\rangle 
\]
where $e(y)  = \frac12(\nabla u{+}\nabla u^\top)$ is the linearized
symmetric strain tensor,
$\mathbb{C} \in \mathrm{Lin}(\R_\mathrm{sym}^{d\ti d}) $ and
$\mathbb{H}\in \mathrm{Lin}(\R_\mathrm{dev}^{d\ti d})$ are the positive
definite and symmetric elasticity and hardening tensors, respectively, and
$f: [0,T]\to \rmH^{-1}(\Omega;\R^d)$ a time-dependent volume loading.  The
dissipation potentials are
\[
\disblo y(y')=\int_{\Omega}\frac{1}{2}e(y') : \mathbb{D}e(y')\dd x, \qquad
\disblo z(z')=\int_{\Omega}\sigma_{\mathrm{yield}}|z'|+ \tfrac{\varrho}2 |z'|^2  \dd x
\] 
with $\mathbb{D} \in \mathrm{Lin}(\R^{d\times d}_{\mathrm{sym}})$ the positive
definite viscoelasticity tensor, $\sigma_{\mathrm{yield}}>0$ the yield stress
and $\varrho>0$ a positive coefficient. System \eqref{prot-van-visc-syst}
rephrases as
\[
\begin{aligned}
  -\mathrm{div}\big(\bbD e(y') + \bbC (e(y){-}z)\big) \ \ \qquad
  & = f(t) && \text{ in } \Omega \ti (0,T), 
  \\
  \sigma_\mathrm{yield} \mathrm{Sign} (z') + \varrho  z' + \mathrm{dev}\big(\bbC
  (z{-}e(y))\big) + \mathbb{H} z & \ni\ \ 0 && \text{ in } \Omega \ti (0,T).
\end{aligned}
\]
where 
$\mathrm{dev}\,A = A - \frac1d (\mathrm{tr}\!\; A)\,I$ is the deviatoric part of a tensor $A\in \R^{d\times d}$. 
\end{example}
\par
It is very easy to check that  the energy functional $\calE$ from \eqref{en-S8} satisfies Hypotheses \ref{hyp:E-block} and \ref{h:cross-product condition}. 
Likewise, it is immediate to check that the dissipation potentials $\disblo y$ and $\disblo z$ in \eqref{diss-S8}
both 
comply with condition $\mathbf{<R>}$. Furthermore,  each of them  fulfills an  individual  Quantitative Young Estimate thanks to Lemma \ref{l:2.2}. 
It remains to discuss the validity of the chain-rule Hypothesis \ref{hyp:CR-block}. For this, let us consider a  curve $u = (y,z) \in \AC([0,T];\Spy\ti\Spz)$ with 
$\sup_{t \in [0,T]} |\calE(t,y(t)),z(t))|<\infty$
 such that
\eqref{weaker-estimate} holds, i.e.\ 
\[
\int_0^T \| \bbA y(t){+} \bbB^*z(t) {-} f(t)\|_{\Spy^*} \| y'(t)\|_{\Spy} \dd t + \int_0^T \|\bbB y(t) {+}  \bbG z(t) {-}g(t) \|_{\Spz^*} \| z'(t)\|_{\Spy} \dd t <\infty\,.
\]
Now, since $(y,z) \in \rmL^\infty ([0,T];\Spy \ti \Spz)$ we readily infer that
$\bbA y \in \rmL^\infty ([0,T];\Spy^*)$,
$\bbB^* z \in \rmL^\infty ([0,T];\Spy^*)$,
$\bbB y \in \rmL^\infty ([0,T];\Spz^*)$, and
$\bbG z \in \rmL^\infty ([0,T];\Spz^*)$. Therefore, the individual
contributions to $\calE$ from \eqref{en-S8}, evaluated along the curve $u$, are
absolutely continuous, and for them the following chain rules hold:
\[
\left\{
\begin{array}{@{}cll}
\frac{\dd}{\dd t} \left( \tfrac12 \pairing{}{\Spy}{\bbA y(t)}{y(t)} \right) &=  \pairing{}{\Spy}{\bbA y(t)}{y'(t)}, 
\smallskip
\\
\frac{\dd}{\dd t} \left( \pairing{}{\Spz}{\bbB y(t)}{z(t)} \right)  &= \pairing{}{\Spy}{\bbB^* z(t)}{y'(t)} +\pairing{}{\Spz}{\bbB y(t)}{z'(t)},
\smallskip
 \\
\frac{\dd}{\dd t} \left( \tfrac12\pairing{}{\Spz}{\bbG z(t)}{z(t)} \right) &=  \pairing{}{\Spz}{\bbG z(t)}{z'(t)}\,.
\end{array}
\right.
\]
From that we immediately conclude the chain rule from Hypothesis
\ref{hyp:CR-block}.
	
All in all, we have proved that the  block-structured system
$(\Spy \ti \Spz, \calE, \disblo y{\oplus} \disblo z)$  from
\eqref{example-S8} complies with Hypotheses \ref{hyp:E-block},
\ref{hyp:diss-block}, \ref{h:cross-product condition}, and
\ref{hyp:CR-block}. Thus, Theorems \ref{thm:sec8} and \ref{thm:sec8-MM} are
applicable.

\subsection{Proof of Theorem \ref{thm:sec8}}
\label{ss:proof-81}
We split the argument in the following steps.\medskip

\noindent\STEP{1. A priori estimates:}
As in the proof of Theorem \ref{th:conv-split-step}, the starting point for our analysis is the approximate energy-dissipation balance 
\begin{equation}
\label{EDB-approx-81}
\begin{aligned}
& 
\calE(t,\pws Y\bftau(t), \pws Z\bftau(t))    + \mathcal{D}_{\bftau}^{\mathrm{rate}} ([s,t])  + \mathcal{D}_{\bftau}^{\mathrm{slope}} ([s,t])  
\\
 & 
 = \calE(s,\pws Y\bftau(s), \pws Z\bftau(s)) +\int_s^t \partial_t \calE(r,\pws
 Y\bftau(r), \pws Z\bftau(r))  \dd r 
\end{aligned}
\end{equation}
along all subintervals $[s,t]\subset [0,T]$. With $\cR_j$ given by
\eqref{dissip-potent-Rj} and $\tdis j=2\cR_j(\frac12\,\cdot\,)$, 
  the rate and slope terms from \eqref{rate+slope-terms} now read 
\begin{subequations}
\label{rate+slope-terms-81}
\begin{align}
&
\label{rate-term-81}
\begin{aligned}
\mathcal{D}_{\bftau}^{\mathrm{rate}} ([0,T])  & := 
   \int_0^T   
  \left\{
\chi_{\bftau} (r) \,  \tdis\rmy   \left(\tfrac12 \pws Y{\bftau}'(r)\right) + 
 (1{-}\chi_{\bftau}(r) )\,  \tdis\rmz  \left(\tfrac12 
 \pws Z{\bftau}'(r)\right) \right\} \dd r 
\\
 &  \stackrel{\eqref{rephrasing-via-repet-a}}{=} 
  \int_0^T  \left\{  \disblo y(\tfrac12  \TT^{(1)}_{\bftau} \pws
   Y{\bftau}'(r)) +   \disblo z(\tfrac12  \TT^{(2)}_{\bftau}\pws
   Z{\bftau}'(r)) \right\} \dd r, 
\end{aligned}
\\
&
\label{slope-term-81}
\begin{aligned}
\mathcal{D}_{\bftau}^{\mathrm{slope}}  ([0,T]) & : =    \int_0^T 
\left\{ \chi_{\bftau}(r) \, \tdis\rmy^*   ({-} \pws \eta{\bftau} (r))
  + (1{-}\chi_{\bftau}(r) )\,  \tdis\rmz^*  ({-}\pws\zeta{\bftau} (r))
\right\}  \dd r  
\\ & 
\stackrel{\eqref{rephrasing-via-repet-b}}{=}  \int_0^T   \left\{
  \disblo y^*({-} \TT^{(1)}_{\bftau} \pws \eta{\bftau}(r)) +  \disblo
  z^*({-}\TT^{(2)}_{\bftau} \pws \zeta{\bftau} (r))  \right\} \dd r\,.
 \end{aligned}
\end{align}
\end{subequations}
%
Then, we can mimic the arguments from from Proposition \ref{prop:aprio} and
Corollary \ref{cor:new-aprio} and derive the analogues of the a priori
estimates therein.  \medskip
\par\noindent
\STEP{2. Compactness:}
We may prove the analogue of 
Corollary \ref{cor:compactness}. Namely, there exists $U \in \AC([0,T];\blockspace)$,
such that, up to a subsequence, 
\begin{equation}
\label{cvgs-U-81}
 \pws U{\bftau_n}(t) \weakto U(t) \text{ in } \blockspace \text{ for all } t \in [0,T], \qquad \pws U{\bftau_n}' \weakto U' \text{ in }  \rmL^1([0,T]; \blockspace),
\end{equation}
whence
convergences \eqref{pointwise-cvg-81}, \eqref{cvg-U'-81}. Furthermore, there exist $V$ and $Z$ such that
\begin{equation}
  \label{cvg-repeated-rates-81}
   \left\{
     \begin{array}{ll}
       \displaystyle \tfrac12  \TT^{(1)}_{\bftau_n}( \pws Y{\bftau_n}') \weakto
       V  &  \text{  in } \rmL^1([0,T]; \Spy), 
       \medskip
       \\
       \displaystyle  \tfrac12  \TT^{(2)}_{\bftau_n}( \pws Z{\bftau_n}')
       \weakto W  &  \text{  in } \rmL^1([0,T]; \Spz)\,. 
 \end{array}
\right.
\end{equation}
In order to identify $V$ and $W$ we observe that, since the $z$ (respectively,
the $y$) variable is frozen in the Cauchy problem \eqref{left-Cauchy-block}
(\eqref{right-Cauchy-block}, resp.), we have that
\[
  \TT^{(1)}_{\bftau} ( \pws U{\bftau_n}' ) = \left(\begin{array}{cc}
      \TT^{(1)}_{\bftau} ( \pws Y{\bftau_n}') \\ 0 \end{array} \right) \quad
  \text{and} \quad \TT^{(2)}_{\bftau} (\pws U{\bftau_n}' ) =
  \left(\begin{array}{cc}0 \\
      \TT^{(2)}_{\bftau} ( \pws Z{\bftau_n}') \end{array}\right)\,.
\]
In turn, \eqref{cvgs-U-81} and Lemma \ref{lem:repetition-operator} yield that
\[
\frac12 \left( \TT^{(1)}_{\bftau} (\pws U{\bftau_n}'){+}  \TT^{(2)}_{\bftau}
  (\pws U{\bftau_n}')\right)  \weakto U'\,. 
\]
Therefore, by \eqref{cvg-repeated-rates-81} we find that
$\binom{ Y'}{ Z'} =U'= \binom{ V}{ 0} +\binom0W$, whence
\eqref{cvg-repeated-rates-81-statement} holds true.

Finally, there exist $\eta \in \rmL^1([0,T];\Spy^*)$
and  $\zeta \in \rmL^1([0,T];\Spz^*)$ such that, as $n\to \infty$, 
\begin{equation}
\label{cvgs-forces-81}
 \TT^{(1)}_{\bftau_n} \pws \eta{\bftau_n} \weakto \eta \ \text{ in }
 \rmL^1([0,T];\Spy^*) \quad \text{and} \quad 
  \TT^{(2)}_{\bftau_n} \pws \zeta{\bftau_n} \weakto \zeta  \ \text{ in }
  \rmL^1([0,T];\Spz^*). 
\end{equation}
We now adapt the Young measure argument carried out in Section \ref{ss:slope}:  up to a  (non-relabeled) subsequence, 
the sequences $( \TT^{(1)}_{\bftau_n} \pws \eta{\bftau_n} )_n$
 and  $( \TT^{(2)}_{\bftau_n} \pws \zeta{\bftau_n} )_n$ admit two limiting Young measures $(\mu_t)_{t\in (0,T)}$ and $(\nu_t)_{t\in (0,T)}$,
  with $\mu_t \in \mathrm{Prob}(\Spy^*)$
  and $\nu_t \in  \mathrm{Prob}(\Spz^*) $  for a.a.\ $t\in (0,T)$, such that 
 \begin{subequations}
 \label{props-YM-81}
 \begin{equation}
\label{props-YM-1-81}
\begin{aligned}
&\mathrm{supp}(\mu_t) \subset \Ls{\Spy^*}{\{  \TT^{(1)}_{\bftau_n} \pws
  \eta{\bftau_n}(t) \}_n} \text{ and }\\
& \mathrm{supp}(\nu_t) \subset \Ls{\Spz^*}{\{  \TT^{(2)}_{\bftau_n} \pws
  \zeta{\bftau_n}(t) \}_n} \qquad \foraa\  t \in (0,T) ,
\end{aligned}
\end{equation}
(see \eqref{props-YM-1} for the definition of the limsup of sets), and
the weak limits $\eta$ and $\zeta$ read
\begin{equation}
\label{props-YM-2-81}
\eta(t)   = \int_{\Spy^*} \tilde\eta \dd \mu_t(\tilde\eta), \qquad \zeta(t)   =
\int_{\Spz^*}\tilde\zeta \dd \nu_t(\tilde\zeta) \quad \foraa\  t \in (0,T)\,. 
\end{equation}
\end{subequations}
Now, arguing in the same way as in Section \ref{ss:slope} and exploiting the
closedness property \eqref{h:2.2-partial} we find $\foraa\ t \in (0,T)$ that 
\[
  \Ls{\Spy^*}{\{ \TT^{(1)}_{\bftau_n} \pws \eta{\bftau_n}(t) \}_n } \subset
  \blosub y (t,Y(t),Z(t)) \text{ and } \Ls{\Spz^*}{\{ \TT^{(2)}_{\bftau_n} \pws
    \zeta{\bftau_n}(t) \}_n } \subset \blosub z (t,Y(t),Z(t)) \,.
\]
Since the the subdifferentials are convex, we conclude with
\eqref{props-YM-2-81} that
\[
\eta(t)\in \blosub y (t,Y(t),Z(t)) \text{ \ and \ } 
\zeta(t) \in  \blosub z (t,Y(t),Z(t)) \quad \foraa\ t \in (0,T)\,.\medskip
\]
 
\noindent \STEP{3. Limit passage in \eqref{EDB-approx-81}:} We take the limit
as $n\to\infty$ in \eqref{EDB-approx-81} on the interval  $[0,T]$: 
thanks to \eqref{pointwise-cvg-81} we have 
$\liminf_{n\to\infty} \calE(T,\pws Y{\bftau_n}(T), \pws Z{\bftau_n}(T)) \geq
\calE(T,Y(T),Z(T)) $,   while due to
\eqref{cvg-repeated-rates-81-statement} and \eqref{cvgs-forces-81} we have
\begin{align}
  & \nonumber
  \begin{aligned}
    \liminf_{n\to\infty} \mathcal{D}_{\bftau_n}^{\mathrm{rate}}  ([0,T]) &
    \geq \liminf_{n\to\infty} \int_0^T \big\{\disblo y(\tfrac12 \TT^{(1)}_{\bftau_n}
    \pws Y{\bftau_n}'(r)) +  \disblo
    z(\tfrac12 \TT^{(2)}_{\bftau_n}\pws Z{\bftau_n}'(r)) \big\} \dd r
    \\
    & \geq \int_0^T  \big\{\disblo y(Y'(r)) + \disblo z(Z'(r))  \big\} \dd r\,,
\\
  \liminf_{n\to\infty} \mathcal{D}_{\bftau_n}^{\mathrm{slope}}  ([0,T])  
&  \geq   \liminf_{n\to\infty} \int_0^T   \big\{  \disblo y^*({-}
\TT^{(1)}_{\bftau_n} \pws \eta{\bftau_n}(r))  +   \disblo
z^*({-}\TT^{(2)}_{\bftau_n} \pws \zeta{\bftau_n} (r)) \big\}  \dd r  
 \\ & 
 \geq  \int_0^T \big\{ \disblo y^*({-}\eta(r)) + \disblo z^*({-}\zeta(r)) \big\} \dd r\,.
 \end{aligned}
 \end{align}
 Relying on convergences \eqref{pointwise-cvg-81} we likewise take the limit as
 $n\to\infty$ in the power terms of the right-hand side of
 \eqref{EDB-approx-81}. All in all, we conclude that the curve $U=(Y,Z)$ and
 the function $\xi=(\eta,\zeta)$ fulfill the energy-dissipation inequality
\begin{align*}
&
\calE(T,Y(T),Z(T))  + \int_0^T \!\!\big\{ \disblo y (Y'(r)){+}  \disblo z (Z'(r))
  + \disblo y^* ({-}\eta(r)){+}  \disblo z^* ({-}\zeta(r)) \big\} \dd r 
\\
& \leq 
\calE(0,Y(0),Z(0))  + \int_0^T  \partial_t \calE(r,Y(r),Z(r)) \dd r \,.\medskip
\end{align*}

\noindent \STEP{4. Energy-dissipation balance and conclusion of the proof:}
From the above inequality it immediately follows that the pair $(U,\xi)$
complies with condition \eqref{first-part-CR-block}. Hence, by Hypothesis
\ref{hyp:CR-block} we conclude that $t\mapsto \calE(t,U(t))$ is absolutely
continuous and the chain rule \eqref{eq:48strong} holds for
$(U,\xi)$. Therefore, Proposition \ref{p:EDP} allows us to conclude that
$(U,\xi)$ solves the subdifferential inclusion \eqref{subdiff-incl-eff-block}
(which rephrases as \eqref{solving-the-equn-81}), and that it fulfills the
energy-dissipation balance \eqref{EDB-effect-81}  for all subintervals
$[s,t]\subset [0,T]$.  

As in the case of Theorem \ref{cvgs-Thm43}, the enhanced
 convergences  \eqref{enh-cvg-81}  are a 
 by-product of the argument for passing to the limit in 
  \eqref{EDB-approx-81}; again, we refer 
 to the proof of 
\cite[Thm.\,4.4]{MRS2013} or \cite[Thm.\,3.11]{MRS13} for all details.
\par
This finishes the proof  of Theorem \ref{thm:sec8}.  
\QED

\appendix 

\section{More  on the Quantitative Young Estimate}
\label{s:QYE}
We aim to gain further insight into the connections between the QYE for the
dissipation potentials  $\cR_1$ and $\cR_2$,  (for which we will always
assume the validity of condition $\mathbf{<R>}$, cf.\ Hypothesis
\ref{hyp:dis}), and the validity of the same property for $\calR_\eff$. In the
particular case in which the norms $\norm{\cdot}1$ and $\norm{\cdot}2$ are
indeed equivalent (i.e., $\norms{\cdot}2$ controls $\norms{\cdot}1$), we have
the following result.

\begin{lemma}
 Suppose that $\Spx{} := \Spx 1= \Spx 2 $ and that 
\begin{equation}
\label{equivalent-norms}
\exists\, \bbC_{\mathrm{N}},\, \bbC_{\mathrm{N}}^*>0 \ \ \forall\, (v,\xi) \in \Spx{}\ti \Spx{}^* \, : \qquad 
\left\{
\begin{array}{ll}
\norm{v}1 \leq   \bbC_{\mathrm{N}},  \norm{v}2,
\\
\norms{\xi}1 \leq  \bbC_{\mathrm{N}}^*  \norms{\xi}2.
\end{array}
\right.
\end{equation}
Let $\calR_j$ satisfy the $\mathbf{<QYE>}$ \eqref{eq:QuYouEst} with constants
$c_j,C_j>0$. Then, also $\cR_\eff$ satisfies estimate \eqref{eq:QuYouEst}, with
respect to the norms $\norm{\cdot}1$ and $\norms{\cdot}1$, with the constants
$c_\eff = \min\left\{c_1, \frac{c_2}{ \bbC_{\mathrm{N}}^* \bbC_{\mathrm{N}}^*}
\right\} $ and $C_\eff = C_1+C_2$.
\end{lemma}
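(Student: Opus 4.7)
The plan is to exploit the dual characterization $\calR_\eff^* = \calR_1^* + \calR_2^*$ established in Lemma \ref{l:Reff}, apply the individual QYEs to each summand, and then minimize over admissible decompositions in the primal.

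First I would fix arbitrary $(v,\xi) \in \Spx{}\times \Spx{}^*$ and any decomposition $v = v_1 + v_2$. Using the QYE hypothesis for $\calR_1$ (with the norms $\|\cdot\|_1,\|\cdot\|_{1,*}$) I get
\[
  \calR_1(v_1) + \calR_1^*(\xi) \geq c_1 \|v_1\|_1\,\|\xi\|_{1,*} - C_1 .
\]
Applying the QYE for $\calR_2$ (in the norms $\|\cdot\|_2,\|\cdot\|_{2,*}$) and then using \eqref{equivalent-norms} to translate both factors back into $\|\cdot\|_1$ and $\|\cdot\|_{1,*}$ yields
\[
  \calR_2(v_2) + \calR_2^*(\xi) \geq c_2 \|v_2\|_2\,\|\xi\|_{2,*} - C_2
  \geq \frac{c_2}{\bbC_{\mathrm{N}}\bbC_{\mathrm{N}}^*}\, \|v_2\|_1\,\|\xi\|_{1,*} - C_2.
\]

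Next I would add these two inequalities. Setting $c_\eff := \min\{c_1, c_2/(\bbC_{\mathrm{N}}\bbC_{\mathrm{N}}^*)\}$ and using the triangle inequality $\|v_1\|_1 + \|v_2\|_1 \geq \|v\|_1$, I obtain
\[
  \calR_1(v_1) + \calR_2(v_2) + \bigl(\calR_1^*(\xi) + \calR_2^*(\xi)\bigr)
  \geq c_\eff \|v\|_1\,\|\xi\|_{1,*} - (C_1 + C_2).
\]

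Finally, I would observe that the right-hand side is independent of the decomposition and that $\calR_1^*(\xi) + \calR_2^*(\xi) = \calR_\eff^*(\xi)$ by Lemma \ref{l:Reff}. Taking the infimum over all decompositions $v_1 + v_2 = v$ on the left-hand side then gives
\[
  \calR_\eff(v) + \calR_\eff^*(\xi) \geq c_\eff \|v\|_1\,\|\xi\|_{1,*} - C_\eff,
\]
with $C_\eff = C_1 + C_2$, which is the desired QYE. There is no genuine obstacle here: the only subtle point is keeping track of which norm the two QYEs are stated in, which is why the constants $\bbC_{\mathrm{N}}$ and $\bbC_{\mathrm{N}}^*$ enter the bound for the $\calR_2$ term.
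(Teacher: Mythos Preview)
Your proof is correct and follows essentially the same route as the paper's own argument: apply the individual QYEs to $\calR_1(v_1)+\calR_1^*(\xi)$ and $\calR_2(v_2)+\calR_2^*(\xi)$, convert the $\calR_2$ bound into $\|\cdot\|_1,\|\cdot\|_{1,*}$ via \eqref{equivalent-norms}, add, and use the triangle inequality. The only cosmetic difference is that the paper picks an $\eps$-near-optimal decomposition and lets $\eps\to 0$, whereas you work with an arbitrary decomposition and take the infimum at the end; the two formulations are equivalent.
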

\begin{proof}
For any  $v\in \Spx{}$  and any $\eps>0$ there are $v_1,v_2\in \Spx{}$ with
$v_1+v_2=v$ such that $\cR_\eff(v)\geq \cR_1(v_1) +\cR_2(v_2)-\eps$. Combining
$\mathbf{<QYE>}$ for $\calR_j$ with 
\eqref{equivalent-norms} yields
\begin{align*}
\cR_\eff(v)+\cR_\eff^*(\xi)& \geq\cR_1(v_1)+\cR_2(v_2)-\eps +\cR_1^*(\xi)+\cR_2^*(\xi)\\
&\geq  c_1 \norm{v_1}1 \norms{\xi}1 +c_2 \norm{v_2}2 \norms{\xi}2 -C_1 -C_2 -\eps \\
& \geq  c_1 \norm{v_1}1 \norms{\xi}1 +\frac{c_2}{ \bbC_{\mathrm{N}}
  \bbC_{\mathrm{N}}^*}  \norm{v_2}1 \norms{\xi}1 -C_1 -C_2 -\eps \\ 
&\geq  \min\left\{c_1, \frac{c_2}{ \bbC_{\mathrm{N}} \bbC_{\mathrm{N}}^*}
\right\} \left(\norm{v_1}1+\norm{v_2}1  \right) \norms{\xi}1  -C_1 -C_2 -\eps
\\  
&\geq  \min\left\{c_1, \frac{c_2}{ \bbC_{\mathrm{N}} \bbC_{\mathrm{N}}^*}
\right\}  \norm{v}1 \norms{\xi}1-C_1 -C_2-\eps\,. 
\end{align*}
Since $\eps>0$ is arbitrary the claim follows. 
\end{proof}

In the general case in which we only have $\Spx 2\subset \Spx 1$ (densely and)
continuously (cf.\ \eqref{norm-control}),  the next result
provides some growth and coercivity conditions on $\calR_1$ and $\calR_2 $
under which the QYE for $\calR_1$ guarantees that for $\calR_\eff$.

\begin{lemma}
\label{l:QYE-more general}
 Assume there exist positive constants $C_1,\, c_2, \,C_2$ and $p,q\in
(1,\infty)$ with $q<p$ such that
\begin{equation}
\label{overline-growth}
\forall\, v\in \Spx1: \quad \calR_1(v) \leq
C_1\|v\|_1^q+C_1 \text{ and } \calR_2(v) \geq
c_2 \|v\|_1^p -C_2. 
\end{equation}
 If additionally  $\calR_1$ satisfies $\mathbf{<QYE>}$,  then  also
$\calR_\eff$ complies with $\mathbf{<QYE>}$.  
\end{lemma}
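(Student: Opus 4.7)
The plan is to reduce the QYE for $\calR_\eff$ to a case distinction based on an optimal decomposition of $v$. By Lemma~\ref{l:Reff}, for each $v \in \Spx 1$ with $\calR_\eff(v) < \infty$ there exists $(v_1^*,v_2^*) \in \Spx 1 \ti \Spx 2$ with $v = v_1^* + v_2^*$ and $\calR_\eff(v) = \calR_1(v_1^*) + \calR_2(v_2^*)$; moreover $\calR_\eff^*(\xi) = \calR_1^*(\xi) + \calR_2^*(\xi)$ for all $\xi \in \Spx 1^*$. I would then split into the two cases $\norm{v_1^*}{1} \geq \tfrac12 \norm{v}{1}$ and $\norm{v_2^*}{1} \geq \tfrac12 \norm{v}{1}$. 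In the first case, the QYE of $\calR_1$, with constants $c_1,\tilde C_1$, applied to the pair $(v_1^*,\xi)$ gives directly
\begin{equation*}
\calR_\eff(v) + \calR_\eff^*(\xi) \,\geq\, \calR_1(v_1^*) + \calR_1^*(\xi) \,\geq\, c_1 \norm{v_1^*}{1} \norms{\xi}{1} - \tilde C_1 \,\geq\, \tfrac{c_1}{2} \norm{v}{1} \norms{\xi}{1} - \tilde C_1,
\end{equation*}
which is already of the desired form.

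In the second case, the lower bound in \eqref{overline-growth} applied to $v_2^*$ yields $\calR_\eff(v) \geq \calR_2(v_2^*) \geq c_2 \norm{v_2^*}{1}^p - C_2 \geq \tfrac{c_2}{2^p}\norm{v}{1}^p - C_2$. In parallel, dualizing the $q$-growth upper bound of $\calR_1$ in \eqref{overline-growth} produces, with conjugate exponent $q' = q/(q{-}1)$, a lower bound $\calR_1^*(\xi) \geq \tilde c_1 \norms{\xi}{1}^{q'} - \tilde C_1$ for some $\tilde c_1 > 0$; since the hypothesis $q < p$ forces $q' > p'$, the elementary inequality $r^{q'} \geq r^{p'} - 1$ (valid for $r \geq 0$) lets us replace $q'$ by $p'$ up to an additive constant, giving $\calR_\eff^*(\xi) \geq \tilde c_1 \norms{\xi}{1}^{p'} - \hat C_1$. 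A scaled Young inequality with conjugate exponents $(p,p')$,
\begin{equation*}
c\, \norm{v}{1} \norms{\xi}{1} \,\leq\, \tfrac{\lambda^p}{p} \norm{v}{1}^p + \tfrac{c^{p'}}{p'\lambda^{p'}} \norms{\xi}{1}^{p'},
\end{equation*}
then permits one to choose $\lambda > 0$ and subsequently $c > 0$ small enough so that $\lambda^p/p \leq c_2/2^p$ and $c^{p'}/(p'\lambda^{p'}) \leq \tilde c_1$; combining this with the two lower bounds above closes the estimate in this case as well.

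Concatenating the two cases with $c_\eff := \min\{c_1/2,\,c\}$ and $C_\eff$ equal to the largest additive constant appearing yields the QYE for $\calR_\eff$. The main, though fairly routine, step is the dualization producing $\calR_1^*(\xi) \geq \tilde c_1 \norms{\xi}{1}^{q'} - \tilde C_1$; the conceptual point is that the strict ordering $q < p$ makes $q' > p'$, so that the $q$-growth upper bound on $\calR_1$ automatically supplies the $p'$-growth of $\calR_1^*$ needed to balance, via Young's inequality, the $p$-growth of $\calR_\eff$ inherited from $\calR_2$.
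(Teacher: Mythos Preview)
Your argument is correct, but it is organized differently from the paper's. The paper does \emph{not} split into the two cases $\norm{v_1^*}{1}\geq\tfrac12\norm{v}{1}$ versus $\norm{v_2^*}{1}\geq\tfrac12\norm{v}{1}$. Instead it shows that your ``Case~2'' essentially never occurs: from $c_2\norm{v_2^*}{1}^p-C_2\leq\calR_2(v_2^*)\leq\calR_\eff(v)\leq\calR_1(v)\leq C_1\norm{v}{1}^q+C_1$ one gets $\norm{v_2^*}{1}\leq C\norm{v}{1}^{q/p}+C$, hence $\norm{v_1^*}{1}\geq\tfrac12\norm{v}{1}-C_*$ for \emph{all} $v$. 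The QYE for $\calR_1$ then gives $\calR_\eff(v)+\calR_\eff^*(\xi)\geq c_1(\tfrac12\norm{v}{1}-C_*)\norms{\xi}{1}-C$, and the unwanted linear term $-c_1 C_*\norms{\xi}{1}$ is absorbed using exactly the superlinear lower bound $\calR_1^*(\xi)\geq\bar c\norms{\xi}{1}^{q'}-\bar C$ that you also derive by dualizing the upper bound on $\calR_1$.

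So both proofs rely on the same two ingredients (the QYE for $\calR_1$ and the dual lower bound on $\calR_1^*$), but the paper uses the growth gap $q<p$ to force $v_1^*$ to always carry almost all of $\norm{v}{1}$, whereas you treat the alternative $\norm{v_2^*}{1}\geq\tfrac12\norm{v}{1}$ by a separate $p$--$p'$ Young argument. Your route is slightly more symmetric and avoids the absorption step, at the cost of handling a case that the paper's structural observation shows to be vacuous for large $\norm{v}{1}$. Either way the proof is complete.
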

\begin{proof}
 Consider  $v_1,v_2\in \Spx 1$ with $v_1+v_2=v$ and
$\cR_1(v_1)+\cR_2(v_2)= \cR_\eff(v)$. Then, using \eqref{overline-growth}
gives 
\begin{align*}
c_2\norm{v_2}1^p -C_2  &\leq \calR_2(v_2)  \leq   \calR_1(v_1)+ \calR_2(v_2)
= \cR_\eff(v)\\
&\overset{(1)}\leq  \cR_1(v{-}0)+ \calR_2(0)  \leq 
C_1 \norm{v}1^q +C_1,
\end{align*}
where $\overset{(1)}\leq$ uses the definition of $\cR_\eff$ as an
inf-convolution. Thus in the optimal decomposition $v=v_1+v_2$ we have 
$\norm{v_2}1 \leq C \norm{v}1^{q/p} +C$, which implies
\begin{equation}
  \label{eq:v1.versus.v}
  \norm{v_1}1 \geq \norm{v}1 - \norm{v_2}1 \geq \norm{v}1 - C\norm{v}1^{q/p}
  -C\geq \frac12\:\norm{v}1 - C_*,
\end{equation}
where we used $q \lneqq p$. 

With this we derive the lower bound
\begin{align*}
\calR_\eff(v)+\calR_\eff^*(\xi) & = \cR_1(v_1)+\cR_2(v_2) + \cR_1^*(\xi)+
\cR_2^*(\xi) \geq  \cR_1(v_1) + \cR_1^*(\xi) 
\\
&\overset{\text{QYE}}\geq c_1^\text{QYE}\norm{v_1}1 \:\norm{\xi}* - C^\text{QYE} 
\overset{\text{\eqref{eq:v1.versus.v}}}\geq 
 c_1^\text{QYE} \big( \frac12 \norm{v}1{-} C_*\big) \:\norm{\xi}*  - C^\text{QYE} .
\end{align*}

This is almost the desired result, except for the linear term
$-c_1^\text{QYE}C_*\norm{\xi}*$ on the right-hand side. However, since
\eqref{overline-growth} provides an upper bound on $\cR_1$, we have a lower
bound on $\cR_1^*$ and hence of $\cR_\eff^*$, viz.\ 
\[
\cR_\eff^*(\xi)\geq \cR_1^*(\xi) \overset{\text{\eqref{overline-growth}}}\geq  
\ol c \norm{\xi}*^{q^*} -\ol C \geq \eps \norm{\xi}* -C_\eps \quad \text{ for
  all } \eps>0.
\]
Choosing $\eps>0$ sufficiently small, the linear term can be absorbed into the
left-hand side, and the QYE for $\cR_\eff$ on $\Spx1$ is established. 
\end{proof}

\section{ Proof of Lemma \ref{lem:repetition-operator}}
\label{s:appB}
It is sufficient to prove only items (3) and (4) of the statement; for
convenience, we will show them for a sequence $(g_{\bftau_n})_n$, with
$|\bftau_n| \to 0$ as $n\to\infty$.

\paragraph{\textbf{Ad (3):}} To fix ideas, we will show the statement for the
operators $\TT^{(1)}_{\bftau_n}$.  Let $g_{\bftau_n} \to g$ in
$ \rmC^0([0,T]; \Spgxw)$, namely
$\lim_{n\to\infty}\sup_{t\in [0,T]} d_{\mathrm{weak}}(g_{\bftau_n}(t), g(t))
=0$ (where $d_{\mathrm{weak}}$ is the distance inducing the weak topology on a
closed bounded subset of $\Spgx$). In order to show that
$\TT^{(1)}_{\bftau_n} g_{\bftau_n} \to g$ in $ \rmC^0([0,T]; \Spgxw)$, we
observe that
\[
\begin{aligned}
  & \sup_{t \in [0,T]} d_{\mathrm{weak}}(\TT^{(1)}_{\bftau_n} g_{\bftau_n}(t),
  g(t)) = \max \left\{ S_{\mathrm{left},\bftau_n}, S_{\mathrm{right},\bftau_n}
  \right\}
  \\
  & \qquad \text{with } \left\{
    \begin{array}{lll}
      S_{\mathrm{left},\bftau_n} = \sup_{t \in  \cup \lint{k}{\bftau_n}}
      d_{\mathrm{weak}}(g_{\bftau_n}(t), g(t)), 
      \\
      S_{\mathrm{right},\bftau_n} = \sup_{t \in  \cup \rint{k}{\bftau_n}}
      d_{\mathrm{weak}} \!\left(g_{\bftau_n}\big(t{-}\tfrac{\tau_n(t)}2\big), g(t)\right).
\end{array}
\right.
\end{aligned}
\]
Now, we clearly have $S_{\mathrm{left},\bftau_n}  \leq \sup_{t\in [0,T]}
d_{\mathrm{weak}}(g_{\bftau_n}(t), g(t))  \rightarrow 0$. In turn, 
\[
\begin{aligned}
S_{\mathrm{right},\bftau_n}  & =  \sup_{t \in  \cup \lint{k}{\bftau_n}}
d_{\mathrm{weak}} \!\left(g_{\bftau_n}(t), g\big(t{+}\tfrac{\tau_n(t)}2\big)\right) 
\\
&
\leq \sup_{t \in  \cup \lint{k}{\bftau_n}}
d_{\mathrm{weak}}\left(g_{\bftau_n}\left(t\right), g(t)\right) + \sup_{t \in
  \cup \lint{k}{\bftau_n}} d_{\mathrm{weak}}\! \left( g(t),
  g\big(t{+}\tfrac{\tau_n(t)}2\big)\right)  \longrightarrow 0 
\end{aligned}
\]
thanks to the fact that $g \in \rmC^0([0,T]; \Spgxw)$. This concludes the proof
of Claim (3).

\paragraph{\textbf{Ad (4):}} Let now
$ g_{\bftau_n} \weakto g \text{ in } \rmL^1([0,T]; \Spgx)$: we aim to show that
\begin{equation}
\label{aim-ITEM-4}
 \frac12 \left( \TT^{(1)}_{\bftau_n} {+}  \TT^{(2)}_{\bftau_n} \right) (g_{\bftau_n}) \weakto g \quad  \text{ in }  \rmL^1([0,T]; \Spgx)\,.
\end{equation}
In what follows, we will use the short-hand
$ \TT^{(0)}_{\bftau_n}:= \frac12\big(\TT^{(1)}_{\bftau_n} {+}\TT^{(2)}_{\bftau_n} \big)$.

First, we observe that, by a characterization of weak compactness in
$\rmL^1([0,T];\Spgx)$ known as the Dunford-Pettis Theorem (see \cite[\S
IV.2, p.\,101]{Diestel-Uhl} for a version in Bochner spaces), the sequence
$(g_{\bftau_n})_n$ is uniformly integrable, i.e.\ there exists a convex
superlinear function $\Phi: [0,\infty) \to [0,\infty) $ such that
\[
\sup_n \int_0^T \Phi( \| g_{\bftau_n}(r)\|) \dd r <\infty\,.
\]
We will now show that $(\TT^{(0)}_{\bftau_n}g_{\bftau_n})_{n\in \N}$ is also uniformly integrable. Indeed, 
by the convexity of $\Phi$ we have 
\[
\begin{aligned}
\sup_n &\int_0^T \!\!\Phi( \| \TT^{(0)}_{\bftau_n}g_{\bftau_n}(r)\|) \dd r     = \sup_n 
\left( \int_0^T \!\!\tfrac12 \Phi (\|  \TT^{(1)}_{\bftau_n}g_{\bftau_n}(r)\| ) \dd r
 + \int_0^T \!\!\tfrac12 \Phi (\|  \TT^{(2)}_{\bftau_n}g_{\bftau_n}(r)\| ) \dd r
\right) 
\\
 & \stackrel{(1)}{=}  
 \sup_n  \int_0^T \chi_{\bftau_n}^{(1)}(r) \Phi(\| g_{\bftau_n}(r)\|) \dd r +  \sup_n  \int_0^T \chi_{\bftau_n}^{(2)}(r) \Phi(\| g_{\bftau_n}(r)\|) \dd r \,, 
\end{aligned}
\]
where $\overset{(1)}=$ follows from direct calculations, cf.\
\eqref{useful-identities}.  Therefore,
$(\TT^{(0)}_{\bftau_n}g_{\bftau_n})_{n\in \N}$ is uniformly integrable as
well. Hence, again by the Dunford-Pettis criterion, up to a (non-relabeled)
subsequence $(\TT^{(0)}_{\bftau_n}g_{\bftau_n})_{n\in \N}$ weakly converges in
$\rmL^1([0,T];\Spgx)$ to some limit $\widetilde g$. In order to show that
$\widetilde g = g$, it will be sufficient to prove that
\[
 \int_{0}^{T} \langle {\phi},{\TT^{(0)}_{\bftau_n}g_{\bftau_n}{-}g}
 \rangle_{\Spgx}\,\d t \longrightarrow 0 \qquad \text{for all } \phi \in
 \rmC^0([0,T];\Spgx^*). 
 \]
 In fact, since $\TT^{(0)}_{\bftau_n} g\to g$ in $\rmL^1([0,T];\Spgx{})$, it
 will be sufficient to show that
\[
 \int_{0}^{T}\langle
 {\phi},{\TT^{(0)}_{\bftau_n}g_{\bftau_n}{-}\TT^{(0)}_{\bftau_n}g}
 \rangle_{\Spgx}\,\d t \longrightarrow 0 \qquad \text{for all } \phi \in
 \rmC^0([0,T];\Spgx^*). 
\]
With this aim, we compute the adjoint of the operator $\TT^{(0)}_{\bftau}$. 
For $f\in\rmL^1([0,T],\Spx{})$ and $\phi\in\rmC^0([0,T],\Spx{}^*)$ we have 
\begin{align*}
  \int_0^T \langle {\phi},{\TT^{(0)}_\bftau f} \rangle_{\Spgx}  \dd t 
  &=\sum_{k=1}^{N_{\bftau}} \int_{\lint {k}{\bftau}}  \langle
  {\phi},{\TT^{(0)}_\bftau f}  \rangle_{\Spgx}   \dd t  +
  \sum_{k=1}^{N_{\bftau}}  \int_{\rint{k}{\bftau}}  \langle
  {\phi},{\TT^{(0)}_\bftau f}   \rangle_{\Spgx}  \dd t \\ 
  &= \frac 1 2 \Big(  \sum_{k=1}^{N_{\bftau}} \int_{\lint {k}{\bftau}}  \langle
  {\phi},{f}  \rangle_{\Spgx} \dd t + \int_{\lint
    {k}{\bftau}}\langle{\phi},{f(\cdot   + \tau_k /2)}  \rangle_{\Spgx}  \dd t 
  \\
  & \qquad \qquad +
  \int_{\rint{k}{\bftau}} \langle {\phi},{ f(\cdot-{ \tau_k }/2)}
  \rangle_{\Spgx}   \dd t + \int_{\rint{k}{\bftau}}  \langle {\phi},{f}
  \rangle_{\Spgx} \dd t \Big) \\ 
  &= \frac 1 2  \Big( \sum_k \int_{\lint {k}{\bftau}}\langle{\phi},{f}
  \rangle_{\Spgx} \dd t + \int_{\rint {k}{\bftau}} \langle {\phi(\cdot -{
      \tau_k }/2)},{f}  \rangle_{\Spgx} \dd t  
  \\ 
  & \qquad \qquad +  \int_{\lint{k}{\bftau}}  
  \langle { \phi(\cdot+{ \tau_k }/2)},f   \rangle_{\Spgx} \dd t +
  \int_{\rint{k}{\bftau}}   \langle {\phi},f  \rangle_{\Spgx} \dd t \Big) \\ 
  & =\int_0^T  \langle {\TT^{(0)}_\bftau \phi},f  \rangle_{\Spgx} \dd t.
\end{align*}
Hence, for every  $\phi \in \rmC^0([0,T];\Spgx^*)$ we have
\[
  \int_{0}^{T}\langle
  {\phi,}{\TT^{(0)}_{\bftau_n}g_{\bftau_n}{-}\TT^{(0)}_{\bftau_n}g}
  \rangle_{\Spgx}\,\d t = \int_{0}^{T}\langle
  {\TT^{(0)}_{\bftau_n}\phi},{g_{\bftau_n}{-}g} \rangle_{\Spgx} \,\d t
  \longrightarrow 0 \,,
\]
because $\TT^{(0)}_{\bftau_n}\phi\to \phi$ strongly in
$\rmL^\infty([0,T];\Spgx{}^*)$, since $\phi \in \rmC^0([0,T];\Spgx^*)$ and
$g_{\bftau_n} -g\rightharpoonup 0$ weakly in $\rmL^1([0,T];\Spx{})$. This
concludes the proof of item (4) of the statement.  \QED

{\small

\markboth{References}{References}

\bibliographystyle{./alpha_AMs}
\bibliography{split_bib}
}

\end{document}